\documentclass{amsart}
\usepackage{amssymb}
\usepackage{amscd}
\usepackage{verbatim}
\usepackage{epsfig}
\def \novt {\frac{n}{2}}
\def \nsq {\frac{n^2}{4}}
\def \intx {\stackrel{\circ}{X}}
\newcommand\Mand{\text{ and }}
\newcommand\Mat{\text{ at }}

\newcommand\Mwhere{\ \text{where}\ }

\newcommand\Mor{\ \text{or}\ }

\newcommand\Mif{\ \text{if}\ }

\newcommand\Mnear{\ \text{near}\ }

\def \del {\delta}   
\def \sigh {\frac{\sigma}{h}}

\newcommand\Diag{\operatorname{Diag}}
\newcommand\Id{\operatorname{Id}}

\newcommand\RR{\mathbb{R}}
\newcommand\bbR{\mathbb{R}}
\newcommand\bbB{\mathbb{B}}
\newcommand\bbS{\mathbb{S}}
\newcommand\mr{\mathbb{R}}

\newcommand\Cx{\mathbb{C}}
\newcommand\mc{\mathbb{C}}

\newcommand\NN{\mathbb{N}}
\newcommand\CC{\mathbb{C}}
\newcommand\mn{\mathbb{N}}
\newcommand\ms{\mathbb{S}}
\newcommand\Nat{\mathbb{N}}

\newcommand\BB{\mathbb{B}}
\newcommand\Bn{ {\mathbb{B} }^{n+1}}

\newcommand\mb{\mathbb{B}}

\newcommand\im{\operatorname{Im}}
\newcommand\re{\operatorname{Re}}

\newcommand\pa{\partial}
\newcommand\p{\partial}

\newcommand\ff{\operatorname{ff}}

\newcommand\mcv{{\mathcal V}}

\newcommand\mcs{{\mathcal S}}
\newcommand\mck{{\mathcal K}}

\newcommand\cL{{\mathcal L}}

\newcommand\ha{{\frac{1}{2}}}
\newcommand\la{{\lambda}}
\newcommand\eps{{\epsilon}}

\newcommand\oq{{\frac{1}{4}}}

\newcommand\cF{{\mathcal F}}
\newcommand\mcf{{\mathcal F}}

\newcommand\cS{{\mathcal S}}
\def \mcs {\mathcal {S}}
\def \mca {\mathcal {A}}
\newcommand\mcr{{\mathcal R}}
\newcommand\mcl{{\mathcal L}}
\newcommand\CI{{\mathcal C}^{\infty}}

\newcommand\dist{{\mathcal C}^{-\infty}}

\newcommand\dCI{{\dot{\mathcal C}}^{\infty}}

\renewcommand{\Box}{{\square}}

\newcommand\Diffb{\operatorname{Diff}_{\text{b}}}

\newcommand\Fr{{\mathcal F}}

\newcommand\zi{{}^{0}}

\newcommand\zT{\zi T}

\newcommand\bl{{\text b}}

\newcommand\Tb{{}^\bl T}
\newcommand\Tbff{{}^{\bl,\ff} T}
\newcommand\Tph{{}^{\phi}T}
\newcommand\To{{}^0 T^*}
\newcommand\oT{{}^0T}
\newcommand\Tos[1]{{}^0 T^*_{#1}}
\newcommand\bo{{}^0}
\newcommand\boH{{}^0\mathsf{H}}

\newcommand\sH{\mathsf{H}}

\newcommand\Vb{{\mathcal V}_{\bl}}

\newcommand\semi{\hbar}

\newcommand\ep{\epsilon}

\setcounter{secnumdepth}{3}
\newtheorem{lemma}{Lemma}[section]
\newtheorem{prop}[lemma]{Proposition}
\newtheorem{thm}[lemma]{Theorem}

\newtheorem*{thm*}{Theorem}
\newtheorem*{prop*}{Proposition}
\newtheorem*{cor*}{Corollary}
\newtheorem*{conj*}{Conjecture}
\numberwithin{equation}{section}
\theoremstyle{remark}

\newtheorem*{rem*}{Remark}
\theoremstyle{definition}

\newtheorem*{Def*}{Definition}

\setlength{\marginparwidth}{1.12in}

\newcommand\Bo{(\mb^3)^\circ}
\newcommand\Bno{(\mb^n)^\circ}
\newcommand\Bc{\mb^3}
\newcommand\Bnc{\mb^n}
\renewcommand\dist{\mathrm{dist}}

\newcommand\CIo{{\mathcal{C}}^{\infty}_0}

\newcommand\CO{{\mathcal{C}}^{0}}

\newcommand\CmI{{\mathcal{C}}^{-\infty}}
\newcommand\sI{\text{sI}}
\newcommand\bH{\text{bH}}

\newcommand\paperintro%
        {%
         }
\newcommand\paperbody%
        {%
         }
\begin{document}

\title[Semiclassical resolvent estimates]
{Analytic continuation and semiclassical resolvent estimates on
  asymptotically hyperbolic spaces}

\author[Richard Melrose, Ant\^onio S\'a Barreto and
Andr\'as Vasy]{Richard Melrose, Ant\^onio S\'a Barreto and Andr\'as Vasy}
\date{March 17, 2011}
\subjclass[2010]{58G25, 58J40, 35P25}
\thanks{The authors were partially supported by the National Science Foundation under
grants DMS-1005944 (RM), DMS-0901334 (ASB) and DMS-0801226 (AV), a
Chambers Fellowship from Stanford University (AV) and are grateful for the
stimulating environment at the MSRI in Berkeley
where some of this work was written in Autumn 2008.}

\begin{abstract}
In this paper we construct a parametrix for the high-energy
asymptotics of the analytic continuation of the resolvent on a
Riemannian manifold which is a small perturbation of the Poincar\'e
metric on hyperbolic space. As a result, we obtain non-trapping high energy
estimates for this analytic continuation.
\end{abstract}
\maketitle

\paperintro
\section*{Introduction}

Under appropriate conditions, the resolvent of the Laplacian on an
asymptotically hyperbolic space continues analytically through the spectrum
\cite{Mazzeo-Melrose:Meromorphic}.  In this paper we obtain estimates on
the analytic continuation of the resolvent for the Laplacian of a metric
that is a small perturbation of the Poincar\'e metric on hyperbolic
space. In particular we show for these perturbations of the metric, and
allowing in addition a real-valued potential, that there are only a finite
number of poles for the analytic continuation of the resolvent to any
half-plane containing the physical region and that the resolvent satisfies
polynomial bounds on appropriate weighted Sobolev spaces near infinity in
such a strip. This result, for a small strip, is then applied to the
twisted Laplacian which is the stationary part of the d'Alembertian on de
Sitter-Schwarzschild. In a companion paper
\cite{Melrose-SaBarreto-Vasy:Asymptotics} the decay of solutions to the
wave equation on de Sitter-Schwarzschild space is analyzed using these
estimates.

In the main part of this paper constructive semiclassical methods are used
to analyze the resolvent of the Laplacian, and potential perturbations of
it, for a  complete, asymptotically hyperbolic, metric on the interior of the
ball, $\Bn,$
\begin{equation}
\begin{gathered}
g_{\delta}=g_0+\chi_\delta(z)H,\\
g_0=\frac{ 4 dz^2}{(1-|z|^2)^2},\
\chi_\delta(z)=\chi\left(\frac{(1-|z|)}{\delta}\right).
\end{gathered}
\label{SeClRe.14}\end{equation}
Here $g_{0}$ is the standard hyperbolic metric, $H=H(z,dz)$ is a symmetric
2-tensor which is smooth up to the boundary of the ball and $\chi\in
\CI(\bbR),$ has $\chi(s)=1$ if $|s|<\ha,$ $\chi(s)=0$ if $|s|>1.$ The
perturbation here is always the same at the boundary but is cut off closer
and closer to it as $\delta \downarrow0.$ For $\delta >0$ small enough we
show that the analytic continuation of the resolvent of the Laplacian is
smooth, so has no poles, in the intersection of the exterior of a
sufficiently large ball with any strip around the real axis in the
non-physical half-space as an operator between weighted $L^2$ or Sobolev
spaces, and obtain high-energy estimates for this resolvent in this strip.

A special case of these estimates is as follows: Let
$x=\frac{1-|z|}{1+|z|},$ $W\in\CI(\Bn)$ be real-valued and let $R_\delta
(\sigma)=(\Delta_{g_\delta}+x^2W-\sigma^2-n/2^2)^{-1}$ denote the resolvent
of $\Delta_{g_\delta}+x^2W.$ The spectral theorem shows that
$R_\delta(\sigma)$ is well defined as a bounded operator in $L^2(\Bn;dg)$
if $\im \sigma <<0,$ and the results of Mazzeo and the first author show
that it continues meromorphically to the upper half plane.  Here we show
that there exists a strip about the real axis such that if $\delta$ is
small and $a$ and $b$ are suitably chosen $x^a R_\delta(\sigma) x^b $ has
no poles, provided $|\sigma|$ is large, and moreover we obtain a polynomial
bound for the norm of $x^a R_\delta(\sigma) x^b.$ More precisely, with
$H^k_0(\Bn)$ the $L^2$-based Sobolev space of order $k$, so for $k=0$,
$H^0_0(\Bn)=L^2(\Bn;dg):$

\begin{thm}(See Theorem~\ref{resolvent-bounds} for the full statement.)
\label{resolvent-bounds-simple} There exist $\delta_0>0,$  such that if
$0\leq \delta\leq \delta_0,$ then 
$x^a R_{\delta}(\sigma) x^b$ continues holomorphically to the region $\im
\sigma < M,$  $M>0$  $| \sigma| >K(\delta,M),$ provided $ \im\sigma< b,$
and   $a>\im \sigma.$ Moreover, there exists $C>0$ such that
\begin{gather}
\begin{gathered}
|| x^a R_\delta (\sigma) x^b v||_{H^k_0(\Bn)} \leq
C |\sigma|^{-1+\frac{n}{2} +k}  ||v||_{L^2(\Bn)}, \;\ k=0,1,2, \\
|| x^a R_\delta (\sigma) x^b v||_{L^2(\Bn)} \leq
C |\sigma|^{-1+\frac{n}{2} +k}  ||v||_{H_0^{-k}(\Bn)}, \;\ k=0,1,2, \\
\end{gathered} \label{sobolev1-simple}
\end{gather}
\end{thm}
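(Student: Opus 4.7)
The plan is to introduce the semiclassical parameter $h = |\sigma|^{-1}$ and, after rescaling, reduce the claimed estimates to uniform semiclassical estimates for the family
\[
Q_\delta(z,h) = h^2(\Delta_{g_\delta} + x^2 W) - z^2 - h^2 n^2/4, \quad z = h\sigma,
\]
with $z$ in a compact subset of $\CC \setminus \{0\}$ and $|\im z| \leq Mh$. The claimed bound $|\sigma|^{-1+n/2+k}$ is then the natural semiclassical bound $h^{1-n/2-k}$ for the resolvent at energy $|\re z|^2 \sim 1$ on an asymptotically hyperbolic space, and the proof reduces to constructing a semiclassical parametrix for $Q_\delta$ that is holomorphic in $z$ and uniform in $h \in (0,h_0]$ and $\delta \in [0,\delta_0]$.

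I would build this parametrix in a combined semiclassical 0-calculus, living on an appropriately blown-up version of the Mazzeo--Melrose 0-double space. Its Schwartz kernel is polyhomogeneous with prescribed behavior at the left face $\lf$, right face $\rf$, 0-front face $\ff$, a semiclassical face (the total boundary $h=0$), and a semiclassical front face (blowup of the semiclassical diagonal). The construction proceeds in three stages: (i) symbolic inversion of the semiclassical principal symbol at the semiclassical front face, producing a parametrix modulo $h\Psi^{-\infty}$; (ii) propagation along the bicharacteristic flow of $|\xi|^2_{g_\delta} - z^2$, which is non-trapping on $(\Bn, g_0)$ and remains so for $\delta \leq \delta_0$ since the perturbation is supported where $1-|z| < \delta$; (iii) solving indicial equations at $\lf$ and $\rf$ so the parametrix carries the outgoing boundary oscillation $x^{n/2+i\sigma}$. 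The latter has absolute value $x^{-\im\sigma}$, and is the origin of the weight conditions $a > \im\sigma$, $b > \im\sigma$ in the theorem, which are exactly what is needed for $x^a R_\delta(\sigma) x^b$ to act between unweighted $L^2$ spaces.

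The parametrix $G_\delta(\sigma)$ is holomorphic in $\sigma$ by construction and satisfies $P_\delta(\sigma) G_\delta(\sigma) = I + E_\delta(\sigma)$, where $E_\delta$ is compact on the appropriate weighted $L^2$ space and of small operator norm for $h$ and $\delta$ small. The smallness has two sources: the symbolic and propagation stages kill the error to infinite order in $h$ on the interior, while the residual boundary error is controlled by the smallness of $\delta$ (the perturbation is supported in a neighborhood of the boundary of width $\delta$). A Neumann series then yields $R_\delta(\sigma) = G_\delta(\sigma)(I+E_\delta(\sigma))^{-1}$ on the region $\im\sigma < M$, $|\sigma| > K(\delta,M)$, and mapping properties of $G_\delta$ on weighted Sobolev spaces, read off directly from the kernel structure, give the bounds \eqref{sobolev1-simple}; the factor $|\sigma|^{n/2}$ tracks the size of the oscillatory Lagrangian generated by the flow.

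The main obstacle is stage (ii) together with the meshing of the 0-calculus and the semiclassical calculus. One must represent the bicharacteristic-flow Lagrangian as a smooth Legendre submanifold on the blown-up semiclassical 0-double space and obtain uniform control as one approaches $\lf, \rf$ and their intersection with the semiclassical face. The non-physical half-plane, where the boundary oscillation grows like $x^{-\im\sigma}$, makes this control delicate and forces the weights $a, b$ to exceed $\im\sigma$ so that the growing oscillation is absorbed. Non-trapping, preserved under small-$\delta$ perturbations of $g_0$, is essential throughout for the global solvability of the propagation equations.
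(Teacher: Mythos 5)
Your proposal follows essentially the same route as the paper: semiclassical rescaling, a parametrix constructed by a staged elimination of errors on a blown-up semiclassical version of the Mazzeo--Melrose zero double space (diagonal/elliptic step, semiclassical front face, transport along the flow-out Lagrangian, normal and indicial operators at the zero front face and at $\lf, \rf$), the weight conditions $a,b>\im\sigma$ to absorb the boundary oscillation $x^{n/2+i\sigma/h}$, and a Neumann series to pass from the parametrix to $R_\delta$. You also correctly identify the main technical obstacle as the uniform description of the flow-out Lagrangian as one approaches $\lf$, $\rf$ and the semiclassical face; the paper devotes Section~\ref{distance-function} to showing exactly this, namely that the graph of $d\,\dist_\delta$ is a smooth Lagrangian in the b- (or partially-b-) cotangent bundle of $M^2_0$.

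One point, however, is misattributed in a way that would send you down the wrong path if you tried to carry it out. You say the ``residual boundary error is controlled by the smallness of $\delta$.'' In the paper this is not the mechanism: after the four-stage construction, the final error $E$ lies in $\rho_{\mcf}^\infty\rho_{\mcs}^\infty\Psi_{0,\semi}^{-\infty,\infty,\infty,n/2+i\sigma/h}$ and is estimated in Proposition~\ref{l2boundrest-gen} by $C h^N$ for any $N$; it is made small purely by taking $h$ small (i.e.\ $|\sigma|$ large), after an asymptotic (Borel) summation that removes the boundary error to all orders. The smallness of $\delta$ enters only geometrically, in Proposition~\ref{distance}: it guarantees that the perturbed Hamilton vector fields remain transversal to the boundary faces after removing a vanishing factor, so that the flow-out is a global graph over $M^2_0$, there are no conjugate points, and the distance function retains the form \eqref{blowd}. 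If you instead tried to estimate the boundary error by $\delta$ you would not find such a bound; the dependence on $\delta$ surfaces only through the constant $K(\delta,M)$. A smaller inaccuracy: the factor $|\sigma|^{n/2}$ is not the ``natural'' semiclassical bound but an explicit loss, coming from using Schur-type $L^1$ bounds on the kernel rather than exploiting the oscillation $e^{-i\sigma r/h}$ by stationary phase; the paper itself remarks that the expected sharp bound is $O(|\sigma|^{-1+k})$.
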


This estimate is not optimal; the optimal bound is expected to be $O(
|\sigma|^{-1+k}).$ The additional factor of $|\sigma|^{\frac{n}{2}}$ results
from ignoring the oscillatory behavior of the Schwartz kernel of the
resolvent; a stationary phase type argument (which is made delicate by the
intersecting Lagrangians described later) should give an improved
result. However, for our application, which we now describe, the polynomial
loss suffices, as there are similar losses from the trapped geodesics in
compact sets.

As noted above, these results and the underlying estimates can be applied 
to study the wave equation on $1+(n+1)$ dimensional de Sitter-Schwarzschild
space. This model is given by 
\begin{equation}
M=\RR_t\times \intx,  \;\ X=[r_\bH,r_{\sI}]_r\times\ms^{n}_\omega,
\label{model-ndim}
\end{equation}
with the Lorentzian metric
\begin{equation}\label{eq:dS-Sch-metric}
G=\alpha^2\,dt^2-\alpha^{-2}\,dr^2-r^2\,d\omega^2,
\end{equation}
where
$d\omega^2$ is the standard metric on $\ms^{n},$
\begin{equation}\label{eq:alpha-def}
  \alpha=\left(1-\frac{2m}{r}-\frac{\Lambda r^2}{3}\right)^{1/2},
\end{equation}
with $\Lambda$ and $m$ positive constants satisfying
$0<9m^2\Lambda<1,$ and $r_\bH,r_{\sI}$ are the two positive roots of
$\alpha=0.$ 

The d'Alembertian associated to the metric $G$ is
\begin{equation}
\square= \alpha^{-2}(D_t^2-\alpha^2r^{-n}D_r(r^{n}\alpha^2
D_r)-\alpha^2r^{-2} \Delta_\omega).
\label{dal}\end{equation}
An important goal is then to give a precise description of the asymptotic
behavior, in all regions, of space-time, of the solution of the wave
equation, $\Box u=0$, with initial data which are is necessarily compactly
supported. The results given below can be used to attain this goal; see
the companion paper \cite{Melrose-SaBarreto-Vasy:Asymptotics}.

Since we are only interested here in the null space of the d'Alembertian,
the leading factor of $\alpha ^{-2}$ can be dropped. The results above can
be applied to the corresponding stationary
operator, which is a twisted Laplacian
\begin{equation}
\Delta_X=\alpha^2r^{-n}D_r(\alpha^2 r^{n}D_r)
+\alpha^2r^{-2}\Delta_\omega.
\label{modelX}
\end{equation}
In what follows we will sometimes consider $\alpha$ as a boundary defining
function of $X.$ This amounts to a change in the $\CI$ structure of
$X;$ we denote the new manifold by $X_{\ha}.$

The second order elliptic operator $\Delta_X$ in \eqref{modelX} is
self-adjoint, and non-negative, with respect to the measure  
\begin{equation}
\Omega=\alpha^{-2} r^{n} \,d r \,d \omega,\label{measure}
\end{equation}
with $\alpha$ given by \eqref{eq:alpha-def}. So, by the spectral theorem,
the resolvent
\begin{gather}
R(\sigma)=(\Delta_X - \sigma^2 )^{-1} : L^2 ( X ;  \Omega ) 
\longrightarrow  L^2 ( X ;  \Omega ) \label{resolv1}
\end{gather}
is holomorphic for $\im \sigma < 0 .$ In
\cite{Sa-Barreto-Zworski:Distribution} the second author and Zworski, using
methods of Mazzeo and the first author from
\cite{Mazzeo-Melrose:Meromorphic}, Sj\"ostrand and Zworski 
\cite{Sjostrand-Zworski:ComplexScaling} and Zworski \cite{Zworski:LimitSet}
prove that the resolvent family has an analytic continuation. 

\begin{thm}(S\'a Barreto-Zworski, see
  \cite{Sa-Barreto-Zworski:Distribution})\label{mercont} As operators
\begin{gather*}
R ( \sigma ): \CIo( \stackrel{o}{X} )\longrightarrow\CI( \stackrel{o}{X} )
\end{gather*}
the family \eqref{resolv1} has a meromorphic continuation to $\mc$ with
isolated poles of finite rank.  Moreover, there exists $\eps>0$ such that
the only pole of $R(\sigma)$ with $\im \sigma<\eps$ is at $\sigma=0;$ it
has multiplicity one. 
\end{thm}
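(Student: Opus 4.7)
The plan is to put $\Delta_X$ into the framework of asymptotically hyperbolic Laplacians at each of the two boundaries $r=r_\bH$ and $r=r_\sI$, apply the Mazzeo--Melrose meromorphic continuation machinery, and then rule out poles in a small strip via a boundary pairing / unique continuation argument, treating $\sigma=0$ separately.

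First I would pass to the modified smooth structure $X_{\ha}$, where $\alpha$ is taken as a boundary defining function near each of $r_\bH$ and $r_\sI$. Since $\alpha^2$ vanishes simply (in the original $r$-structure) at each horizon, $\alpha$ is a genuine boundary defining function in $X_\ha$, and near each boundary component $\Delta_X$, rewritten in $(\alpha,\omega)$ coordinates, has the form
\begin{equation*}
\Delta_X = (\alpha D_\alpha)^2 + i (n-1) c\, (\alpha D_\alpha) + \alpha^2 Q,
\end{equation*}
modulo lower order 0-differential terms, where $Q$ is smooth in $(\alpha,\omega)$ and tangential, and $c$ is a positive constant coming from the surface gravity at that horizon. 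This is exactly the class of 0-differential operators of Mazzeo--Melrose, with an indicial family whose two roots are $0$ and $n$ (after the conventional Weyl shift of $\sigma^2 + \tfrac{n^2}{4}$; for the present normalization one gets roots $\pm i\sigma$). The next step is to invoke the parametrix construction of \cite{Mazzeo-Melrose:Meromorphic}, which yields a meromorphic inverse modulo compact errors on the relevant 0-weighted Sobolev spaces, and hence the meromorphic continuation of $R(\sigma)$ to all of $\mathbb{C}$, acting between the small space $\CIo(\intx)$ and the large space $\CI(\intx)$, with poles of finite rank.

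Next I would localize the problem near the real axis. Away from $\sigma=0$ on the real line, a pole of $R(\sigma)$ would produce a non-zero $u\in\CI(\intx)$ with $(\Delta_X-\sigma^2)u=0$ whose leading asymptotic at each horizon is of the outgoing indicial type. A boundary pairing computation (integration by parts against $\bar u$ with the cut-off regularization of the hyperbolic cusp-type ends, exactly as in \cite{Mazzeo-Melrose:Meromorphic}) forces the boundary data to vanish, and then the unique continuation theorem for second order elliptic operators, together with the self-adjointness of $\Delta_X$ on $L^2(X;\Omega)$ for real $\sigma\ne 0$, shows $u\equiv 0$. The point $\sigma=0$ is exceptional because the constants are in the kernel of $\Delta_X$; a direct computation of the Laurent expansion of $R(\sigma)$ at $0$, using the indicial family, identifies the residue as the rank-one projection onto constants, giving multiplicity exactly one.

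Finally, to extend from the real axis to a strip $\im\sigma<\epsilon$, I would import the complex scaling technology of Sj\"ostrand--Zworski \cite{Sjostrand-Zworski:ComplexScaling} and the limit-set estimates of Zworski \cite{Zworski:LimitSet}: one deforms $r$ into the complex plane just outside a fixed compact set, turning $\Delta_X-\sigma^2$ into a Fredholm family whose discrete eigenvalues in a small strip coincide with the resonances of $R(\sigma)$. Since there are no resonances on $\mathbb{R}\setminus\{0\}$ by the previous paragraph, and the resonance at $0$ is simple, continuity of the discrete spectrum of the scaled operator under small perturbations of the scaling angle produces an $\epsilon>0$ with no resonances in $\{\im\sigma<\epsilon\}\setminus\{0\}$. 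The main obstacle is this last step: one must verify that the complex scaling can be performed consistently at both the event and cosmological horizons (the geometry near each is of the same qualitative type, but the signs and scales differ), and that the scaled operator is elliptic away from a compact set with the correct spectrum in the physical region; once this is arranged, the Fredholm theory delivers the conclusion.
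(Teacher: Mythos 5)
The Mazzeo--Melrose continuation step and the boundary-pairing/unique-continuation argument for ruling out real poles $\sigma\neq 0$ are both reasonable and in line with the expected structure. However, your final step --- extending to a resonance-free strip $\im\sigma<\eps$ --- has a genuine gap. You argue that since there are no resonances on $\mathbb{R}\setminus\{0\}$, ``continuity of the discrete spectrum of the scaled operator under small perturbations of the scaling angle'' produces a strip with no resonances. This is a local statement: continuity controls resonances near any fixed $\sigma_0\in\mathbb{R}$, but it says nothing about the behavior as $|\re\sigma|\to\infty$. Resonances could in principle accumulate on the real axis at high frequency, and nothing in your argument rules this out. Ruling it out is precisely the hard part of the theorem, and it requires a uniform high-frequency (semiclassical) resolvent estimate, which in turn depends crucially on the geometry of the trapped set. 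You never mention trapping, yet $\Delta_X$ on de Sitter--Schwarzschild has trapped geodesics at the photon sphere $r=3m$; if this trapping were, say, elliptic rather than hyperbolic, the claimed resonance-free strip would be false. The normal hyperbolicity of the trapped set (and the associated spectral gap) is the essential geometric input.

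In the actual proof in \cite{Sa-Barreto-Zworski:Distribution}, this is handled by exploiting the separable structure of $\Delta_X$: decomposing in spherical harmonics reduces the problem to a one-parameter family of one-dimensional semiclassical Schr\"odinger operators on $(r_\bH,r_\sI)$ with a nondegenerate barrier-top potential, whose resonances near the real axis are controlled by barrier-top resonance theory (G\'erard--Sj\"ostrand type), uniformly in the angular momentum parameter; the Mazzeo--Melrose machinery then patches this with the behavior at the ends. Your complex-scaling route is not obviously well adapted to the asymptotically hyperbolic ends anyway (scaling is usually applied to Euclidean/conic ends), and even granting it, the Fredholm property alone gives discreteness of resonances, not the uniform spectral gap. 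So the correct statement to aim for in your last paragraph is a high-energy estimate of the cutoff resolvent of the form $\|\chi R(\sigma)\chi\|\le C|\sigma|^M$ uniformly in a strip, which is exactly the content of Theorem~\ref{bhthm} cited later in the paper, and which is a theorem (Bony--H\"afner) rather than an exercise in continuity.
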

\noindent
Theorem \ref{mercont} was proved for $n+1=3,$ but its proof easily extends
to higher dimensions.

In order to describe the asymptotics of wave propagation precisely on $M$
via $R(\sigma)$ it is necessary to understand the action of $R(\sigma)$ on
weighted Sobolev spaces for $\sigma$ in a strip about the real axis as
$|\re\sigma|\to\infty$. The results of \cite{Mazzeo-Melrose:Meromorphic}
actually show that $R(\sigma)$ is bounded as a map between the weighted
spaces in question; the issue is uniform control of the norm at high energies.  The strategy is to obtain bounds for $R(\sigma)$ for $\re(\sigma)$ large 
in the interior of $X,$  then near the ends $r_{\bH}$ and $r_{\sI},$  and later glue those estimates. In the case $n+1=3,$ we can use the following result of Bony and H\"afner to obtain  bounds for the resolvent in the interior
\begin{thm}(Bony-H\"afner, see
\cite{Bony-Haefner:Decay})\label{bhthm} There exists $\eps>0$
and $M\geq 0$ such that if $|\im\sigma|<\eps$   and $|\re \sigma|>1,$
then for any  $\chi\in C_0^\infty(\stackrel{o}{X})$ there exists $C>0$ such that  if $n=3$ in 
\eqref{model-ndim}, then
\begin{equation}
||\chi R(\sigma) \chi f||_{L^2(X ;\Omega) } \leq C |\sigma|^M||f||_{L^2(X;\Omega)}.
\label{bonyhafnerest}\end{equation}
\end{thm}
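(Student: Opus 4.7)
The plan is to recast the estimate semiclassically, writing $h=1/|\re\sigma|$ and $\sigma=h^{-1}z$ with $|z|$ close to $1$ and $\im z=O(h)$, so that $h^2(\Delta_X-\sigma^2)$ becomes a semiclassical operator $P(z,h)$ whose semiclassical principal symbol is $p=|\xi|^2_g-z^2$ in the interior of $X$. The desired bound $\|\chi R(\sigma)\chi f\|\leq C|\sigma|^M\|f\|$ in a strip $|\im\sigma|<\epsilon$ is then equivalent to a polynomial bound $\|\chi P(z,h)^{-1}\chi\|\leq Ch^{-N}$ for $\im z=O(h)$.

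First I would deal with the ends $r=r_\bH$ and $r=r_\sI$, which are not compact in the original coordinates but behave, after a suitable change of variable (involving $\alpha$), like \emph{event horizons} with tangential red/blue-shifted dynamics. The natural device is complex scaling in $r$ near the two horizons, which produces a modified operator $P_\theta(z,h)$ whose symbol is elliptic outside a large compact set, so its resolvent can be controlled there by standard semiclassical elliptic estimates. Since $\chi$ is compactly supported in $\intx$, the difference between $\chi R(\sigma)\chi$ and $\chi(P_\theta-z^2/h^2)^{-1}\chi$ contributes only to the meromorphic structure but not to the large-$|\sigma|$ asymptotics.

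Next I would analyze the classical dynamics of $p$ on $\{p=0\}$. For Schwarzschild--de~Sitter the trapped set $K$ is the single photon sphere at $r=3m$ lifted to cotangent directions tangent to $\ms^n$, and a direct computation of the linearization of the Hamilton flow at $K$ shows that the trapping is normally hyperbolic, with transverse stable/unstable manifolds of equal dimension and a nondegenerate hyperbolic rate. Every other bicharacteristic in the energy shell escapes to one of the two horizons in both forward and backward time. Consequently, away from any fixed neighborhood of $K$, propagation of semiclassical singularities from the complex-scaled regions at $r_\bH,r_\sI$ (where $P_\theta-z^2/h^2$ is elliptic) yields $O(h^{-1})$ bounds on the corresponding cutoff resolvent.

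The main obstacle, and the source of the polynomial loss $|\sigma|^M$, is the microlocal estimate near $K$: one must show $\|B(P_\theta-z^2/h^2)^{-1}B\|\leq Ch^{-N}$ for $B$ microlocally supported in a small neighborhood of $K$ and $|\im z|\leq ch$. For this I would invoke the normally hyperbolic trapping machinery (in the spirit of Wunsch--Zworski and G\'erard--Sj\"ostrand), which constructs a microlocal Grushin problem at $K$ using the stable/unstable foliations and yields a polynomially bounded inverse. Gluing this bound to the non-trapping estimates via a microlocal partition of unity and the resolvent identity produces the global polynomial bound \eqref{bonyhafnerest}. The delicate step throughout is the uniform treatment of the complex absorption near the horizons together with the trapped-set estimate in the same strip $|\im\sigma|<\epsilon$, since the width $\epsilon$ is dictated precisely by the minimal transverse expansion rate at $K$.
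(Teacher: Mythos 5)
This theorem is \emph{quoted} in the paper (attributed to Bony and H\"afner, \cite{Bony-Haefner:Decay}) and is not reproved there, so there is no ``paper's own proof'' to compare against; the paper uses it as a black box in Sections~\ref{sec:decomposition}--\ref{sec:black-hole-proof} (and, in general dimension, replaces it by the Wunsch--Zworski estimate \eqref{wuzwestimate} combined with Datchev--Vasy gluing in Section~\ref{sec:black-hole-n+1-proof}). Your sketch does capture the correct semiclassical skeleton of such a proof: rescale $h=1/|\re\sigma|$, recognize the photon sphere $r=3m$ as a normally hyperbolic trapped set, invoke a trapping estimate of Wunsch--Zworski / G\'erard--Sj\"ostrand type there, and combine with non-trapping propagation elsewhere.

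However, two steps as written would not go through. First, ``complex scaling in $r$'' is not available, since $r$ ranges over the \emph{bounded} interval $[r_\bH,r_\sI]$; the horizons sit at finite $r$. One must pass to a coordinate (the Regge--Wheeler tortoise coordinate $r_*$ with $dr_*=\alpha^{-2}\,dr$, or the rescaled boundary defining function of \eqref{rescale}) in which the horizons are at infinity, and in that coordinate the ends of $\Delta_X$ are asymptotically hyperbolic, not Euclidean. Plain Sj\"ostrand--Zworski complex scaling does not apply; one needs either a hyperbolic-end deformation (as in Zworski's \cite{Zworski:LimitSet}), a Mazzeo--Melrose type analysis as in the present paper, or an explicit complex absorbing potential (which is what the paper does with $\Upsilon$ in \eqref{operatorp1}). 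Second, the assertion that the difference between $\chi R(\sigma)\chi$ and the cutoff resolvent of the modified operator ``contributes only to the meromorphic structure but not to the large-$|\sigma|$ asymptotics'' is exactly the nontrivial point. The error terms coming from the gluing do not automatically stay polynomially bounded in $h^{-1}$; making this quantitative is precisely what requires the Bruneau--Petkov resolvent identity \eqref{brpkid} (or the Datchev--Vasy gluing), which is the technical core of the paper's use of this theorem. Without such a gluing argument your sketch has a genuine gap at this step.
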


This result is not known in higher dimensions (though the methods of
Bony and H\"afner would work even then),
and to prove our main theorem we use the results of Datchev and the third author \cite{Datchev-Vasy}  and Wunsch and Zworski \cite{Wunsch-Zworski} to handle the general case.  The advantage of the method of \cite{Datchev-Vasy} is that one does not need to obtain a bound for the exact resolvent in the interior and we may work with the approximate model of \cite{Wunsch-Zworski} instead.  We decompose the manifold $X$ in two parts
\begin{gather}
\begin{gathered}
X= X_0 \cup X_1, \text{ where  } \\
X_0=[r_{\bH}, r_\bH+4\del) \times \ms^n \cup (r_{\sI}-4\del, r_\sI] \times \ms^n   \text{ and } \\
X_1=( r_\bH+\del, r_{\sI}-\del) \times \ms^n.
\end{gathered}\label{Xdecomp}
\end{gather}
If $\del$ is small enough and if $\gamma(t)$ is an integral curve of the Hamiltonian of $\Delta_X$  then
(see  Section \ref{sec:black-hole} and either \cite{Datchev-Vasy} or \cite{Joshi-SaBarreto})
\begin{gather}
\text{ if } x(\gamma(t))<4\delta \text{ and } \frac{d x(t)}{dt}=0 \Rightarrow \frac{d^2 x(t)}{dt^2}<0.\label{bich-convexity1}
\end{gather}

We consider the operator $\Delta_X$ restricted to $X_1,$ and place it
into the setting of \cite{Wunsch-Zworski} as follows.
Let $X_1'$ be another Riemannian manifold extending $\tilde X_1=( r_\bH+\del/2,
r_{\sI}-\del/2, r_\sI) \times \ms^n$ (and thus $X_1$)
and which is Euclidean outside some compact set, and let
$\Delta_{X'_1}$ be a self-adjoint
operator extending $\Delta_X$ with principal symbol given by the
metric on $X'_1$ which is equal to the
Euclidean Laplacian on the ends. Let
\begin{gather}
P_1=h^2\Delta_{X_1'}-i\Upsilon, \;\ h \in (0,1) \label{operatorp1}
\end{gather}
where $\Upsilon\in \CI(X_1';[0,1])$ is such that $\Upsilon=0$ on $X_1$ and $\Upsilon=1$
outside $\tilde X_1$. Thus, $P_1-1$ is semiclassically elliptic on a
neighborhood of $X_1'\setminus\tilde X_1$. (In particular, this
implies that no bicharacteristic of $P_1-1$ leaves $X_1$ and returns
later, i.e.\ $X_1$ is bicharacteristically convex in $X_1'$,
since this is true for $X_1$ inside $\tilde X_1$, and $P_1-1$ is
elliptic outside $\tilde X_1$, hence has no characteristic set there.)
By Theorem 1 of \cite{Wunsch-Zworski} there exist positive constants $c,$ $C$ and $\eps$ independent of $h$ such that
\begin{gather}
||(P_1-\sigma^2)^{-1}||_{L^2\rightarrow L^2} \leq  Ch^{-N}, \;\ \sigma
\in (1-c,1+c) \times (-c,\eps h)\subset\Cx. \label{wuzwestimate}
\end{gather}

Due to the fact that $0<9m^2\Lambda<1,$ the function $\beta(r)=\ha
\frac{d}{dr}\alpha^2(r)$ satisfies $\beta(r_{\bH})>0$ and
$\beta(r_{\sI})<0.$ Set $\beta_{\bH}=\beta(r_{\bH})$ and
$\beta_{\sI}=\beta(r_{\sI}).$ The weight function we consider,
$\tilde\alpha\in\CO([r_{\bH},r_{\sI}]),$ is positive in the interior and
satisfies

\begin{equation}
\tilde{\alpha}(r)= \begin{cases}
\alpha ^{1/\beta _{\bH}}&\Mnear r_{\bH}\\
\alpha ^{1/|\beta _{\sI}|}&\Mnear r_{\sI}
\end{cases}
\label{SeClRe.1}\end{equation}
We will prove

\begin{thm}\label{globalest}
If $n=2,$ let $\eps>0$ be such that  \eqref{bonyhafnerest} holds.     
     In general, assume that $\delta$ is such that \eqref{bich-convexity1} is satisfied, and let $\eps>0$ be such that \eqref{wuzwestimate}  holds.  If $$0<\gamma<\min(\eps,\beta_{\bH},|\beta_{\sI}|,1),
$$
then for $b>\gamma$ there exist $C$ and $M$ such that if
$\im\sigma\le\gamma$ and $|\re\sigma|\ge1,$ 
\begin{equation}
||{\tilde{\alpha}}^b R(\sigma){\tilde{\alpha}}^bf||
_{L^2 ( X ;\Omega)}\leq C|\sigma|^M ||f||_{L^2 ( X ;\Omega) },
\label{mainest1}
\end{equation}
where $\Omega$ is defined in \eqref{measure}.
\end{thm}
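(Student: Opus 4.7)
The strategy is to follow the gluing method of Datchev--Vasy \cite{Datchev-Vasy}: combine end parametrices coming from Theorem \ref{resolvent-bounds-simple} near $r=r_\bH$ and $r=r_\sI$ with the interior estimate \eqref{bonyhafnerest} (when $n=2$) or \eqref{wuzwestimate} (in general) on $X_1$, producing a global parametrix whose error is small enough in the weighted $L^2$ space to be inverted by a Neumann series with polynomial bounds. The estimate \eqref{mainest1} then follows.

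\textbf{The ends.} Near $r=r_\bH$, after passing to the smooth structure $X_\ha$ in which $\tilde\alpha$ is a boundary defining function, a direct computation from \eqref{modelX} shows that $\Delta_X$ equals the Laplacian of a metric of the form $g_\delta$ in \eqref{SeClRe.14}, plus a potential of the form $\tilde\alpha^2W$ with $W\in\CI$; the perturbation tensor $H$ is determined by the angular part and the higher Taylor data of $\alpha^2$ at $r_\bH$. Once the truncation parameter $\delta$ is chosen so small that the resulting $H$ satisfies $\delta\le\delta_0$, Theorem \ref{resolvent-bounds-simple} provides a resolvent $R_\bH(\sigma)$ on a collar of $r=r_\bH$, holomorphic in the strip $\im\sigma\le\gamma$ for $|\sigma|\ge K(\delta,\gamma)$ and satisfying the bounds \eqref{sobolev1-simple}; the hypothesis $b>\gamma$ in Theorem \ref{globalest} is exactly the condition $b>\im\sigma$ needed in Theorem \ref{resolvent-bounds-simple}. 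The same construction at $r=r_\sI$ yields $R_\sI(\sigma)$.

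\textbf{Gluing.} Pick cutoffs $\chi_\bH,\chi_\sI,\chi_1\in\CI(X)$ with $\chi_\bH+\chi_\sI+\chi_1\equiv 1$, $\chi_\bH,\chi_\sI$ supported in the two caps of $X_0$, and $\chi_1$ supported in $X_1$, together with companions $\psi_\bullet$ identically $1$ on $\supp\chi_\bullet$ and with slightly enlarged supports still contained in the appropriate region. Form
\[
F(\sigma)=\psi_\bH R_\bH(\sigma)\chi_\bH+\psi_\sI R_\sI(\sigma)\chi_\sI+\psi_1 R_1(\sigma)\chi_1,
\]
where $R_1(\sigma)$ is the interior resolvent built from \eqref{bonyhafnerest} or \eqref{wuzwestimate} (after the semiclassical rescaling $h=|\sigma|^{-1}$). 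Since each $R_\bullet$ is a genuine resolvent of an operator agreeing with $\Delta_X-\sigma^2$ on $\supp\psi_\bullet$ (using $\Upsilon\equiv 0$ on $X_1\supset\supp\psi_1$), one obtains $(\Delta_X-\sigma^2)F(\sigma)=\id+E(\sigma)$ with
\[
E(\sigma)=\sum_{\bullet\in\{\bH,\sI,1\}}[\Delta_X,\psi_\bullet]R_\bullet(\sigma)\chi_\bullet.
\]
The commutators are supported in the overlap region $X_0\cap X_1$, which is compactly contained in the interior of $X$ and where $\tilde\alpha$ is bounded above and below, so the weights $\tilde\alpha^{\pm b}$ are harmless there.

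\textbf{The main obstacle.} The hard step is to show that $\tilde\alpha^{-b}E(\sigma)\tilde\alpha^{b}$ is small enough in operator norm — modulo a polynomially bounded finite rank remainder — for $\id+\tilde\alpha^{-b}E(\sigma)\tilde\alpha^b$ to be invertible with polynomial bounds; then $R(\sigma)=F(\sigma)(\id+E(\sigma))^{-1}$ and the estimate \eqref{mainest1} follows by composing the polynomial bounds on $F(\sigma)$ coming from Theorem \ref{resolvent-bounds-simple} and from \eqref{bonyhafnerest}/\eqref{wuzwestimate} with that on the Neumann series. The smallness of the gluing error is the microlocal heart of the matter: each summand in $E(\sigma)$ sandwiches a resolvent between two spatially separated cutoffs, so only bicharacteristics crossing the overlap region contribute, and by the convexity \eqref{bich-convexity1} any such bicharacteristic entering $X_1$ propagates into the complex absorbing set of $P_1-1$ before it can return to the support of the outer commutator. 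Propagation of singularities for $R_1(\sigma)$ then provides an $O(|\sigma|^{-\infty})$ gain that dominates the polynomial factor $|\sigma|^M$ from each individual resolvent, which is exactly the mechanism of \cite{Datchev-Vasy}. The remaining work is bookkeeping of weights and Sobolev orders using the explicit structure of $R_\bH(\sigma)$ and $R_\sI(\sigma)$ provided by the end parametrix of Theorem \ref{resolvent-bounds-simple}.
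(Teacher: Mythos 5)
Your proposal follows the Datchev--Vasy gluing method, which is the paper's approach for general dimension (Section~\ref{sec:black-hole-n+1-proof}); the paper's proof for $n+1=3$ (Sections~\ref{sec:decomposition}--\ref{sec:black-hole-proof}) instead uses the Bruneau--Petkov \emph{exact} resolvent identity \eqref{brpkid}, which expresses $R_\alpha(\sigma)$ algebraically in terms of the cutoff resolvent $\chi R_\alpha \chi$ and the model resolvents, with no parametrix error to invert; there the interior input is the estimate \eqref{bonyhafnerest} for the actual cutoff resolvent, rather than for a modified model with complex absorption. So the two routes are genuinely different, and the paper presents both; your route corresponds to the second and its advantage (as the paper remarks) is that one never needs to know anything about the exact cutoff resolvent.

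However, the microlocal heart of your argument, as written, has a genuine gap. You claim that each summand $[\Delta_X,\psi_\bullet]R_\bullet(\sigma)\chi_\bullet$ of $E(\sigma)$ is $O(|\sigma|^{-\infty})$ by a single propagation-of-singularities argument, because the outer commutator and the inner cutoff have disjoint supports and trajectories must pass through the absorbing region. This is not correct. Take the term $[\Delta_X,\psi_1]R_1\chi_1$: the commutator is supported where $\psi_1$ transitions, which is still inside $\tilde X_1$ (hence where $\Upsilon\equiv 0$), and $\chi_1$ is supported deeper inside $X_1$; bicharacteristics of $\Delta_X-\sigma^2$ leaving $\supp\chi_1$ reach $\supp[\Delta_X,\psi_1]$ \emph{before} entering the absorbing set, so propagation of singularities does not give any gain in $h$ for this term. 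The same is true for the end terms, whose model resolvents $R_\bH,R_\sI$ have no absorbing potential at all. Consequently $\|\tilde\alpha^{-b}E(\sigma)\tilde\alpha^b\|$ is at best polynomially bounded, and the Neumann series for $(\id+E(\sigma))^{-1}$ does not converge. The Datchev--Vasy mechanism is more subtle: one iterates the parametrix, and the gain appears only in \emph{composite} error terms, where a bicharacteristic must cross from one gluing collar, through the interior model region, to the other gluing collar; the convexity condition \eqref{bich-convexity1} then forces such trajectories either to escape through the hyperbolic ends or to hit the absorbing region, and a semiclassical propagation estimate applied to these composites (not to $E$ itself) gives the required $O(h^\infty)$ gain. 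The paper sidesteps this by invoking Theorem~2.1 of \cite{Datchev-Vasy} as a black box and deducing \eqref{eq:davaestimate} directly; if you want to spell out the mechanism, the iterated parametrix structure is the step that cannot be omitted.

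There are also some smaller imprecisions in the reduction to the end models: it is $\alpha^{n/2}\Delta_X\alpha^{-n/2}$, not $\Delta_X$ itself, that takes the form $\Delta_{g_\bH}+x^2W_\bH-\beta_\bH^2 n^2/4$ near $r_\bH$ (Proposition~\ref{modelnearends}), after which one must rescale $\sigma\mapsto\sigma/|\beta_\bH|$ as in \eqref{bulletid} to land in the setting of Theorem~\ref{resolvent-bounds}; the final weight $\tilde\alpha$ (rather than $\alpha$) appears precisely because of this rescaling. These are bookkeeping matters, but they should be made explicit since they are where the thresholds $\beta_\bH,|\beta_\sI|$ in the hypothesis on $\gamma$ enter.
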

\noindent

 This result can be refined by allowing the power of the weight, on either
side, to approach $\im\sigma$ at the expense of an additional logarithmic term,
see Theorem~\ref{globalest-strong}.

Two proofs of Theorem \ref{globalest} are given below. The first, which is
somewhat simpler but valid only for $n+1=3,$ is given in sections \ref{sec:decomposition} and \ref{sec:black-hole-proof}.  It uses techniques of Bruneau
and Petkov \cite{Bruneau-Petkov:Semiclassical} to glue the resolvent
estimates from Theorem~\ref{resolvent-bounds-simple} and the 
localized estimate  \eqref{bonyhafnerest}.
The second proof, valid in general dimension, uses the estimate \eqref{wuzwestimate} and the semiclassical
resolvent gluing techniques of Datchev and the third author
\cite{Datchev-Vasy}.  This is carried out in section
\ref{sec:black-hole-n+1-proof}.

Related weighted $L^2$ estimates for the resolvent on asymptotically
hyperbolic and asymptotically Euclidean spaces have been proved by Cardoso
and Vodev in \cite{Cardoso-Vodev:Uniform}.  However, such estimates,
combined with Theorem \ref{bhthm}, only give the holomorphic continuation
of the resolvent, as an operator acting on weighted $L^2$ spaces, for $|\re
\sigma|>1,$ to a region below a curve which converges polynomially to the
real axis. These weaker estimates do suffice to establish the asymptotic
behavior of solutions of the wave equation modulo rapidly decaying terms
(rather than exponentially decaying) and would give a different proof of
the result of Dafermos and Rodnianski \cite{Dafermos-Rodnianski:de-Sitter}.

In the case of a non-trapping asymptotically hyperbolic manifold which has
constant sectional curvature near the boundary, it was shown by Guillarmou
in \cite{Guillarmou:Absence} that there exists a strip about the real axis,
excluding a neighborhood of the origin, which is free of resonances.  In
the case studied here, the sectional curvature of the metric associated to
$\Delta_X$ is not constant near the boundary, and there exist trapped
trajectories.  However, see \cite{Sa-Barreto-Zworski:Distribution}, all
trapped trajectories of the Hamilton flow of $\Delta_X$ are hyperbolic, and
the projection of the trapped set onto the base space is contained in the
sphere $r=3m,$ which is known as the ergo-sphere. Since the effects of the
trapped trajectories are included in the estimates of Bony and H\"afner,
constructing the analytic continuation of the resolvent of a twisted
Laplacian that has the correct asymptotic behavior at infinity, uniformly
at high energies, allows one to obtain the desired estimates on weighted
Sobolev spaces via pasting techniques introduced by Bruneau and Petkov
\cite{Bruneau-Petkov:Semiclassical}. The main technical result is thus
Theorem~\ref{resolvent-bounds-simple}, and its strengthening,
Theorem~\ref{resolvent-bounds}.

Theorem~\ref{resolvent-bounds} is proved by the construction of
a high-energy parametrix for $\Delta_{g_\delta}+x^2W$. More precisely,
as customary, the problem is translated to the construction of a semiclassical
parametrix for
\begin{equation*}
P(h,\sigma)= h^2\Delta_g+h^2x^2 W-h^2\frac{n^2}{4}-\sigma^2
=h^2\left(\Delta_g+x^2 W-\frac{n^2}{4}-\left(\frac{\sigma}{h}\right)^2\right).
\end{equation*}
where now $\sigma\in (1-c,1+c)\times (-Ch,Ch)\subset\Cx$, $c,C>0$,
and $h\in (0,1)$, $h\to 0$, so the actual spectral parameter is
$$
\frac{n^2}{4}+\left(\frac{\sigma}{h}\right)^2,
$$
and $\im \frac{\sigma}{h}$ is bounded.
Note that for $\im\sigma<0$,
\begin{equation}\label{scresolvent}
R(h,\sigma)=P(h,\sigma)^{-1}:L^2(\Bn)\to H^2_0(\Bn)
\end{equation}
is meromorphic by the results of Mazzeo and the first author
\cite{Mazzeo-Melrose:Meromorphic}.
Moreover, while $\sigma$ is
not real, its imaginary part is $O(h)$ in the semiclassical sense, and is thus
not part of the semiclassical principal symbol of the operator.

The construction proceeds on the semiclassical resolution $M_{0,h}$ of the
product of the double space $\Bn\times_0\Bn$ introduced in
\cite{Mazzeo-Melrose:Meromorphic}, and the interval $[0,1)_h$ -- this space
  is described in detail in
  Section~\ref{semiclassical-double-space}. Recall that, for fixed $h>0$,
  the results of \cite{Mazzeo-Melrose:Meromorphic} show that the Schwartz
  kernel of $P(h,\sigma)^{-1}$, defined for $\im\sigma<0$, is well-behaved
  (polyhomogeneous conormal) on $\Bn\times_0\Bn$, and it extends
  meromorphically across $\im\sigma=0$ with a similarly polyhomogeneous
  conormal Schwartz kernel. Thus, the space we are considering is a very
  natural one. The semiclassical resolution is already needed away from all
  boundaries; it consists of blowing up the diagonal at $h=0$. Note that
  $P(h,\sigma)$ is a semiclassical differential operator which is elliptic
  in the usual sense, but its semiclassical principal symbol
  $g-(\re\sigma)^2$ is not elliptic (here $g$ is the dual metric
  function). Ignoring the boundaries for a moment, when a semiclassical
  differential operator is elliptic in both senses, it has a parametrix in
  the small semiclassical calculus, i.e.\ one which vanishes to infinite
  order at $h=0$ off the semiclassical front face (i.e.\ away from the
  diagonal in $\Bn\times_0\Bn\times\{0\}$).  However, as $P(h,\sigma)$ is
  not elliptic semiclassically, semiclassical singularities (lack of decay
  as $h\to 0$) flow out of the semiclassical front face.

It is useful to consider the flow in terms of Lagrangian geometry.  Thus,
the small calculus of order $-\infty$ semiclassical pseudodifferential
operators consists of operators whose Schwartz kernels are
semiclassical-conormal to the diagonal at $h=0$. As $P(h,\sigma)$ is not
semiclassically elliptic (but is elliptic in the usual sense, so it behaves
as a semiclassical pseudodifferential operator of order $-\infty$ for our
purposes), in order to construct a parametrix for $P(h,\sigma)$, we need to
follow the flow out of semiclassical singularities from the conormal bundle
of the diagonal. For $P(h,\sigma)$ as above, the resulting Lagrangian
manifold is induced by the geodesic flow, and is in particular, up to a
constant factor, the graph of the differential of the distance function on
the product space. Thus, it is necessary to analyze the geodesic flow and
the distance function; here the presence of boundaries is the main issue.
As we show in Section~\ref{distance-function}, the geodesic flow is
well-behaved on $\Bn\times_0\Bn$ as a Lagrangian manifold of the
appropriate cotangent bundle. Further, for $\delta>0$ small (this is where
the smallness of the metric perturbation enters), its projection to the
base is a diffeomorphism, which implies that the distance function is also
well-behaved.  This last step is based upon the precise description of the
geodesic flow and the distance function on hyperbolic space, see
Section~\ref{distance-function}.

In Section~\ref{full-sc-parametrix}, we then construct the parametrix by
first solving away the diagonal singularity; this is the usual elliptic
parametrix construction. Next, we solve away the small calculus error in
Taylor series at the semiclassical front face, and then propagate the
solution along the flow-out by solving transport equations. This is an
analogue of the intersecting Lagrangian construction of the first author
and Uhlmann \cite{Melrose-Uhlmann:Intersection}, see also the work of
Hassell and Wunsch \cite{Hassell-Wunsch:Semiclassical} in the semiclassical
setting.  So far in this discussion the boundaries of $M_{0,\semi}$ arising
from the boundaries of $\Bn\times_0\Bn$ have been ignored; these enter into
the steps so far only in that it is necessary to ensure that the
construction is uniform (in a strong, smooth, sense) up to these
boundaries, which the semiclassical front face as well as the lift of
$\{h=0\}$ meet transversally, and only in the zero front face, i.e.\ at the
front face of the blow-up of $\Bn\times\Bn$ that created $\Bn\times_0\Bn$.
Next we need to analyze the asymptotics of the solutions of the transport
equations at the left and right boundary faces of $\Bn\times_0\Bn$; this is
facilitated by our analysis of the flowout Lagrangian (up to these boundary
faces). At this point we obtain a parametrix whose error is smoothing and
is $O(h^\infty)$, but does not, as yet, have any decay at the zero front
face. The last step, which is completely analogous to the construction of
Mazzeo and the first author, removes this error.

As a warm-up to this analysis, in Section~\ref{3D-parametrix} we present a
three dimensional version of this construction, with worse, but still
sufficiently well-behaved error terms. This is made possible by a
coincidence, namely that in $\RR^3$ the Schwartz kernel of the resolvent of
the Laplacian at energy $(\lambda-i0)^2$ is a constant multiple of
$e^{-i\lambda r} r^{-1}$, and $r^{-1}$ is a homogeneous function on
$\RR^3$, which enables one to blow-down the semiclassical front face at
least to leading order. Thus, the first steps of the construction are
simplified, though the really interesting parts, concerning the asymptotic
behavior at the left and right boundaries along the Lagrangian, are
unchanged. We encourage the reader to read this section first as it is more
explicit and accessible than the treatment of arbitrary dimensions.

In Section~\ref{sec:L2-bounds}, we obtain weighted $L^2$-bounds for the
parametrix and its error. In Section~\ref{sec:resolvent-bounds} we used these
to prove
Theorem~\ref{resolvent-bounds-simple} and
Theorem~\ref{resolvent-bounds}.

In Section~\ref{sec:black-hole} we describe in detail the
de Sitter-Schwarzschild set-up. Then in
Section~\ref{sec:decomposition}, in dimension $3+1$,
we describe the approach of Bruneau and Petkov \cite{Bruneau-Petkov:Semiclassical} 
reducing the
necessary problem to the combination of analysis on the ends,
i.e.\ Theorem~\ref{resolvent-bounds}, and of the cutoff resolvent,
i.e.\ Theorem~\ref{bhthm}. Then, in Section~\ref{sec:black-hole-proof},
we use this method to prove Theorem~\ref{globalest}, and its strengthening,
Theorem~\ref{globalest-strong}. Finally, in
Section~\ref{sec:black-hole-n+1-proof} we give a different proof which
works in general dimension, and does not require knowledge of
estimates for the exact cutoff resolvent. Instead, it uses the results of Wunsch and Zworski
\cite{Wunsch-Zworski} for normally hyperbolic trapping in the presence
of Euclidean ends and of Datchev
and the third author \cite{Datchev-Vasy} which provide a method to combine these with our
estimates on hyperbolic ends. Since this method is described in detail
in \cite{Datchev-Vasy}, we keep this section fairly brief.

\paperbody

\section{Resolvent estimates for model operators}
In this section we state the full version of the main technical result,
Theorem~\ref{resolvent-bounds-simple}.

Let  $g_0$ be the metric on $\Bn$ given by
\begin{gather}
g_{0}=\frac{4 dz^2}{(1-|z|^2)^2}. \label{metg0}
\end{gather}
We consider a one-parameter family of perturbations of $g_{0}$ supported in
a neighborhood of $\p \Bn$  of the form
\begin{gather}
g_\delta= g_{0} + \chi_\delta(z) H(z,dz), \label{metgeps}
\end{gather}
where $H$ is a symmetric 2-tensor,  which is  $\CI$ up to $\p \Bn,$
$\chi\in \CI(\mr),$ with  $\chi(s)=1$ if $|s|<\ha,$  $\chi(s)=0$ if
$|s|>1,$ and $\chi_\delta(z)=\chi((1-|z|)\delta^{-1}).$

Let   $x=\frac{1-|z|}{1+|z|},$ $W\in\CI(\Bn)$ and let $R_\delta (\sigma)=(\Delta_{g_\delta}+x^2W-\sigma^2-1^2)^{-1}$ denote the resolvent of $\Delta_{g_\delta}+x^2W.$
The spectral theorem  gives that $R_\delta(\sigma)$ is well defined as a bounded operator in 
$L^2(\Bn)=L^2(\Bn;dg)$  if $\im \sigma <<0.$
The results of \cite{Mazzeo-Melrose:Meromorphic}
show that $R_{\delta}(\sigma)$ continues meromorphically to $\mc\setminus i\mn/2$ as an operator mapping $\CI$ functions vanishing to infinite order at $\p \Bn$ to distributions in
$\Bn.$
We recall, see for example \cite{Mazzeo:Edge}, that for $k\in \mn,$
 \begin{gather*}
  H^k_0(\Bn)=\{ u \in L^2(\Bn): (x\p_x, \p_\omega)^m u \in L^2(\Bn), \;\ m \leq k\},
  \end{gather*}
 and
  \begin{gather*}
  H_0^{-k}=\{ v\in \mathcal{D}'(\Bn):  \text{ there exists } u_{\beta} \in L^2(\Bn),  
  v= \sum_{|\beta|\leq k} (x\p_x, \p_\omega)^\beta u_\beta\}.
  \end{gather*}
Our main result is the following theorem:

\begin{thm}\label{resolvent-bounds}   There exist $\delta_0>0,$  such that if $0\leq \delta\leq \delta_0,$ then
$x^a R_{\delta}(\sigma) x^b$ continues holomorphically to $\im \sigma < M,$ $M>0,$ provided
$| \sigma| >K(\delta,M),$   $b> \im\sigma$ and   $a>\im \sigma.$ Moreover,
there exists $C>0$ such that
\begin{gather}
\begin{gathered}
|| x^a R_\delta (\sigma) x^b v||_{H^k_0(\Bn)} \leq  C |\sigma|^{-1+\frac{n}{2}+k}  ||v||_{L^2(\Bn)}, \;\ k=0,1,2, \\
|| x^a R_\delta (\sigma) x^b v||_{L^2(\Bn)} \leq  C |\sigma|^{-1+\frac{n}{2}+k}  ||v||_{H_0^{-k}(\Bn)}, \;\ k=0,1,2, \\
\end{gathered} \label{sobolev1}
\end{gather}
If $a=\im \sigma,$ or $b=\im\sigma,$ or $a=b=\im\sigma,$ let $\phi_N(x)\in\CI((0,1)),$ $\phi_N\geq 1,$ $\phi_N(x)=|\log x|^{-N},$ if $x<\oq$ $\phi_N(x)=1$ if $x>\ha.$ Then in each case the operator
\begin{gather}
\begin{gathered}
T_{a,b,N}(\sigma)=:x^{\im\sigma} \phi_N(x) R_{\delta}(\sigma) x^b, \text{ if } b>\im \sigma, \\
T_{a,b,N=:}x^{a} R_{\delta}(\sigma) x^{\im \la} \phi_N(x), \text{ if } a> \im \sigma, \\
T_{a.b,N}=:x^{\im\sigma} \phi_N(x) R_{\delta}(\sigma) x^{\im \sigma} \phi_N(x),
\end{gathered}\label{deftabn}
\end{gather}  
 continues holomorphically to $\im \sigma < M,$ provided $N>\ha,$ $| \sigma| >K(\delta,M).$ Moreover 
 in each case there exists $C=C(M,N,\delta)$ such that
\begin{gather}
\begin{gathered}
|| T_{a,b,N}(\sigma) v||_{H^k_0(\Bn)} \leq  C |\sigma|^{-1+\frac{n}{2}+k}  ||v||_{L^2(\Bn)}, \;\ k=0,1,2, \\
|| T_{a,b,N}(\sigma) v||_{L^2(\Bn)} \leq  C |\sigma|^{-1+\frac{n}{2}+k}  ||v||_{H_0^{-k}(\Bn)}, \;\ k=0,1,2.
\end{gathered}\label{sobolev2}
\end{gather}
\end{thm}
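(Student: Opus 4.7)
The plan is to reduce the problem to a semiclassical parametrix construction, following the outline sketched in the introduction. Setting $\sigma = z/h$ with $z\in (1-c,1+c)\times (-Ch,Ch)$, one rewrites
$$
R_\delta(\sigma) = h^{-2}P(h,z)^{-1}, \quad P(h,z)=h^2\Delta_{g_\delta}+h^2x^2W-h^2\tfrac{n^2}{4}-z^2,
$$
so that the original polynomial bounds in $|\sigma|$ translate into the statement that $P(h,z)^{-1}$, appropriately weighted in $x$, is bounded by $O(h^{-n/2-1-k})$ on the relevant Sobolev spaces. The sign of $\im z$ is $O(h)$, and therefore $\im z$ does not enter the semiclassical principal symbol; the weights $x^a$, $x^b$ (with $a,b>\im\sigma$) provide the decay needed to compensate for the nonellipticity of $g-(\re z)^2$ on the characteristic set.

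The heart of the argument is the construction of a parametrix $G(h,z)$ for $P(h,z)$ whose Schwartz kernel lives on the semiclassical blow-up $M_{0,h}$ of $\Bn\times_0\Bn\times[0,1)_h$. The steps, in order, are: (i) solve away the conormal diagonal singularity by the usual semiclassical elliptic parametrix, using ellipticity of $P$ as a differential operator; (ii) at the semiclassical front face $\ff_h$, solve the model problem in Taylor series, exploiting the fact that the model operator there is (a conjugate of) the flat Laplacian; (iii) follow the resulting semiclassical wavefront set out of the conormal bundle of the diagonal along the Hamilton flow of the principal symbol, which, by the analysis of Section~\ref{distance-function}, traces out a smooth Lagrangian $\Lambda$ in the appropriate cotangent bundle that, for $\delta$ small, projects diffeomorphically to the base — i.e.\ the distance function $d$ is globally smooth on $M_{0,h}$ away from the diagonal; (iv) propagate an oscillatory/Lagrangian half-density $e^{-iz d/h}a(h,\cdot)$ along $\Lambda$ by solving transport equations, à la Melrose--Uhlmann and Hassell--Wunsch, modulo residual error; (v) analyze the asymptotics of the transport solutions at the left and right boundary faces $\lf$, $\rf$ of $\Bn\times_0\Bn$, reading off the leading powers of $x$, $x'$ from the geometry of $\Lambda$ near these faces; (vi) finally, eliminate the remaining error at the zero front face $\ff$ by the Mazzeo--Melrose construction applied uniformly in $h$, which yields a true parametrix with an error that is $O(h^\infty)$ and vanishes to high order at $\ff$, $\lf$, $\rf$.

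With this parametrix in hand, the $L^2$ mapping property in Section~\ref{sec:L2-bounds} (Schur-type tests on the conormal/Lagrangian half-densities, estimating each face's contribution separately) yields operator bounds $\|x^a G(h,z) x^b\|_{L^2\to L^2}=O(h^{n/2-1})$ whenever $a,b>\im z/h\cdot h=\im\sigma\cdot h/h$, with the extra factor $h^{-k}$ appearing for $H^k_0$ estimates because each $h$-derivative of $e^{-izd/h}$ produces a factor of $h^{-1}$. The error is $O(h^\infty)$ in the same norms, so it can be absorbed by a convergent Neumann series, giving an honest right inverse $R_\delta(\sigma)=h^{-2}(G(h,z)+\text{lower order})$; the meromorphy statement from \cite{Mazzeo-Melrose:Meromorphic} together with invertibility for large $|\re\sigma|$ upgrades this to holomorphy in the stated strip. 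The weighted version with $\phi_N$ when $a=\im\sigma$ or $b=\im\sigma$ follows by a standard Schur-test improvement: the borderline logarithmic divergence of the $L^2$ pairing against $x^{\im\sigma}$ at $\lf$ or $\rf$ is tamed by inserting $\phi_N=|\log x|^{-N}$ with $N>1/2$, so that $\phi_N\in L^2(dx/x)$ near $x=0$.

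The principal obstacle, and the step requiring the most care, is (iii)--(v): controlling the geometry of the flowout Lagrangian $\Lambda$ and the transport asymptotics uniformly up to the boundary faces $\lf$ and $\rf$. For the unperturbed hyperbolic metric $g_0$ the distance function and geodesic flow are explicit, and $\Lambda$ is known globally; for $g_\delta$ with $\delta$ small one has to show that $\Lambda$ remains a smooth Lagrangian on $M_{0,h}$, that its projection to the base is a diffeomorphism away from the diagonal (so $d$ is smooth), and — the subtle point — that the boundary behavior of the transport amplitudes at $\lf$ and $\rf$ degenerates in exactly the way needed so that the weight powers $x^a$, $x^{b}$ with $a,b>\im\sigma$ produce $L^2$ kernels. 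This is precisely where the smallness of $\delta$ is used: it guarantees the non-trapping structure and the global graph property of $\Lambda$ that underlies the whole construction.
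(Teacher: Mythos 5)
Your outline matches the paper's strategy closely: semiclassical reduction via $h=1/\re\lambda$, parametrix construction on the blown-up space $M_{0,\hbar}$ (diagonal solve, Taylor solve at the semiclassical front face, transport along the flowout Lagrangian, Mazzeo--Melrose correction at the zero front face), weighted $L^2$ bounds on $G$ and $E$ by Schur-type estimates, absorption of the error by a Neumann-series inversion of $\Id+x^{-b}Ex^b$ for $\|\cdot\|<1/2$, and upgrade from meromorphy to holomorphy via Mazzeo--Melrose. The $\phi_N=|\log x|^{-N}$, $N>\tfrac12$ fix at the borderline weight is also the same.

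The one place where you depart from the paper---and where your sketch has a small gap---is the Sobolev estimates for $k=1,2$. You attribute the extra $|\sigma|^k\sim h^{-k}$ to differentiating the oscillatory factor $e^{-izd/h}$ in the parametrix $G$. That heuristic is plausible but would require carrying a Sobolev mapping property for $G$ through the Neumann-series inversion, which is not automatic. The paper takes a cleaner route that avoids the parametrix entirely at this stage: starting from the already-proved $L^2$ bound on $x^aR_\delta(\sigma)x^b$, it applies the operator identity $\Delta_{g_\delta}R_\delta(\sigma)=\Id+(\sigma^2+1-x^2W)R_\delta(\sigma)$ together with the commutator formula $[\Delta_{g_\delta},x^a]=A_1x^a+A_2x^a\,xD_x$, 0-elliptic regularity (Mazzeo's edge calculus), and interpolation between $H^0_0$, $H^1_0$, $H^2_0$; the $|\sigma|^2$ enters through the explicit coefficient, and the $k=1$ case is obtained by interpolation from $k=0$ and $k=2$. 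You should replace the ``each $h$-derivative costs $h^{-1}$'' heuristic with this commutator/interpolation argument, or else make the Sobolev mapping property of $G(h,z)$ and its stability under the Neumann series explicit. Also note a sign slip: the $L^2$ bound for the weighted parametrix is $\|x^aG(h,z)x^b\|=O(h^{-n/2-1})$, not $O(h^{n/2-1})$, though you state the correct target exponent earlier in your outline.
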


\section{The distance Function}\label{distance-function}

In the construction of the uniform parametrix for the resolvent we will
make use of an appropriate resolution of the distance function, and
geodesic flow, for the metric $g_\delta.$ This in turn is obtained by
perturbation from $\delta =0,$ so we start with an analysis of the
hyperbolic distance, for which there is an explicit formula. Namely, the
distance function for the hyperbolic metric, $g_{0},$ is given in terms of
the Euclidean metric on the ball by
\begin{equation}
\begin{gathered}
\dist_{0}:\Bno \times \Bno \longrightarrow \mr\Mwhere\\
\cosh(\dist_{0}(z,z'))=1+\frac{2|z-z'|^2}{(1-|z|^2)(1-|z'|^2)}.
\end{gathered}
\label{distf}\end{equation}

We are particularly interested in a uniform description as one or both of the
points approach the boundary, i.e.\ infinity. The boundary behavior is
resolved by lifting to the `zero stretched product' as is implicit in
\cite{Mazzeo:Edge}. This stretched product, $\Bnc\times_0 \Bnc,$ is the
compact manifold with corners defined by blowing up the intersection of the
diagonal and the corner of $\Bnc\times\Bnc:$
\begin{equation}
\begin{gathered}
\beta:\Bnc\times_0\Bnc=[\Bnc\times\Bnc;\pa\Diag]\longrightarrow\Bnc\times\Bnc,
\\
\pa\Diag=\Diag\cap \; (\p\Bnc \times \p \Bnc)=\{(z,z);|z|=1\},\\
\Diag=\{(z,z')\in \Bnc\times\Bnc;z=z'\}.
\end{gathered}\label{zero-blow-up}
\end{equation}
See Figure~\ref{fig1} in which $(x,y)$ and $(x',y')$ are
local coordinates near a point in the center of the blow up, with boundary
defining functions $x$ and $x'$ in the two factors.
\begin{figure}[int1]
\epsfxsize= 3.5in
\centerline{\epsffile{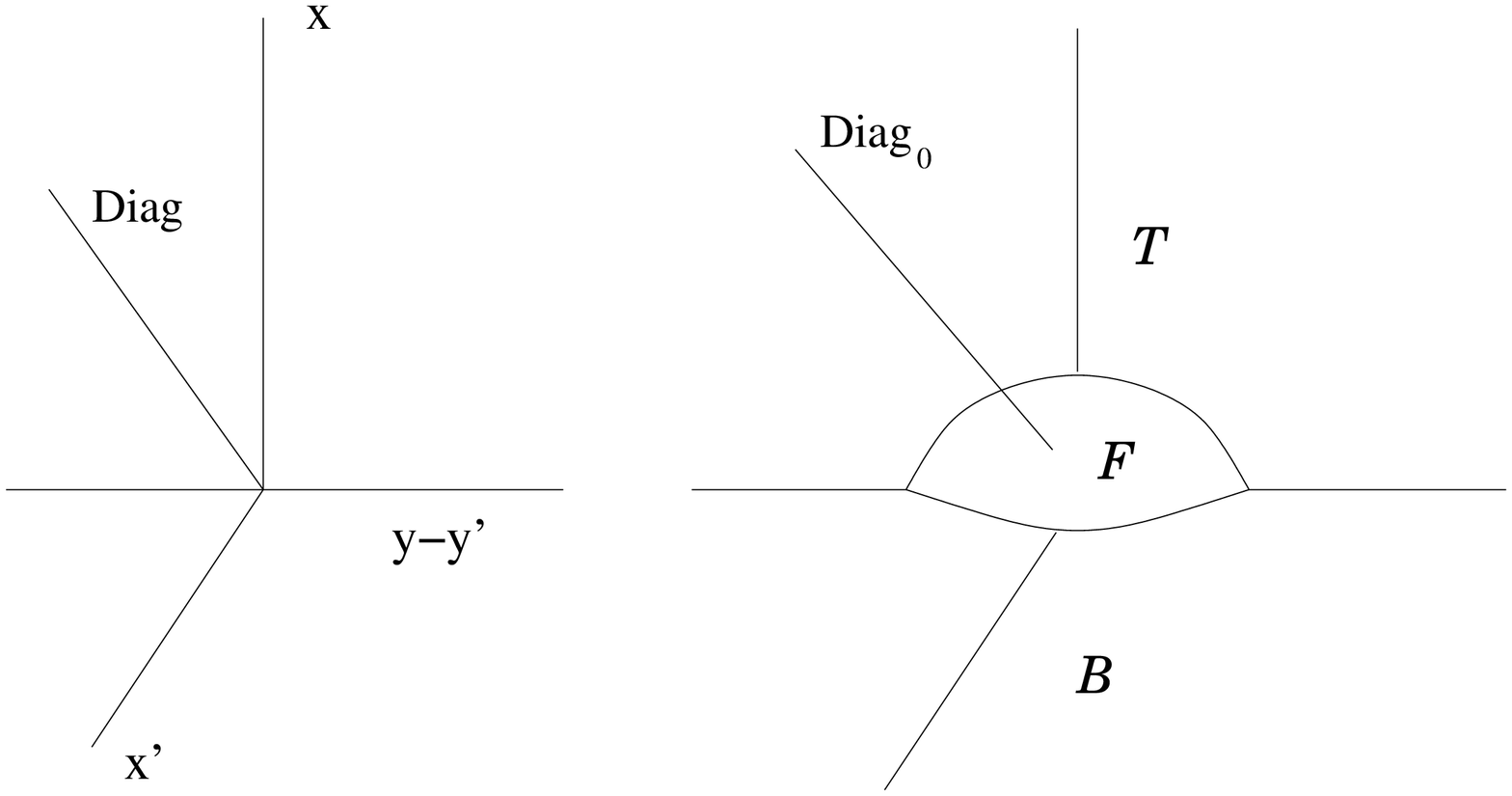}}
\caption{The stretched product $\Bnc\times_0\Bnc.$}
\label{fig1}
\end{figure}

Thus $\Bnc\times_0\Bnc$ has three boundary hypersurfaces, the front face
introduced by the blow up and the left and right boundary faces which map
back to $\pa\Bnc\times\Bnc$ and $\Bnc\times\pa\Bnc$ respectively under
$\beta.$ Denote by $\Diag_0$ the lift of the diagonal, which in this case
is the closure of the inverse image under $\beta$ of the interior of the
diagonal in $\Bnc\times\Bnc.$

\begin{lemma}\label{dist0} Lifted to the interior of $\Bnc\times_0\Bnc$ the
  hyperbolic distance function extends smoothly up to the interior of the
  front face, in the complement of $\Diag_0,$ where it is
  positive and, for an appropriate choice $\rho _L\in\CI(\Bnc\times_0\Bnc)$
  of defining function for the left boundary and 
  with $\rho _R$ its reflection,
\begin{equation}
\begin{gathered}
\beta^*\dist_{0}(z,z')=
-\log(\rho_L\rho_R)+ F,\\
0<F\in\CI(\Bnc \times_0 \Bnc\setminus \Diag_0 ),\
F^2\in\CI(\Bnc \times_0 \Bnc),
\end{gathered}
\label{blowd}\end{equation}
with $F ^2$ a quadratic defining function for $\Diag_0.$
\end{lemma}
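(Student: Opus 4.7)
The strategy is to derive the decomposition directly from the explicit formula \eqref{distf}. Using $\cosh t-1=2\sinh^2(t/2)$, I would first rewrite it as
\[
\sinh^2(\dist_0/2)=v,\qquad
v=\frac{|z-z'|^2}{(1-|z|^2)(1-|z'|^2)}.
\]
The plan is first to show that the lift $\beta^*v$ factors as $V/(\rho_L\rho_R)$ on $\Bnc\times_0\Bnc$, with $V\in\CI(\Bnc\times_0\Bnc)$ non-negative, positive off $\Diag_0$, and vanishing to second order precisely at $\Diag_0$; and second to read off the decomposition from the identity $e^{\dist_0/2}=\sqrt v+\sqrt{v+1}$.

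For the factorization I would introduce polar coordinates $x=\rho\Phi$, $x'=\rho\Psi$, $y-y'=\rho\Theta$ resolving the blow-up near a point of $\partial\Diag$, with $(\Phi,\Psi,\Theta)$ on the quarter-sphere $\Phi^2+\Psi^2+|\Theta|^2=1$ and $\rho$ a defining function for the front face. Writing $z=(1-x)\omega$ on the ball gives the identity $|z-z'|^2=(x-x')^2+(1-x)(1-x')|\omega-\omega'|^2$, so both $|z-z'|^2$ and $(1-|z|^2)(1-|z'|^2)=x(2-x)x'(2-x')$ acquire a factor of $\rho^2$ on lifting, which cancels in the ratio. With $\rho_L$ taken to agree with $\sqrt2\,\Phi$ near the front face (and with the standard boundary defining function away from it) and $\rho_R$ its reflection, I would verify that $V:=\rho_L\rho_R\,v$ is smooth on $\Bnc\times_0\Bnc$, with $V|_{\rho=0}=\ha\bigl[(\Phi-\Psi)^2+|\Theta|^2 G\bigr]$ for a smooth positive $G$, and vanishes to second order exactly on $\Diag_0$. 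Since $\Diag_0$ is disjoint from the left and right boundary faces, $V$ is bounded away from zero there.

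Given the factorization, multiplying $e^{\dist_0/2}=\sqrt v+\sqrt{v+1}$ through by $\sqrt{\rho_L\rho_R}$ and taking logarithms yields
\[
\dist_0=-\log(\rho_L\rho_R)+F,\qquad
F=2\log\bigl(\sqrt V+\sqrt{V+\rho_L\rho_R}\bigr).
\]
Smoothness of $F$ off $\Diag_0$ is immediate from this expression; on the boundary faces $\rho_L\rho_R=0$ and the formula degenerates to the smooth function $\log(4V)$.

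The delicate step will be smoothness of $F^2$ across $\Diag_0$, since $\sqrt V$ is only H\"older-continuous at the diagonal. To handle it I would exploit the freedom in choosing $\rho_L$ to arrange $\rho_L\rho_R\equiv1$ on a full neighborhood $U$ of $\Diag_0$ (not just along it): because $\Diag_0$ lies in the interior of $\Bnc\times_0\Bnc$ relative to the left and right boundary faces, inside $U$ the defining functions may be multiplied by a smooth positive factor (equal to $(\rho_L\rho_R)^{-1/2}$ near $\Diag_0$ and to $1$ outside $U$) without affecting their vanishing on those faces. With this normalization the formula collapses near $\Diag_0$ to $F=2\sinh^{-1}\sqrt V$; since $\sinh^{-1}$ is odd, $(\sinh^{-1} w)^2$ is a smooth function of $w^2$, so $F^2=4V\,h(V)^2$ with $h$ smooth and $h(0)=1$, which is manifestly smooth on $\Bnc\times_0\Bnc$ and inherits the quadratic defining-function property from $V$. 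Positivity of $F$ off $\Diag_0$ can then be arranged, if necessary, by a further multiplication of $\rho_L$ by a smooth positive factor equal to $1$ on $U$ and large enough outside, which shifts $F$ by a non-negative smooth summand supported away from $\Diag_0$.
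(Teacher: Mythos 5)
Your proposal is correct and follows the same overall strategy as the paper's proof: start from the explicit formula \eqref{distf}, show that the lifted Euclidean distance squared factors across the front face with a quadratic defining function for $\Diag_0$ as the remaining factor, pass through a hyperbolic identity, and take logarithms. The main variations are cosmetic or clarifying rather than structural. You rewrite $\cosh\dist_0-1$ as $2\sinh^2(\dist_0/2)$ and use $e^{\dist_0/2}=\sinh(\dist_0/2)+\cosh(\dist_0/2)$, where the paper works with $\cosh\dist_0$ and $e^{\dist_0}=\cosh\dist_0+\sinh\dist_0$; and you carry out the factorization of $\beta^*|z-z'|^2$ by writing out explicit polar coordinates and the identity $|z-z'|^2=(x-x')^2+(1-x)(1-x')|\omega-\omega'|^2$, whereas the paper invokes rotational symmetry and the homogeneity of $|z-z'|^2$ to reach the same conclusion more quickly. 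The genuinely useful improvement is your treatment of the smoothness of $F^2$ across $\Diag_0$: by normalizing $\rho_L\rho_R\equiv1$ on a neighborhood of $\Diag_0$ you collapse $F$ to $2\sinh^{-1}\sqrt V$ there, and the oddness of $\sinh^{-1}$ then makes $F^2=4V\,h(V)^2$ with $h$ smooth and $h(0)=1$, so that $F^2$ is manifestly smooth and a quadratic defining function. The paper's proof is terse on exactly this point (it only notes which term dominates the square root near $\Diag_0$ and near the boundary faces, then ``takes logarithms''), so your argument supplies a clean justification for the claim. Your handling of the positivity of $F$ — first getting $F=2\sinh^{-1}\sqrt V\ge0$ near $\Diag_0$, then shifting $F$ upward away from $\Diag_0$ by rescaling $\rho_L$ by a symmetric factor that equals $1$ near $\Diag_0$ — is also sound, and is somewhat more transparent than the paper's parenthetical prescription for the defining function.
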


\begin{proof} We show first that the square of the Euclidean distance
  function, $|z-z'|^2,$ lifts to be smooth on $\Bnc \times_0 \Bnc$ and to
  vanish quadratically on $\Diag_0$ and on the front face produced by the
  blow up 
\begin{equation}
\beta ^*(|z-z'|^2)=R^2f,\ f\in\CI(\Bnc \times_0 \Bnc).
\label{SeClRe.12}\end{equation}
Here $f\ge0$ vanishes precisely at $\Diag_0$ and does so
quadratically.  Indeed this is certainly true away from the front face
produced by the blow up. The spaces and the distance function are invariant
under rotational symmetry, which lifts under the blow up, so me may fix the
spherical position of one variable and suppose that $z'=(1-x',0),$ with
$x'>0$ and small, the blow up is then of $z=z',$ $x'=0.$ The fact that the
variables are restricted to the unit ball is now irrelevant, and using the
translation-invariance of the Euclidean distance we can suppose instead
that $z'=(x',0)$ and blow up $z=z',$ $x'=0.$ Since $|(x,0)-z|^2$ is
homogeneous in all variables it lifts to be the product of the square of a
defining function for the front face and a quadratic defining function for
the lift of the diagonal. This proves \eqref{SeClRe.12} after restriction
to the preimage of the balls and application of the symmetry.

The hyperbolic distance is given by \eqref{distf} where $1-|z|^2$ and
$1-|z'|^2$ are boundary defining functions on the two factors. If $R$ is
a defining function for the front face of $\Bnc\times_0\Bnc$ then these lift
to be of the form $\rho _LR$ and $\rho _RR$ so combining this with
\eqref{SeClRe.12}
\begin{equation}
\beta ^*\cosh(\dist_{0}(z,z'))=1+2\frac{f}{\rho _L\rho _R}.
\label{SeClRe.8}\end{equation}

Now $\exp(t)= \cosh t+ \left[ \cosh^2t-1\right]^\ha,$ for $t>0,$ and from
\eqref{SeClRe.8} it follows that
\begin{equation}
\exp (\dist_{0}(z,z'))=1+2\frac{f}{\rho _L\rho _R}+\left(2\frac{f}{\rho
  _L\rho _R}+4\frac{f^2}{\rho^2 _L\rho^2 _R}\right)^{\frac12}.
\label{SeClRe.9}\end{equation}
Near $\Diag_0$ the square-root is dominated by the first part and near the
left and right boundaries by the second part, and is otherwise positive and
smooth. Taking logarithms gives the result as claimed, with the
defining function taken to be one near $\Diag_0$ and to be everywhere smaller
than a small positive multiple of $(1-|z|^2)/R.$ 
\end{proof}

This result will be extended to the case of a perturbation of the
hyperbolic metric by constructing the distance function directly from
Hamilton-Jacobi theory, i.e.\ by integration of the Hamilton vector field
of the metric function on the cotangent bundle. The presence of only simple
logarithmic singularities in \eqref{blowd} shows, perhaps somewhat
counter-intuitively, that the Lagrangian submanifold which is the graph of
the differential of the distance should be smooth (away from the diagonal)
in the b-cotangent bundle of $M^2_0.$ Conversely if this is shown for
the perturbed metric then the analogue of \eqref{blowd} follows except for
the possibility of a logarithmic term at the front face.

Since the metric is singular near the boundary, the dual metric function on
$T^*\Bnc$ is degenerate there. In terms of local coordinates near a
boundary point, $x,$ $y$ where the boundary is locally $x=0,$ and dual
variables $\xi,$ $\eta,$ the metric function for hyperbolic space is of the
form
\begin{equation}
2p_0=x^2\xi^2 + 4 x^2(1-x^2)^{-2} h_0(\omega,\eta)
\label{SeClRe.15}\end{equation}
where $h_0$ is the metric function for the induced metric on the boundary. 

Recall that the $0$-cotangent bundle of a manifold with boundary $M,$
denoted $\To M,$ is a smooth vector bundle over $M$ which is a rescaled
version of the ordinary cotangent bundle. In local coordinates near, but
not at, the boundary these two bundles are identified by the (rescaling) map
\begin{equation}
T^* M\ni(x,y, \xi,\eta) \longmapsto (x,y,\la,\mu)=(x,y, x\xi,x \eta)\in \To M.
\label{SeClRe.40}\end{equation}
It is precisely this rescaling which makes the hyperbolic metric into a
non-degenerate fiber metric, uniformly up to the boundary, on this
bundle. On the other hand the b-cotangent bundle, also a completely natural
vector bundle, is obtained by rescaling only in the normal variable
\begin{equation}
T^* M\ni(x,y, \xi,\eta)\longmapsto (x,y,\la,\eta)=(x,y, x\xi,\eta)\in\Tb^*M.
\label{SeClRe.20}\end{equation}
Identification over the interior gives natural smooth vector bundle maps 
\begin{equation}
\iota_{\text{b}0}:\oT M\longrightarrow \Tb M,\
\iota^t_{\text{b}0}:\Tb^* M\longrightarrow \To M.
\label{SeClRe.27}\end{equation}
The second scaling map can be constructed directly in terms of blow up.

\begin{lemma}\label{SeClRe.26} If $\To\pa M\subset\Tos{\pa M}M$ denotes
  the annihilator of the null space, over the boundary, of $\iota
  _{\text{b}0}$ in \eqref{SeClRe.27} then there is a canonical diffeomorphism
\begin{equation}
\Tb^*M \longrightarrow [\To M,\To\pa M]\setminus
\beta^{\#}\left(\Tos{\pa M}M\right),\
\beta:[\To M,\To\pa M]\longrightarrow \To M,
\label{SeClRe.28}\end{equation}
to the complement, in the blow up, of the lift of the boundary: 
\begin{equation*}
\beta ^{\#}(\Tos{\pa M}M)=\overline{\beta^{-1}(\Tos{\pa M}M\setminus\To\pa M)}.
\label{SeClRe.29}\end{equation*}
\end{lemma}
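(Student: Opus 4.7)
The assertion is local and canonical, so I will verify it in adapted local coordinates and then note that the construction glues. Choose coordinates $(x,y_1,\ldots,y_{n-1})$ near a boundary point with $x$ a boundary defining function, and introduce the usual linear fiber coordinates $(\la,\mu)$ on $\To M$ (dual to the smooth frame $x\pa_x,\ x\pa_{y_i}$ of $\oT M$) and $(\la,\eta)$ on $\Tb^*M$ (dual to $x\pa_x,\ \pa_{y_i}$). The transpose map $\iota_{b0}^t:\Tb^*M\to\To M$ then reads $(x,y,\la,\eta)\mapsto(x,y,\la,x\eta)$, and dually the null space of $\iota_{b0}$ at a boundary point is spanned by the $x\pa_{y_i}$; its annihilator therefore cuts out the $n$-dimensional p-submanifold $\To\pa M=\{x=0,\,\mu=0\}$ inside $\To M$.

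Next I will describe the blow-up explicitly. The transverse directions to $\To\pa M$ in $\To M$ are $(x,\mu_1,\ldots,\mu_{n-1})$, so in the projective chart in which $x$ is a defining function for the new front face, $[\To M,\To\pa M]$ has smooth fiber coordinates $(x,y,\la,\eta')$ with $\eta'_i=\mu_i/x$. This chart covers everything in the blow-up except the lift of the old boundary $\{x=0\}\subset\To M$, which in these coordinates sits at $|\eta'|=\infty$ and is precisely $\beta^\#(\Tos{\pa M}M)$. Setting $\eta=\eta'$ gives the desired identification of $\Tb^*M$ with the complement of this lifted face, and on the interior $x>0$ the identification agrees with $\iota_{b0}^t$.

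It then remains to verify canonicity, i.e.\ that the local identification does not depend on the chosen coordinates. Under a change of boundary-adapted coordinates $(x,y)\mapsto(\tilde x,\tilde y)$ with $\tilde x=a x$ for some positive $a\in\CI$, the transition law for $(\la,\mu)$ on $\To M$ is a linear bundle isomorphism whose restriction to the front face of $[\To M,\To\pa M]$ induces, on the projective coordinates $\mu_i/x$, exactly the canonical transition law for the fiber coordinates $\eta_i$ on $\Tb^*M$. This compatibility is what makes the local diffeomorphisms glue to a global one. The step I expect to demand the most care is the smooth extension of $\iota_{b0}^t$ across the new front face: on $\To M$ this map is merely a smooth surjection that becomes degenerate at $x=0$, and one must check that after introducing the projective variable $\eta=\mu/x$, the induced map from $\Tb^*M$ is a smooth bijection up to and including $x=0$, with the only absent points being those at which $\eta$ would have to take the value $\infty$, i.e.\ the lift $\beta^\#(\Tos{\pa M}M)$.
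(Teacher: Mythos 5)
Your proof is correct and follows essentially the same route as the paper's: work in boundary-adapted coordinates, identify the null space of $\iota_{\text{b}0}$ and its annihilator $\{x=0,\mu=0\}$, observe that in the projective chart where $x$ defines the front face the variable $\mu/x$ is smooth and agrees with $\eta$, and conclude via the explicit form $\mu=x\eta$ of the bundle map. The only cosmetic difference is that you verify coordinate-independence by a direct transition-law computation, whereas the paper gets naturality for free by noting that the local formula coincides with the coordinate expression of the canonical map $\iota^t_{\text{b}0}$, which is already intrinsically defined over the interior.
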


\begin{proof} In local coordinates $x,$ $y_j,$ the null space of $\iota
  _{\text{b}0}$ in \eqref{SeClRe.27} is precisely the span of the
  `tangential' basis elements
  $x\pa_{y_j}$ over each boundary point. Its annihilator, $\To\pa M$ is
  given in the coordinates \eqref{SeClRe.20} by $\mu=0$ at $x=0.$
The lift of the `old boundary' $x=0$ is precisely the boundary
  hypersurface near which $|\mu|$ dominates $x.$ Thus, $x$ is a valid
  defining function for the boundary of the complement, on the right in
  \eqref{SeClRe.28} and locally in this set, above the coordinate patch in $M,$
  $\eta_j=\mu_j/x$ are smooth functions. The natural bundle map
  $\Tb^*M\longrightarrow \To M$ underlying \eqref{SeClRe.28} is given in
  these coordinates by $\lambda \frac{dx}x+\eta\cdot dy\longmapsto
  \lambda \frac{dx}x+x\eta\cdot\frac{dy}x,$ which is precisely the same
  map, $\mu=x\eta,$ as appears in \eqref{SeClRe.20}, so the result,
  including naturality, follows.
\end{proof}

The symplectic form lifted to $\To M$ is
\begin{gather*}
\bo \omega=\frac{1}{x} d\la \wedge dx +
\frac{1}{x} d\mu \wedge dy - \frac{1}{x^2} dx \wedge (\mu\cdot dy)
\end{gather*}
whereas lifted to $\Tb^*M$ it is 
\begin{equation}
{}^{\text{b}}\omega=\frac{1}{x} d\la \wedge dx+ d\eta \wedge dy .
\label{SeClRe.21}\end{equation}

Working, for simplicity of computation, in the non-compact upper half-space
model for hyperbolic space the metric function lifts to the non-degenerate
quadratic form on $\To M:$
\begin{equation}
2p_0=\la^2+h_0(\omega,\mu)
\label{SeClRe.16}\end{equation}
where $h_0=|\mu|^2$ is actually the Euclidean metric. The $0$-Hamilton
vector field of $p_0\in\CI(\To M),$ just the lift of the Hamilton
vector field over the interior, is determined by 
\begin{equation}
\bo\omega(\cdot, \boH_p)=dp.
\label{SeClRe.44}\end{equation}
Thus
\begin{gather*}
\boH_p= x\frac{\p p}{\p \la} \p_x + x \frac{\p p}{\p \mu} \cdot \p_y -
\left( \mu\cdot \frac{\p p}{\p \mu} +
x \frac{\p p}{\p x}\right) \p_{\la} -
\left( -\frac{\p p}{\p \la}\mu + x \frac{\p p}{\p y}\right)\cdot \p_\mu
\end{gather*}
and hence 
\begin{equation}
\boH_{p_0}=\la( x\p_x+\mu\p_\mu)-h_0\p_\la + \frac x2\sH_{h_0}
\label{SeClRe.17}\end{equation}
is tangent to the smooth (up to the boundary) compact sphere bundle given
by $p_0=1.$ 

Over the interior of $M=\bbB^n,$ the hyperbolic distance from any interior
point of the ball is determined by the graph of its differential, which is
the flow out inside $p_0=1,$ of the intersection of this smooth compact
manifold with boundary with the cotangent fiber to the initial
point. Observe that $\sH_{p_0}$ is also tangent to the surface $\mu=0$ over
the boundary, which is the invariantly defined subbundle $\To\pa M.$ Since the
coordinates can be chosen to be radial for any interior point, it follows
that all the geodesics from the interior arrive at the boundary at $x=0,$
$\mu=0,$ corresponding to the well-known fact that hyperbolic geodesics are
normal to the boundary. This tangency implies that $\sH_{p_0}$ lifts under
the blow up of $\To\pa M$ in \eqref{SeClRe.28} to a
smooth vector field on $\Tb M;$ this can also be seen by direct computation.

\begin{lemma}\label{SeClRe.19} The graph of the differential of the
  distance function from any interior point, $p\in\Bno,$ of hyperbolic
  space extends by continuity to a smooth Lagrangian submanifold of
  $\Tb^*(\Bnc\setminus\{p\})$ which is transversal to the boundary, is a
  graph over $\Bnc\setminus\{p\}$ and is given by integration of a
  non-vanishing vector field up to the boundary.
\end{lemma}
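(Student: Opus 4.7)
The plan is to read the lemma off from Lemma~\ref{dist0} applied with one argument held fixed in the interior, using Lemma~\ref{SeClRe.26} to translate between the $0$- and b-cotangent pictures. Fix $p\in\Bno$. Since $p$ is interior, the slice $\{p\}\times\Bnc$ lifts to $\Bnc\times_0\Bnc$ without meeting the left boundary face or the corner, so the left defining function $\rho_L$ is smooth and strictly positive on the lift of this slice, while $\rho_R$ differs from the boundary defining function $x$ on $\Bnc$ by a smooth positive factor. Lemma~\ref{dist0} then yields
\[
r(z):=\dist_0(p,z)=-\log x(z)+F_p(z),
\]
with $F_p\in\CI(\Bnc\setminus\{p\})$ smooth up to $\p\Bnc$ and $F_p^2$ a quadratic defining function for $z=p$.

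In boundary coordinates $(x,y)$, write $dF_p=a(x,y)\,dx+b(x,y)\cdot dy$ with $a,b$ smooth up to $x=0$. Then
\[
dr=(-1+xa)\frac{dx}{x}+b\cdot dy,
\]
so in the b-cotangent trivialization of Lemma~\ref{SeClRe.26} the graph $\Lambda_p:=\{(z,dr(z))\}$ is given by $\la=-1+xa$, $\eta=b$, both smooth up to $x=0$ with $\la|_{x=0}=-1\ne 0$. Hence $\Lambda_p$ extends to a smooth section of $\Tb^*(\Bnc\setminus\{p\})$, which is automatically a graph over $\Bnc\setminus\{p\}$ and meets the boundary hypersurface of $\Tb^*\Bnc$ transversally, as any smooth section over a manifold with boundary does. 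The Lagrangian property holds over the interior and extends by continuity up to $\p\Bnc$.

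For the non-vanishing vector field, I would use a rescaling of the b-Hamilton vector field of $p_0$. A direct computation from \eqref{SeClRe.21}, parallel to the one carried out for $\boH_{p_0}$ just above the lemma, gives
\[
{}^\bl\sH_{p_0}=\la x\p_x-x^2 h_0(y,\eta)\p_\la+\tfrac{x^2}{2}\sH_{h_0},
\]
so
\[
V:=x^{-1}\,{}^\bl\sH_{p_0}=\la\,\p_x-x h_0\p_\la+\tfrac{x}{2}\sH_{h_0}
\]
is a smooth vector field on $\Tb^*\Bnc$ which restricts to $-\p_x$ on $\Lambda_p\cap\{x=0\}$ and is therefore non-vanishing up to the boundary. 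Over the interior $V$ is a nonzero multiple of the geodesic spray, so integrating $V$ outward from a small sphere in $T^*_p\Bnc\cap\{p_0=\tfrac{1}{2}\}$ sweeps out $\Lambda_p$.

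The main point requiring care is the bookkeeping: tracking that only the $\rho_R$ factor of Lemma~\ref{dist0} contributes along $\{p\}\times\Bnc$, and matching the explicit expression for $dr$ against the $\Tb^*$-coordinates produced by Lemma~\ref{SeClRe.26}. Once the splitting $r=-\log x+F_p$ is established, every remaining assertion reduces to inspection of smooth nonvanishing coefficients, and no serious analytic obstacle arises.
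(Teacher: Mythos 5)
Your proof is correct, but it takes a partially different route from the paper's. For the assertions that the Lagrangian is smooth up to $\p\Bnc$, is a graph, and is transversal to the boundary, you read these directly off the explicit decomposition $r=-\log x+F_p$ coming from Lemma~\ref{dist0}, which is the most direct argument available for the exact hyperbolic metric. The paper, by contrast, derives all of the conclusions of Lemma~\ref{SeClRe.19} purely from the Hamiltonian picture: it lifts $\boH_{p_0}$ to $\Tb^*M$ via the blow-up of Lemma~\ref{SeClRe.26}, factors out $x$, and observes that the rescaled field is transversal to the boundary on the energy surface. The reason the paper insists on this second, less explicit route is that it is the one that survives perturbation of the metric (and then the two-point version on $M^2_0$), whereas the clean splitting $r=-\log x+F_p$ is a consequence of the explicit hyperbolic formula in \eqref{distf} and is not available a priori for $g_\delta$. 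You do, however, also carry out the Hamilton vector field computation for the last clause of the lemma, and there your argument coincides with the paper's; in fact your coefficient $\tfrac{x^2}{2}\sH_{h_0}$ is the one consistent with \eqref{SeClRe.17}, and your boundary value $\la|_{x=0}=-1$ is the correct sign for the differential of the distance (the paper's stated $p_0=1$, $\la=1$ is an internal normalization slip). One small bookkeeping remark: the trivialization $(\la,\eta)$ you use comes from \eqref{SeClRe.20} rather than from Lemma~\ref{SeClRe.26} itself, which is the statement about reconstructing $\Tb^*M$ from $\To M$ by blow-up.
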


\begin{proof} Observe the effect of blowing up $\mu=0,$ $x=0$ on the
  Hamilton vector field in \eqref{SeClRe.17}. As noted above, near the
  front face produced by this blow up valid coordinates are given by
  $\eta=\mu/x,$ $\la$ and $y,$ with $x$ the boundary defining
  function. Since this transforms $\bo\omega$ to ${}^{\text{b}}\omega$ it
  follows that $\boH_{p_0}$ is transformed to  
\begin{equation}
{}^{\text{b}}\sH_{p_0}=x(\la \p_x-xh_0\p_{\la}+x\sH_{h_0})
\label{SeClRe.22}\end{equation}
where now $\sH_{h_0}$ is the Hamilton vector field with respect to $y$ and
$\eta.$

The constant energy surface $p_0=1$ remains smooth, but non-compact, near
the boundary, which it intersects transversally in $\la=1.$ From this
the result follows -- with the non-vanishing smooth vector field being
${}^{\text{b}}\sH_{p_0}$ divided by the boundary defining function $x$ near
the boundary.
\end{proof}

The logarithmic behaviour of the distance function in \eqref{blowd} with
one point fixed in the interior is a consequence of Lemma~\ref{SeClRe.26},
since the differential of the distance must be of the form $adx/x+b\cdot
dy$ for smooth functions $a$ and $b,$ and since it is closed, $a$ is
necessarily constant on the boundary.

To examine the distance as a function of both variables a similar
construction for the product, in this case $M^2,$ $M=\Bnc$ can be used. The
graph, $\Lambda,$ of the differential of the distance $d(p,p')$ as a
function on $M^2$ is the joint flow out of the conormal sphere bundle to
the diagonal in $T^*M^2=T^*M\times T^*M,$ under the two Hamiltonian vector
fields of the two metric functions within the product sphere bundles. As
before it is natural to lift to $\To M\times\To M$ where the two sphere
bundles extend smoothly up to the boundary. However, one can make a
stronger statement, namely that the lifted Hamilton vector fields are
smooth on the b-cotangent bundle of $M^2_0,$ and indeed even on the
`partially b'-cotangent bundle of $M^2_0$, with `partially' meaning it is
the standard cotangent bundle over the interior of the front face. This is
defined and discussed in more detail below; note that the identification of
these bundles over the interior of $M^2_0$ extends to a smooth map from
these bundles to the lift of $\To M\times\To M$, as we show later,
explaining the `stronger' claim.

Smoothness of the Hamilton vector field together with transversality
conditions shows that the flow-out of the conormal bundle of the diagonal
is a smooth Lagrangian submanifold of the cotangent bundle under
consideration; closeness to a particular Lagrangian (such as that for
hyperbolic space) restricted to which projection to $M^2_0$ is a
diffeomorphism, guarantees that this Lagrangian is also a graph over
$M^2_0.$ Thus, over the interior, $(M^2_0)^\circ$, it is the graph of the
differential of the distance function, and the latter is smooth; the same
would hold globally if the Lagrangian were smooth on $T^*M^2_0.$ The latter
cannot happen, though the Lagrangian will be a graph in the b-, and indeed
the partial b-, cotangent bundles over $M^2_0.$ These give regularity of
the distance function, namely smoothness up to the front face (directly for
the partial b bundle, with a short argument if using the b bundle), and the
logarithmic behavior up to the other faces. Note that had we only showed
the graph statement in the pullback of $\To M\times\To M,$ one would obtain
directly only a weaker regularity statement for the distance function;
roughly speaking, the closer the bundle in which the Lagrangian is
described is to the standard cotangent bundle, the more regularity the
distance function has.

In fact it is possible to pass from the dual of the lifted product
0-tangent bundle to the dual of the b-tangent bundle, or indeed the partial
b-bundle, by blow-up, as for the single space above. Observe first that the
natural inclusion
\begin{equation}
\iota_{0\text{b}}\times \iota_{0\text{b}}:\oT M\times\oT M\longrightarrow
\Tb M^2=\Tb M\times\Tb M
\label{SeClRe.31}\end{equation}
identifies the sections of the bundle on the left with those sections of the
bundle on the right, the tangent vector fields on $M^2,$ which are also
tangent to the two fibrations, one for each boundary hypersurface
\begin{equation}
\phi_L:\pa M\times M\longrightarrow M,\ \phi_R:M\times\pa M\longrightarrow M.
\label{SeClRe.33}\end{equation}

\begin{lemma}\label{SeClRe.32} The fibrations \eqref{SeClRe.33}, restricted
  to the interiors, extend by continuity to fibrations $\phi_L$, resp.\ $\phi_R$,
of the two `old'
  boundary hypersurfaces of $M^2_0$ and the smooth sections of the lift of
  $\oT M\times\oT M$ to $M^2_0$ are naturally identified with the subspace of the
  smooth sections of $\Tb M^2_0$ which are tangent to these fibrations and
  also to the fibres of the front face of the blow up,
  $\beta_0:\ff(M^2_0)\longrightarrow \pa M\times\pa M.$
\end{lemma}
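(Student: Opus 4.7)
The plan is to verify both claims by direct computation in local coordinate charts on $M^2_0$, combined with the universal property of the blow-up to extend the fibrations.

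For the extension of the fibrations, $L$ is by construction the lift of $\pa M \times M$ under $\beta$ and is naturally diffeomorphic to the blow-up $[\pa M \times M;\pa\Diag]$, so the blow-down restricts to a smooth map $\beta_L: L \to \pa M \times M$. The composition $\phi_L \circ \beta_L: L \to M$ is then the required smooth extension; it remains a fibration because each fiber of $\phi_L$ meets $\pa\Diag$ in at most one point, which the blow-up simply replaces by a sphere. The argument for $\phi_R$ is symmetric.

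For the module identification, I shall work in the projective chart $(x, y, s', u) = (x, y, x'/x, (y-y')/x)$, which covers a neighborhood of $\ff \cap R$; a symmetric chart handles $\ff \cap L$, while away from $\ff$ the product chart $(x, y, x', y')$ suffices. A routine substitution shows that the standard generators of $\oT M \times \oT M$ lift as
\begin{gather*}
x\pa_x \mapsto x\pa_x - s'\pa_{s'} - u\cdot\pa_u, \quad x\pa_{y_j} \mapsto x\pa_{y_j} + \pa_{u_j},\\
x'\pa_{x'} \mapsto s'\pa_{s'}, \quad x'\pa_{y'_j} \mapsto -s'\pa_{u_j},
\end{gather*}
each of which is manifestly a smooth b-vector field on $M^2_0$. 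The extended $\phi_R$ takes the form $(x, y, 0, u) \mapsto y - xu$ on $R$ while the fibration $\ff \to \pa\Diag$ is $(0, y, s', u) \mapsto y$, and a direct check confirms that each generator is tangent to $\phi_R$ on $R$ and to the $\ff$-fibers on $\ff$.

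The substantive step is the converse. Given a smooth b-vector field $V = A\cdot x\pa_x + \sum B_j\pa_{y_j} + C\cdot s'\pa_{s'} + \sum D_j\pa_{u_j}$ satisfying all three tangency conditions, $\ff$-tangency forces $B_j = x\tilde B_j$ with $\tilde B_j$ smooth, and $d\phi_R$-verticality on $R$ forces the algebraic identity $\tilde B_j|_{s'=0} = A|_{s'=0}\,u_j + D_j|_{s'=0}$. Consequently $A u_j - \tilde B_j + D_j$ vanishes at $s' = 0$ and is of the form $s'\delta_j$ with $\delta_j$ smooth, which permits one to express $V$ as a $\CI(M^2_0)$-linear combination of the four lifted generators with coefficients $A$, $\tilde B_j$, $C + A$, $\delta_j$. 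The main potential obstacle is precisely this smoothness of $\delta_j$ at the corner $\ff \cap R$; it is enforced exactly by the compatibility of the two tangency conditions, reflecting the geometric role of the blow-up in resolving the clash between the two $\oT M$-structures at $\pa\Diag$.
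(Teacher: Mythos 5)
Your chart computations are correct and your treatment of the lifted generators, the tangency conditions, and the module identification in the projective chart $(x,y,s',u)$ mirrors the paper's first chart $(s,x',y,Y)$ (the two are the left/right reflections of each other), and your extension argument for the fibrations via $L\cong[\pa M\times M;\pa\Diag]$ is sound. However, there is a genuine gap: your chart covers a neighborhood of $\ff\cap R$ only away from $L$, its reflection covers $\ff\cap L$ away from $R$, and the product chart covers the complement of $\ff$ — none of these reaches the codimension-three corner $\ff\cap L\cap R$, which is a nonempty $(2n-1)$-dimensional corner of $M^2_0$ (the closure of the lift of $\pa M\times\pa M\setminus\pa\Diag$ meets $\ff$ there). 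You say "away from $\ff$ the product chart $(x,y,x',y')$ suffices," but this leaves a full neighborhood of the triple corner uncovered.

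This is not a cosmetic omission. Near the triple corner the fibrations of $L$ and $R$ interact in a more intricate way: their fibers are transversal on $L\cap R$ except at $\ff$, and the fibers of the front face fibration meet the other two fibrations only in boundary hypersurfaces of the fibres. The paper's proof handles this by introducing a second coordinate system $(t,s_1,s_2,Z,y)$ as in \eqref{22.9.2010.1}, writing out the fibrations in \eqref{22.9.2010.2}, and computing the lifts of the four generators explicitly in \eqref{22.9.2010.6}; the resulting span — $s_1\pa_{s_1}$, $s_2\pa_{s_2}$, $s_2\pa_{Z_j}$, $s_1(t\pa_{y_j}-\pa_{Z_j})$, $s_1(t\pa_t-t\pa_{y_1}-Z\cdot\pa_Z)$, $s_2 t\pa_t$ — must be matched against the tangency conditions at three boundary faces simultaneously. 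Your argument for $\delta_j$, which relies on dividing $Au_j-\tilde B_j+D_j$ by the single defining function $s'$, has no analogue in the corner chart, where both $s_1$ and $s_2$ (and $t$) are in play, and the compatibility of the three tangency conditions is exactly what needs to be verified. To complete the proof you would need to carry out the analogue of your computation in the corner chart, which is precisely the content of the paper's second case.
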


\begin{proof} It is only necessary to examine the geometry and vector
  fields near the front face produced by the the blow up of the diagonal
  near the boundary. Using the symmetry between the two factors, it
  suffices to consider two types of coordinate systems. The first is valid
  in the interior of the front fact and up to a general point in the interior
  of the intersection with one of the old boundary faces. The second is
  valid near a general point of the corner of the front face, which has
  fibers which are quarter spheres.

For the first case let $x,$ $y$ and $x',$ $y'$ be the same local
coordinates in two factors. The coordinates
\begin{equation}
s=x/x',\ x',\ y\Mand Y=(y'-y)/x'
\label{SeClRe.35}\end{equation}
are valid locally in $M^2_0$ above the point $x=x'=0,$ $y=y',$ up to the
lift of the old boundary $x=0,$ which becomes locally $s=0.$ The fibration
of this hypersurface is given by the constancy of $y$ 
and the front face is $x'=0$ with fibration also given by the constancy of
$y.$ The vector fields
$$
x\pa_x,\  x\pa_y,\ x'\pa_{x'}\Mand x'\pa_{y'}
$$
lift to
$$
s\pa_s,\ sx'\pa_y-s\pa_Y,\ x'\pa_{x'}-s\pa_s-Y\cdot\pa_Y\Mand\pa_Y.
$$
The
basis $s\pa_s,$ $sx'\pa_y,$ $x'\pa_{x'}$ and $\pa_Y$ shows that these
vector fields are locally precisely the tangent vector fields also tangent
to both fibrations.

After relabeling the tangential variables as necessary, and possibly
switching their signs, so that $y'_1-y_1>0$
is a dominant variable, the coordinate system
\begin{equation}
t=y_1'-y_1,\ s_1=\frac{x}{y'_1-y_1},\
s_2=\frac{x'}{y'_1-y_1},\ Z_j=\frac{y'_j-y_j}{y'_1-y_1},\ j>1,\ y
\label{22.9.2010.1}\end{equation}
can be used at a point in the corner of the front face. The three boundary
hypersurfaces are locally $s_1=0,$ $s_2=0$ and $t=0$ and their respective
fibrations are given in these coordinates by 
\begin{equation}
\begin{gathered}
s_1=0,\ y=\text{const.,}\\
s_2=0,\ y'_1=y_1+t=\text{const.},\ y'_j=y_j+tZ_j=\text{const.},\ j>1,\\
t=0,\ y=\text{const.}
\end{gathered}
\label{22.9.2010.2}\end{equation}
Thus, the intersections of fibres of the lifted left or right faces with
the front face are precisely boundary hypersurfaces of fibres there. On the
other hand within the intersection of the lifted left and right faces the
respective fibres are transversal except at the boundary representing the
front face. The lifts of the basis of the zero vector fields is easily
computed:
\begin{equation}
\begin{gathered}
x\pa_x\longmapsto s_1\pa_{s_1},\\
x\pa_{y_1}\longrightarrow
-s_1t\pa_t+s_1^2\pa_{s_1}+s_1s_2\pa_{s_2}+s_1Z\cdot\pa_Z+s_1t\pa_{y_1},\\
x\pa_{y_j}\longmapsto s_1t\pa_{y_j}-s_1\pa_{Z_j},\\
x'\pa_{x'}\longmapsto s_2\pa_{s_2},\\
x'\pa_{y'_1}\longrightarrow s_2t\pa_t-s_2s_1\pa_{s_1}-s_2^2\pa_{s_2}-s_2Z\cdot\pa_{Z},\\
x'\pa_{y'_j}\longmapsto s_2\pa_{Z_j}.
\end{gathered}
\label{22.9.2010.6}\end{equation}
The span, over $\CI(M^2_0),$ of these vector fields is also spanned by
$$
s_1\pa_{s_1},\ s_2\pa_{s_2},\ s_2\pa_{Z_j},\ j>1,\ s_1(t\pa_{y_j}-\pa_{Z_j}),\ j>1,
\ s_1(t\pa_t-t\pa_{y_1}-Z\cdot\pa_Z)\Mand s_2t\pa_t.
$$
These can be seen to locally
span the vector fields tangent to all three boundaries and corresponding
fibrations, proving the Lemma.
\end{proof}

With $\phi=\{\phi_L,\phi_R,\beta_0\}$
the collection of boundary fibrations, we denote by $\Tph M^2_0$ the
bundle whose smooth sections are exactly the smooth vector fields
tangent to all boundary fibrations. Thus, the content of the preceding lemma
is that
$$
\beta^*(\oT M\times \oT M)=\Tph M^2_0.
$$

These fibrations allow the reconstruction of $\Tb^*M^2_0$ as a blow up of
the lift of $\To M\times\To M$ to $M^2_0.$ It is also useful, for more
precise results later on, to consider the `partially b-' cotangent bundle
of $M^2_0$, $\Tbff^*M^2_0$; this is the dual space of the partially
b-tangent bundle, $\Tbff M^2_0,$ whose smooth sections are smooth vector
fields on $M^2_0$ which are tangent to the old boundaries, but not
necessarily to the front face, $\ff$. Thus, in coordinates
\eqref{SeClRe.35}, $s\pa_s$, $\pa_{x'}$, $\pa_y$ and $\pa_Y$ form a basis
of $\Tbff M^2_0$, while in coordinates \eqref{22.9.2010.1}, $s_1\pa_{s_1}$,
$s_2\pa_{s_2}$, $\pa_t$, $\pa_{Z_j}$ and $\pa_y$ do so.  Let
$$
\iota^2_{0\bl}:\Tph M^2_0\to\Tb M^2_0,\ \iota^2_{0\bl,\ff}:\Tph M^2_0\to\Tbff M^2_0
$$
be the inclusion maps.

\begin{lemma}\label{SeClRe.34} The annihilators, in the lift of $\To M\times\To
M$ to $M^2_0,$ of the null space of either $\iota^2_{0\bl}$ or
$\iota^2_{0\bl,\ff}$ over the old boundaries, as in
  Lemma~\ref{SeClRe.32}, form transversal embedded p-submanifolds. After
  these are blown up, the closure of the annihilator of the nullspace of
$\iota^2_{0\bl}$, resp.\ $\iota^2_{0\bl,\ff}$, over
  the interior of the front face of $M^2_0$ is a p-submanifold, the
  subsequent blow up of which produces a manifold with corners with three
  `old' boundary hypersurfaces; the complement of these three hypersurfaces
  is canonically diffeomorphic to $\Tb^*M^2_0,$ resp.\ $\Tbff^* M^2_0$.
\end{lemma}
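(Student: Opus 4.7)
The plan is to reduce to the single-space identification of Lemma~\ref{SeClRe.26} by working in the two explicit coordinate systems of Lemma~\ref{SeClRe.32}: the coordinates \eqref{SeClRe.35} near a generic point of the intersection of the front face with one old boundary, and the corner coordinates \eqref{22.9.2010.1}. A basis of $\Tph M^2_0$ in these patches is given by the lifts of the zero vector fields computed in \eqref{22.9.2010.6}; from these one reads off that the null space of $\iota^2_{0\bl}$ over the left old boundary $\{s_1=0\}$ is spanned by the lifts of $x\pa_{y_j}$, which as elements of $\Tb M^2_0$ carry an explicit factor of $s_1$, and likewise for the right boundary $\{s_2=0\}$. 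For $\iota^2_{0\bl,\ff}$ an additional null direction appears over the front face $\{t=0\}$, coming from the terms involving $t\pa_t$, since this basis element of $\Tb$ is replaced by $\pa_t$ in $\Tbff$.

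Dualizing, the annihilator $A_L$ of the null space of $\iota^2_{0\bl}$ over the left old boundary is cut out in $\Tph^* M^2_0 = \beta_0^*(\To M\times\To M)$ by $s_1=0$ together with the vanishing of the fiber variables dual to the lifts of $x\pa_{y_j}$; analogously for the annihilator $A_R$ over the right boundary. Since $A_L$ and $A_R$ are defined by disjoint sets of fiber coordinates combined with disjoint boundary defining functions, they are p-submanifolds meeting transversally, and each meets the lift of the front face $\{t=0\}$ transversally. The iterated blow-up of $A_L$ and $A_R$ in $\Tph^* M^2_0$ is therefore well-defined, the order of blow-up being immaterial, and produces a manifold with corners. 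For $\iota^2_{0\bl,\ff}$ the corresponding annihilators have the same structure and the same argument applies.

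Next, inside this blown-up space one examines the closure, over the interior of the front face of $M^2_0$, of the annihilator of the null space of $\iota^2_{0\bl}$ (respectively $\iota^2_{0\bl,\ff}$). In the explicit fiber coordinates this closure is again a coordinate submanifold---the vanishing of the dual variables corresponding to the null directions at $\{t=0\}$ identified above---and it meets the lifts of $A_L, A_R$ and of the left and right boundaries transversally, by independence of the defining functions. It is therefore a p-submanifold of the already blown-up space, and its blow-up is well-defined; the three `old' boundary hypersurfaces of the resulting manifold with corners are the lifts of the three boundary hypersurfaces of $\Tph^* M^2_0$ lying over $\{s_1=0\}$, $\{s_2=0\}$, and $\{t=0\}$ respectively.

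Finally, in the complement of these three old hypersurfaces the asserted diffeomorphism with $\Tb^* M^2_0$ (respectively $\Tbff^* M^2_0$) is verified by a coordinate computation in each patch, patterned on the identification in Lemma~\ref{SeClRe.26}: each blow-up of an annihilator is accompanied by the rescaling $\mu\mapsto\eta=\mu/x$ familiar from the single-space case, and the three rescalings together invert the rescalings that distinguished $\Tph$ from $\Tb$ (respectively $\Tbff$). The main technical obstacle is the iterated blow-up near the corner of $M^2_0$ where both old boundaries and the front face meet; here transversality of all the submanifolds involved, together with the commutativity of transversal blow-ups, reduces the analysis to the local coordinate computations of the preceding paragraphs.
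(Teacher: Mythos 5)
Your proposal is correct and takes essentially the same approach as the paper: reduce to the coordinate patches of Lemma~\ref{SeClRe.32}, read off the null bundles of the inclusion (as in \eqref{22.9.2010.8}), dualize to get the annihilator p-submanifolds (as in \eqref{22.9.2010.10} and \eqref{22.9.2010.10b}), blow up the two annihilators over the old boundaries first (noting they meet transversally), observe that only after introducing $\tilde\mu=\mu/s_1$, $\tilde\mu'=\mu'/s_2$ does the closure of the annihilator over the front face become a p-submanifold, then blow it up and verify the reconstruction of $\Tb^*M^2_0$ (resp.\ $\Tbff^*M^2_0$) directly. One small caution in your first paragraph: the criterion ``carries an explicit factor of $s_1$'' does not by itself isolate the null space --- the lift of $x\pa_x$ is $s_1\pa_{s_1}$, which also carries the factor $s_1$ but is a nonvanishing basis section of $\Tb M^2_0$; what matters is that the coefficients relative to the $\Tb$-basis $\{s_1\pa_{s_1},s_2\pa_{s_2},t\pa_t,\pa_{Z_j},\pa_y\}$ vanish at $s_1=0$, as the paper's explicit formulas \eqref{22.9.2010.6} and \eqref{22.9.2010.8} make clear.
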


\begin{proof} By virtue of Lemma~\ref{SeClRe.26} and the product structure away
from the front face $\ff$ of $M^2_0$,
the statements here are trivially valid except possibly
  near $\ff$. We may again use the coordinate systems
  discussed in the proof of Lemma~\ref{SeClRe.32}. Consider the linear
  variables in the fibres in which a general point is $l x\pa_x+v\cdot
  x\pa_y+l'x'\pa_{x'}+v'\cdot x'\pa_{y'}.$

First consider the inclusion into $\Tb M^2_0$. In the interiors of $s_1=0$ and
  $s_2=0$ and the front face respectively, the null bundles of the
  inclusion into the tangent vector fields are
\begin{equation}
\begin{gathered}
l=l'=0,\ v'=0,\\
l=l'=0,\ v=0,\\
\begin{aligned}
l s\pa_s+v(sx'\pa_y-s\pa_Y)+l'(x'\pa_{x'}-s\pa_s-Y\pa_Y)+v'\pa_Y=0\Mat x'=0,\ s>0\\
\Longleftrightarrow
l=l'=0,\ v'=sv.
\end{aligned}
\end{gathered}
\label{22.9.2010.8}\end{equation}

The corresponding annihilator bundles, over the interiors of the boundary
hypersurfaces of $M^2_0,$ in the dual bundle, with basis 
\begin{equation}
\la \frac{dx}x+\mu \frac{dy}x+\la '\frac{dx'}{x'}+\mu'\frac{dy'}{x'}
\label{22.9.2010.9}\end{equation}
are therefore, as submanifolds,
\begin{equation}
\begin{gathered}
s_1=0,\ \mu =0,\\
s_2=0,\ \mu '=0\text{ and}\\
x'=0,\ \mu+s\mu'=0\Mor t=0,\ s_2\mu +s_1\mu'=0.
\end{gathered}
\label{22.9.2010.10}\end{equation}
Here the annihilator bundle over the front face is given with respect to both
the coordinate system \eqref{SeClRe.35} and \eqref{22.9.2010.1}.

Thus, the two subbundles over the old boundary hypersurfaces meet
transversally over the intersection, up to the corner, as claimed and so
can be blown up in either order. In the complement of the lifts of the old
boundaries under these two blow ups, the variables $\mu/s_1$ and
$\mu'/s_2$ become legitimate; in terms of these the subbundle over the
front face becomes smooth up to, and with a product decomposition at, all
its boundaries. Thus, it too can be blown up. That the result is a
(painful) reconstruction of the b-cotangent bundle of the blown up manifold
$M^2_0$ follows directly from the construction.

It remains to consider the inclusion into $\Tbff M^2_0$. The only changes are
at the front face, namely the third line of \eqref{22.9.2010.8} becomes
\begin{equation}\begin{gathered}
l s\pa_s+v(sx'\pa_y-s\pa_Y)+l'(x'\pa_{x'}-s\pa_s-Y\pa_Y)+v'\pa_Y=0\Mat x'=0,\ s>0\\
\Longleftrightarrow
l=l',\ v'=sv+l'Y.
\end{gathered}\end{equation}
Correspondingly the third line of \eqref{22.9.2010.10} becomes
\begin{equation}\begin{gathered}\label{22.9.2010.10b}
x'=0,\ \lambda+\lambda'+\mu' Y=0,\ \mu+s\mu'=0\\
\Mor t=0,\ s_2\mu +s_1\mu'=0,
\ s_2(\lambda+\lambda')+\mu'_1+\sum_{j\geq 2}\mu'_j Z_j=0.
\end{gathered}\end{equation}
The rest of the argument is unchanged, except that the conclusion is that
$\Tbff^* M^2_0$ is being reconstructed.
\end{proof}

Note that for any manifold with corners, $X,$ the b-cotangent
bundle of any boundary hypersurface $H$ (or indeed any boundary face) includes
naturally as a subbundle $\Tb^*H\hookrightarrow \Tb^*_HX.$

\begin{lemma}
The Hamilton vector field of $g_\delta$ lifts from either the left or the right factor
of $M$ in $M^2$ to a smooth vector field, tangent to the boundary hypersurfaces,
on $\Tb^* M^2_0$, as well as on $\Tbff^* M^2_0$,
still denoted by $\sH^L_{g_\delta}$, resp.\ $\sH^R_{g_\delta}$. Moreover,
$\sH^L_{g_\delta}=\rho_L V^L$, $\sH^R_{g_\delta}=\rho_R V^R$, where
$V_L$, resp.\ $V_R$ are smooth vector fields tangent to all hypersurfaces
except the respective
cotangent bundles over the left, resp.\ right, boundaries, to which
they are transversal, and where $\rho_L$ and $\rho_R$ are defining functions of the
respective cotangent bundles over these boundaries.
\end{lemma}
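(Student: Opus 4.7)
The plan is to reduce to the unperturbed hyperbolic case treated in Lemma~\ref{SeClRe.19} and then lift the Hamilton vector field to $M^2_0$ by explicit computation in the coordinate charts \eqref{SeClRe.35} and \eqref{22.9.2010.1}, invoking Lemma~\ref{SeClRe.32} for the passage from $\oT M\times\oT M$ to $\Tph M^2_0$ and Lemma~\ref{SeClRe.34} to further blow down to $\Tb^*M^2_0$ and $\Tbff^*M^2_0$. Since $\chi_\delta(z)H$ is $\CI$ up to $\pa\Bnc$, the dual metric function $p_\delta = g_\delta^*/2$ is a fibrewise quadratic, smooth perturbation of $p_0$ on $\To M$, so the computation producing \eqref{SeClRe.22} in the proof of Lemma~\ref{SeClRe.19} applies directly to give, in one factor, ${}^{\bl}\sH_{p_\delta} = xV_\delta$ with $V_\delta$ smooth, non-vanishing, tangent to $\{x=0\}$ on $\Tb^*M$, and $\CI$-close to $V_0$.

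The first step on $M^2$ is to lift $\sH^L_{p_\delta}$ to the pullback of $\To M\times\To M$ to $M^2_0$, i.e.\ the dual of $\Tph M^2_0$. In chart \eqref{SeClRe.35} one uses $x=sx'$ and $\pa_y|_{\text{old}} = \pa_y - (x')^{-1}\pa_Y$ to obtain $x\pa_x \mapsto s\pa_s$ and $x\pa_{y_j} \mapsto sx'\pa_{y_j} - s\pa_{Y_j}$; in the corner chart \eqref{22.9.2010.1} the table \eqref{22.9.2010.6} shows every $x\pa_{\text{left}}$ lifts with an explicit $s_1$-factor. Combined with smoothness of the fiber terms in the base variables, this gives smoothness and tangency to all three boundary fibrations on the pullback bundle.

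To descend to $\Tb^*M^2_0$ and $\Tbff^*M^2_0$, I would apply the blow-up of Lemma~\ref{SeClRe.34}. Over the left face this is the blow-up of $\{s_1=0,\ \mu=0\}$, after which $\tilde\mu = \mu/s_1$ is smooth. Substituting and using $\pa_{s_1}|_\mu = \pa_{s_1}|_{\tilde\mu} - (\tilde\mu/s_1)\pa_{\tilde\mu}$ cancels $\lambda\mu\pa_\mu$ against the $\tilde\mu$-part of $\lambda s_1\pa_{s_1}|_\mu$, while $h_0(y,\mu) = s_1^2 h_0(y,\tilde\mu)$ (quadratic in $\mu$) supplies two factors of $s_1$ to $-h_0\pa_\lambda$. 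Collecting terms yields
\[
\sH^L_{p_\delta} = s_1\bigl[\lambda\,\pa_{s_1} + O(s_1)\bigr] = \rho_L V^L,
\]
the bracketed expression being smooth on $\Tb^*M^2_0$, equal to $\lambda\pa_{s_1}$ at $s_1=0$, hence non-vanishing and transversal to the left face on $\{p_\delta=1\}$ (where $\lambda \neq 0$), and tangent to $\{t=0\}$ and $\{s_2=0\}$ by inspection of \eqref{22.9.2010.6}. The statement on $\Tbff^*M^2_0$ follows from the same computation, since the difference between $\Tb^*$ and $\Tbff^*$ concerns only whether $x'\pa_{x'}$ or $\pa_{x'}$ is a local basis vector, and $\sH^L_{p_\delta}$ is tangent to the front face in either sense; the statement for $\sH^R_{g_\delta}$ follows by symmetry.

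The main difficulty is the coordinate bookkeeping near the corner of the front face, where all three defining functions must be tracked simultaneously and the single $\rho_L$-factor must be isolated from both the base components, which carry it explicitly by \eqref{22.9.2010.6}, and from the fiber term $-h_0\pa_\lambda$, which only becomes manifestly $s_1^2$-divisible after the blow-up of Lemma~\ref{SeClRe.34} introduces $\tilde\mu$ as a smooth coordinate.
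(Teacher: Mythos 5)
Your strategy is the paper's own: use the explicit formula for $\boH_{p_0}$, lift it to $\To M\times\To M$ and then to $M^2_0$, and track the lifts through the blow-ups of Lemma~\ref{SeClRe.34} to descend to $\Tb^*M^2_0$; the cancellation you observe (the $\tilde\mu\,\pa_{\tilde\mu}$ contributions killing each other, and the $|\mu|^2=s_1^2|\tilde\mu|^2$ giving the quadratic factor in $-h_0\pa_\lambda$) is exactly what produces \eqref{SeClRe.42} in the paper. However, there are three genuine gaps.

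\textbf{The final blow-up over the front face is missing.} Lemma~\ref{SeClRe.34} reconstructs $\Tb^*M^2_0$ by \emph{three} blow-ups: the two annihilator bundles over the old boundaries, then the annihilator over the interior of the front face. You carry out only the first (over the left face) and then assert that the bracketed expression is smooth on $\Tb^*M^2_0$ and tangent to $\{t=0\}$. But the claim that $V^L$ is tangent to the cotangent bundle over $\mcf$ requires the computation \emph{after} blowing up $\{t=0,\ \tilde\mu+\tilde\mu'=0\}$ and introducing $\tilde\nu=(\tilde\mu+\tilde\mu')/t$; this is the paper's \eqref{SeClRe.42b}. Inspection of \eqref{22.9.2010.6} before that blow-up does not settle the matter, since the passage to $\tilde\nu$ alters the $\pa_{\tilde\mu}$ and $\pa_{\tilde\mu'}$ components nontrivially.

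\textbf{The $\Tbff^*$ case is not ``the same computation.''} The difference between $\Tb^*M^2_0$ and $\Tbff^*M^2_0$ is not merely whether $x'\pa_{x'}$ or $\pa_{x'}$ is a basis vector: the center of the front-face blow-up changes from $\{x'=0,\ \mu+s\mu'=0\}$ to $\{x'=0,\ \lambda+\lambda'+\mu'\cdot Y=0,\ \mu+s\mu'=0\}$ (compare \eqref{22.9.2010.10} and \eqref{22.9.2010.10b}), so there is an extra coordinate $\sigma=(\lambda+\lambda'+\mu'\cdot Y)/x'$ (resp.\ $\tilde\sigma$) to be tracked, and the resulting vector fields \eqref{SeClRe.41b}, \eqref{SeClRe.42c} differ from \eqref{SeClRe.41}, \eqref{SeClRe.42b}. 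This is a separate verification, not a restatement.

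\textbf{The perturbation is underjustified.} ``$\CI$-close to $V_0$'' is not the property that makes the argument work; closeness could be lost after dividing by a boundary defining function. What is actually needed, and what the paper proves, is the quantitative vanishing \eqref{SeClRe.46}: because $H$ is smooth as a $2$-cotensor (not a $0$-cotensor), the perturbation of $dp$ on $\To M$ vanishes to second order in $x$, so the perturbation of the Hamilton vector field is as in \eqref{SeClRe.45}, and after the lifts and three blow-ups one gets $\sH^L_p-\sH^L_{p_0}=\rho_L\rho_{\ff}U^L$ with $U^L$ smooth, \eqref{SeClRe.48}. It is this precise vanishing order, propagated through the blow-ups, that guarantees the transversality in \eqref{SeClRe.41} and \eqref{SeClRe.42b} is unchanged; you identify the right hypothesis ($H$ smooth up to $\pa\Bnc$) but do not draw the needed conclusion from it. (Minor: your single-factor claim that $V_\delta$ is ``tangent to $\{x=0\}$'' contradicts \eqref{SeClRe.22}, where the leading term $\lambda\pa_x$ is transversal; you state the correct transversality later, but the two sentences are inconsistent.)
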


\begin{proof}
Inserting the explicit form of the Euclidean metric, the Hamilton vector
field in \eqref{SeClRe.17} becomes 
\begin{equation}
\boH_{p_0}=\la( x\p_x+\mu\p_\mu)-|\mu|^2\p_\la + x\mu\cdot\pa_y.
\label{SeClRe.37}\end{equation}
Consider the lift of this vector field to the product, $\To M\times\To M,$
from left and right, and then under the blow up of the diagonal in the
boundary. In the coordinate systems \eqref{SeClRe.35} and \eqref{22.9.2010.1} 
\begin{equation}
\begin{gathered}
\boH_{p_0}^L=\lambda (s\pa_s+\mu\pa_\mu)-|\mu|^2\pa_{\la}+
s\mu(x'\pa_y-\pa_Y)\\
\boH_{p_0}^R=\lambda'(x'\pa_{x'}-s\pa_s-Y\pa_Y+\mu'\pa_{\mu'})-|\mu'|^2\pa_{\la'}+\mu'\pa_Y
\\
\begin{aligned}
\boH_{p_0}^L=\la(s_1\pa_{s_1}+\mu\pa_{\mu})&
-|\mu|^2\pa_{\la}+s_1\sum\limits_{j\ge2}\mu_j(t\pa_{y_j}-\pa_{Z_j})\\
+&s_1\mu_1(t\pa_{y_1}-t\pa_t+s_1\pa_{s_1}+s_2\pa_{s_2}
+\sum\limits_{j\ge2}Z_j\pa_{Z_j})
\end{aligned}\\
\begin{aligned}
\boH_{p_0}^R=\la'(s_2\pa_{s_2}+\mu'\pa_{\mu'})&
-|\mu'|^2\pa_{\la'}+s_2\sum\limits_{j\ge2}\mu'_j\pa_{Z_j}\\
+&s_2\mu'_1(t\pa_t-s_1\pa_{s_1}-s_2\pa_{s_2}-\sum\limits_{j\ge2}Z_j\pa_{Z_j}).
\end{aligned}
\end{gathered}
\label{SeClRe.38}\end{equation}
Note that the bundle itself is just pulled back here, so only
the base variables are changed.

Next we carry out the blow ups of Lemma~\ref{SeClRe.34}. The centers of
blow up are given explicitly, in local coordinates, in \eqref{22.9.2010.10}, with the
third line replaced by \eqref{22.9.2010.10b} in the case of $\iota^2_{0\bl,\ff}$.
We are only interested in the behaviour of the lifts of the vector
fields in \eqref{SeClRe.38} near the front faces introduced in the blow
ups.

Consider $\iota^2_{0\bl}$ first.
For the first two cases there are two blow-ups, first of $\mu=0$ in
$s=0$ and then of $\mu+s\mu'=0$ in $x'=0.$ Thus, near the front face of the
first blow up, the $\mu$ variables are replaced by $\tilde\mu=\mu/s$ and
then the center of the second blow up is $\tilde\mu+\mu'=0,$ $x'=0.$ Thus,
near the front face of the second blow up we can use as coordinates $s,$
$x',$ $\mu'$ and $\nu=(\tilde\mu+\mu')/x',$ i.e.\ substitute
$\tilde\mu=-\mu'+x'\nu.$ In the coordinate patch \eqref{SeClRe.38} the
lifts under the first blow up are
\begin{equation}
\begin{gathered}
\boH_{p_0}^L\longmapsto
\lambda s\pa_s-s^2|\tilde\mu|^2\pa_{\la}+
s^2\tilde\mu(x'\pa_y-\pa_Y)
\\
\boH_{p_0}^R\longmapsto
\lambda'(x'\pa_{x'}-s\pa_s-Y\pa_Y+\tilde\mu\pa_{\tilde\mu}+
\mu'\pa_{\mu'})-|\mu'|^2\pa_{\la'}+
\mu'\pa_Y.
\end{gathered}
\label{SeClRe.39}\end{equation}

Thus under the second blow up, the left Hamilton vector field lifts to
\begin{equation}
\begin{gathered}
\boH_{p_0}^L\longmapsto sT,\
T=\lambda \pa_s-s|\tilde\mu|^2\pa_{\la}+
s\tilde\mu(x'\pa_y-\pa_Y)
\end{gathered}
\label{SeClRe.41}\end{equation}
where $T$ is transversal to the boundary $s=0$ where $\lambda\not=0.$ 

A similar computation near the corner shows the lifts of the two Hamilton
vector fields under blow up the fibrations of $s_1=0$ and $s_2=0$ in terms
of the new coordinates $\tilde\mu=\mu/s_1$ and $\tilde\mu'=\mu'/s_2$ to be 
\begin{equation}
\begin{gathered}
\begin{aligned}
\boH_{p_0}^L=&\la s_1\pa_{s_1}-s_1^2|\tilde\mu|^2\pa_{\la}+s_1^2\sum\limits_{j\ge2}\tilde\mu_j(t\pa_{y_j}-\pa_{Z_j})\\
+&s_1^2\tilde\mu_1
(t\pa_{y_1}-t\pa_t+s_1\pa_{s_1}-\tilde\mu\pa_{\tilde\mu}+s_2\pa_{s_2}
-\tilde\mu'\pa_{\tilde\mu'}+\sum\limits_{j\ge2}Z_j\pa_{Z_j}),
\end{aligned}\\
\begin{aligned}
\boH_{p_0}^R=&\la's_2\pa_{s_2}-s_2^2|\tilde\mu'|^2\pa_{\la'}+s_2^2\sum\limits_{j\ge2}\tilde\mu'_j\pa_{Z_j}\\
+&s_2^2\tilde\mu'_1
(t\pa_t-s_1\pa_{s_1}+{\tilde\mu}\pa_{\tilde\mu}-s_2\pa_{s_2}+
\tilde\mu'\pa_{\tilde\mu'}
-\sum\limits_{j\ge2}Z_j\pa_{Z_j}).
\end{aligned}
\end{gathered}
\label{SeClRe.42}\end{equation}
The final blow up is that of $t=0$, $\tilde\mu+\tilde\mu'=0$, near the front
face of this blow-up replacing $(t,\tilde\mu,\tilde\mu')$ by $(t,\tilde\mu,\tilde\nu)$,
$\tilde\nu=(\tilde\mu+\tilde\mu')/t$, as valid coordinates (leaving the others
unaffected). Then the vector fields above become
\begin{equation}
\begin{gathered}
\boH_{p_0}^L=s_1\tilde T^L,\ \boH_{p_0}^R=s_2\tilde T^R,\\
\begin{aligned}
\tilde T^L=&\la \pa_{s_1}-s_1|\tilde\mu|^2\pa_{\la}+s_1\sum\limits_{j\ge2}\tilde\mu_j(t\pa_{y_j}-\pa_{Z_j})\\
+&s_1\tilde\mu_1
(t\pa_{y_1}-t\pa_t+s_1\pa_{s_1}-\tilde\mu\pa_{\tilde\mu}+s_2\pa_{s_2}
+\sum\limits_{j\ge2}Z_j\pa_{Z_j}),
\end{aligned}\\
\begin{aligned}
\tilde T^R=&\la'\pa_{s_2}-s_2|\tilde\mu'|^2\pa_{\la'}+s_2\sum\limits_{j\ge2}\tilde\mu'_j\pa_{Z_j}\\
+&s_2\tilde\mu'_1
(t\pa_t-s_1\pa_{s_1}+{\tilde\mu}\pa_{\tilde\mu}-s_2\pa_{s_2}
-\sum\limits_{j\ge2}Z_j\pa_{Z_j}).
\end{aligned}
\end{gathered}
\label{SeClRe.42b}\end{equation}
Thus both left and right Hamilton vector fields are transversal to the
respective boundaries after a vanishing factor is removed, provided
$\lambda,$ $\lambda '\not=0.$

The final step is to show that the same arguments apply to the perturbed
metric. First consider the lift, from left and right, of
the perturbation to the Hamilton vector field arising from the
perturbation of the metric. By assumption, the perturbation $H$ is a
2-cotensor which is smooth up to the boundary. Thus, as a perturbation of
the dual metric function on $\To M$ it vanishes quadratically at the
boundary. In local coordinates near a boundary point it follows that the
perturbation of the differential of the metric function is of the form 
\begin{equation}
dp-dp_0=x^2(a\frac{dx}x+b dy+cd\lambda +ed\mu)
\label{SeClRe.46}\end{equation}
From \eqref{SeClRe.44} it follows that the perturbation of the
Hamilton vector field is of the form 
\begin{equation}
\sH_{p}-\sH_{p_0}=x^2(a'x\pa_x+b'x\pa_y+c'\pa_\lambda +e'\pa_\mu)
\label{SeClRe.45}\end{equation}
on $\To M.$ Lifted from the right or left factors to the product and then
under the blow-up of the diagonal to $M^2_0$ it follows that in the
coordinate systems \eqref{SeClRe.35} and \eqref{22.9.2010.1}, the
perturbations are of the form 
\begin{equation}
\begin{gathered}
\sH_{p}^L-\sH_{p_0}^L=s^2(x')^2V^L,\ \sH_{p}^R-\sH_{p_0}^R=(x')^2V^R,\ V^L,\ V^R\in\Vb,\\
\sH_{p}^L-\sH_{p_0}^L=s_1^2t^2W^L ,\ \sH_{p}^R-\sH_{p_0}^R=s_2^2t^2W^R,\ W^R,\ W^R\in\Vb
\end{gathered}
\label{SeClRe.47}\end{equation}
where $\Vb$ denotes the space of smooth vector fields tangent to all
boundaries. Since the are lifted from the right and left factors, $V^L$ and
$V^R$ are necessarily tangent to the annihilator submanifolds of the right
and left boundaries. It follows that the vector fields $sx'V^L$ and $x'V^R$
are tangent to both fibrations above a coordinate patch as in
\eqref{SeClRe.35} and $s_1tV^L$ and $s_2tV^R$ are tangent to all three
annihilator submanifolds above a coordinate patch
\eqref{22.9.2010.1}. Thus, after the blow ups which reconstruct
$\Tb^*M^2_0,$ the perturbations lift to be of the form
\begin{equation}
\sH_{p}^L-\sH_{p_0}^L=\rho _L\rho _{\ff}U^L,\ \sH_{p}^R-\sH_{p_0}^R=\rho _R\rho _{\ff}U^R
\label{SeClRe.48}\end{equation}
where $U^L$ and $U^R$ are smooth vector fields on $\Tb^*M^2_0.$

From \eqref{SeClRe.48} it follows that the transversality properties in
\eqref{SeClRe.41} and \eqref{SeClRe.42b} persist.

Now consider $\iota^2_{0\bl,\ff}$. First, \eqref{SeClRe.39} is unchanged, since
the annihilators on the `old' boundary faces are the same in this case.
In particular, we still have $\tilde\mu=\mu/s$ as one of our coordinates
after the first blow up; the center of the second blow up is then $x'=0$,
$\lambda+\lambda'+\mu'\cdot Y=0$, $\tilde\mu+\mu'=0$.
Thus,
near the front face of the second blow up we can use as coordinates $s,$
$x',$ $\mu'$ and $\sigma=(\lambda+\lambda'+\mu'\cdot Y)/x'$,
$\nu=(\tilde\mu+\mu')/x',$ i.e.\ substitute
$\tilde\mu=-\mu'+x'\nu$, i.e.\ $\mu=-\mu's+x's\nu$,
and $\lambda=-\lambda'-\mu'\cdot Y+x'\sigma$.
Thus under the second blow up, the left Hamilton vector field lifts to
\begin{equation}
\begin{gathered}
\boH_{p_0}^L\longmapsto sT',\
T'=\lambda \pa_s+\mu\cdot(-\nu\pa_\sigma+x'\pa_y-\pa_Y),
\end{gathered}
\label{SeClRe.41b}\end{equation}
so $T'$ is transversal to the boundary $s=0$ where $\lambda\not=0.$ 

In the other coordinate chart, again, \eqref{SeClRe.42} is unchanged since the
annihilators on the `old' boundary faces are the same.
The final blow up is that of
$$
t=0,\ \tilde\mu+\tilde\mu'=0,
\ \lambda+\lambda'+\tilde\mu'_1+\sum_{j\geq 2}\tilde\mu'_j Z_j=0,
$$
near the front
face of this blow-up replacing $(t,\tilde\mu,\tilde\mu',\lambda,\lambda')$
by $(t,\tilde\mu,\tilde\nu,\lambda,\tilde\sigma)$,
$$
\tilde\nu=(\tilde\mu+\tilde\mu')/t,\ \tilde\sigma=(\lambda+\lambda'+\tilde\mu'_1+\sum_{j\geq 2}\tilde\mu'_j Z_j)/t,
$$
as valid coordinates (leaving the others
unaffected). Then the vector fields above become
\begin{equation}
\begin{gathered}
\boH_{p_0}^L=s_1\hat T^L,\ \boH_{p_0}^R=s_2\hat T^R,\\
\begin{aligned}
\hat T^L=&\la \pa_{s_1}-s_1|\tilde\mu|^2\pa_{\la}+s_1\sum\limits_{j\ge2}\tilde\mu_j(t\pa_{y_j}-\pa_{Z_j}-\tilde\nu_j\pa_{\tilde\sigma})\\
+&s_1\tilde\mu_1
(t\pa_{y_1}-t\pa_t+s_1\pa_{s_1}+\tilde\sigma\pa_{\tilde\sigma}-\tilde\nu\pa_{\tilde\nu}
+s_2\pa_{s_2}-\tilde\nu_1\pa_{\tilde\sigma}
+\sum\limits_{j\ge2}Z_j\pa_{Z_j}),
\end{aligned}\\
\begin{aligned}
\hat T^R=&\la'\pa_{s_2}+s_2\sum\limits_{j\ge2}\tilde\mu'_j\pa_{Z_j}\\
+&s_2\tilde\mu'_1
(t\pa_t-s_1\pa_{s_1}+{\tilde\mu}\pa_{\tilde\mu}-s_2\pa_{s_2}-\tilde\sigma\pa_{\tilde\sigma}
-\sum\limits_{j\ge2}Z_j\pa_{Z_j}).
\end{aligned}
\end{gathered}
\label{SeClRe.42c}\end{equation}
Again, both left and right Hamilton vector fields are transversal to the
respective boundaries after a vanishing factor is removed, provided
$\lambda,$ $\lambda '\not=0.$ The rest of the argument proceeds as above.
\end{proof}

\begin{prop}\label{SeClRe.23}\label{distance} The differential of the
  distance function for the perturbed metric $g_\delta,$ for sufficiently
  small $\delta,$ on $M=\Bnc$ defines a global smooth Lagrangian
  submanifold $\Lambda_\delta,$ of $\Tb^*(M\times_0M),$ which is a smooth
  section outside the lifted diagonal and which lies in $\Tb^*\ff$ over the
  front face, $\ff(M^2_0)$ and in consequence there is a unique geodesic
  between any two points of $\Bo,$ no conjugate points and \eqref{blowd}
  remains valid for $\dist_{\delta}(z,z').$
\end{prop}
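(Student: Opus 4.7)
The approach is perturbative from $\delta = 0$, where Lemma~\ref{dist0} gives the conclusion explicitly. First, I would construct $\Lambda_\delta$ intrinsically as the joint flow-out, under the commuting left and right Hamilton vector fields $\sH^L_{g_\delta}$ and $\sH^R_{g_\delta}$, of the conormal sphere bundle to the lifted diagonal $\Diag_0$ intersected with the joint characteristic variety, regarded as a submanifold of $\Tb^*M^2_0$. By the preceding lemma these vector fields are smooth on $\Tb^*M^2_0$, tangent to every boundary hypersurface, and factor as $\sH^L_{g_\delta} = \rho_L V^L$, $\sH^R_{g_\delta} = \rho_R V^R$, with $V^L, V^R$ transversal to the left and right cotangent boundary faces respectively and tangent to $\ff$. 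Hence the flow-out extends smoothly up to all three boundary faces: at the left (resp.\ right) face the flow simply halts because of the vanishing factor $\rho_L$ (resp.\ $\rho_R$), while along the front face the flow preserves $\Tb^*\ff \subset \Tb^*_{\ff} M^2_0$, since (as one reads from the explicit formulae in the previous lemma's proof) $V^L$ and $V^R$ annihilate the $d\rho_\ff/\rho_\ff$ component and the initial conormal manifold already lies in $\Tb^*\ff$ over $\ff$.

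The base case $\delta = 0$ is read off from Lemma~\ref{dist0}: the differential of $\dist_0 + \log(\rho_L \rho_R)$ is a smooth one-form on $M^2_0 \setminus \Diag_0$, and $-d\rho_L/\rho_L$, $-d\rho_R/\rho_R$ are smooth sections of $\Tb^*M^2_0$ with no $d\rho_\ff/\rho_\ff$ component. Thus $d\dist_0$ defines a smooth global Lagrangian section $\Lambda_0$ of $\Tb^*(M^2_0 \setminus \Diag_0)$ whose restriction over $\ff$ lies in $\Tb^*\ff$. For $\delta < \delta_0$ small, the Hamilton vector fields $\sH^{L/R}_{g_\delta}$ are $C^\infty$-close to their hyperbolic counterparts on $\Tb^*M^2_0$, uniformly up to the boundary (by \eqref{SeClRe.48}), so the flow-out $\Lambda_\delta$ is a small $C^\infty$ perturbation of $\Lambda_0$ away from a fixed neighborhood of $\Diag_0$; near $\Diag_0$, $\tfrac12\dist_\delta^2$ is constructed directly from the exponential map, which is stable under small smooth metric perturbations. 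Being a graph over the compact base $M^2_0 \setminus \Diag_0$ is an open condition, so $\Lambda_\delta$ remains a global smooth section for $\delta$ sufficiently small; the closed condition $\Lambda_\delta|_\ff \subset \Tb^*\ff$ persists by the preservation property above.

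Uniqueness of geodesics between points of $\Bno$ and the absence of conjugate points follow from the section property, since folds in the projection would create multiple sheets over the base. Finally, since $\Lambda_\delta$ is a smooth section of $\Tb^*(M^2_0 \setminus \Diag_0)$ lying in $\Tb^*\ff$ over $\ff$, it is the graph $dF_\delta$ of a function $F_\delta$ whose expansion in b-coordinates near the left and right faces has the form $a_L\,d\rho_L/\rho_L + a_R\,d\rho_R/\rho_R + \omega$, with $\omega$ a smooth one-form and $F_\delta$ itself smooth up to $\ff$; the characteristic condition $|dF_\delta|^2_{g_\delta} = 1$ evaluated at $\rho_L = 0$ forces $a_L|_{\rho_L = 0} = -1$, and symmetrically at the right face, so integration yields \eqref{blowd} for $\dist_\delta$. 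The main obstacle is the global section statement: local transversality comes for free from the previous lemma, but ruling out folds globally relies on the explicit base case $\delta = 0$ combined with the compactness of $M^2_0$, which is precisely why smallness of $\delta$ is indispensable.
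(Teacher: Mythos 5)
Your approach is essentially the paper's: both construct $\Lambda_\delta$ as the flow-out, under the lifted left and right Hamilton vector fields, of the (unit) conormal bundle of the lifted diagonal, using the factorizations $\sH^L = \rho_L V^L$, $\sH^R = \rho_R V^R$ and their transversality from the preceding lemma, then compare with $\delta=0$ and invoke smallness of $\delta$. The transversality reading of \eqref{blowd} (coefficient of $d\rho_L/\rho_L$ forced to $-1$) and the persistence of $\Lambda_\delta|_\ff\subset\Tb^*\ff$ also match the paper. One small structural difference: the paper actually carries the flow out first in $\Tbff^*M^2_0$, where over the interior of $\ff$ the bundle is literally $T^*$, so that smoothness of $\dist_\delta$ up to $\ff$ is immediate, and then records the $\Tb^*$ statement as an alternative needing a short extra argument; you work only in $\Tb^*$ and implicitly take that extra step when you assert $F_\delta$ is smooth up to $\ff$.

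There is one point worth flagging as an overclaim. You assert the Hamilton vector fields are ``$C^\infty$-close \dots uniformly up to the boundary (by \eqref{SeClRe.48})''. Equation \eqref{SeClRe.48} gives the structural factorization $\sH_p^L - \sH_{p_0}^L = \rho_L\rho_\ff U^L$ with $U^L$ smooth, but $U^L$ depends on $\delta$ through the cutoff $\chi_\delta(z)=\chi((1-|z|)/\delta)$, whose full derivatives blow up like $\delta^{-k}$. The prefactor $\rho_L\rho_\ff\sim x^2\lesssim\delta^2$ on the support compensates only to first order: the perturbation is $O(\delta^2)$ in $C^0$ and $O(\delta)$ in $C^1$, but its $C^2$ norm is $O(1)$ and higher norms diverge. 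This is why the paper is careful to claim only smallness ``in supremum norm''. Your openness argument for the graph property needs (and has) $C^1$-smallness, so the conclusion stands, but the ``$C^\infty$-close'' citation of \eqref{SeClRe.48} is not accurate. (Also $M^2_0\setminus\Diag_0$ is not compact; your openness argument should be phrased on the complement of a fixed small neighborhood of $\Diag_0$, on which it is compact, with the exponential-map argument — which you do mention — handling a neighborhood of $\Diag_0$.)
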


\begin{proof} For the unperturbed metric this already follows from
  Lemma~\ref{dist0}. We first reprove this result by integrating the
  Hamilton vector fields and then examine the effect of the metric
  perturbation. Thus we first consider the lift of the Hamilton vector
  field of the hyperbolic distance function from $\To M,$ from either the
  left of the right, to $\Tbff^*M^2_0,$ using the preceding lemma.

Although the global regularity of the Lagrangian which is the graph of the
differential of the distance is already known from the explicit formula in
this case, note that it also follows from the form of these two vector
fields. The initial manifold, the unit conormal bundle to the diagonal,
becomes near the corner of $M^2$ the variety of those 
\begin{equation}
\xi(dx-dx')+\eta(dy-dy')\text{ such that
}\xi^2+|\eta|^2=\frac1{2x^2},\ x=x'>0,\ y=y'.
\label{SeClRe.49}\end{equation}
In the blown up manifold $M^2_0$ the closure is smooth in $\Tb^*M^2_0$ and
is the bundle over the lifted diagonal given in terms of local coordinates
\eqref{SeClRe.35} by  
\begin{equation}
\la\frac{ds}s+\mu dY,\ \la^2+|\mu|^2=\frac12,\ s=1,\ Y=0;
\label{SeClRe.50}\end{equation}
the analogous statement also holds in $\Tbff^*M^2_0$ where one has the `same'
expression.
Using the Hamilton flow in $\Tbff^*M^2_0$, we deduce that
the flow out is
a global smooth  submanifold, where smoothness includes up to all
boundaries, of $\Tbff^*M^2_0$, and is also globally a graph away from the lifted
diagonal, as follows from the explicit form of the vector fields. Note that
over the interior of the front face, $\Tbff^*M^2_0$ is just the standard
cotangent bundle, so smoothness of the distance up to the front face
follows.
Over the
left and right boundaries the Lagrangian lies in $\lambda =1$ and $\lambda
'=1$ so the form \eqref{blowd} of the distance follows.

The analogous conclusion can also be obtained by using the flow in
$\Tb^*M^2_0$. In this setting, we need that
over the front face \eqref{SeClRe.50} is contained in the image of $\Tb^*\ff$ to which
both lifted vector fields are tangent. Thus it follows that the flow out is
a global smooth  submanifold, where smoothness includes up to all
boundaries, of $\Tb^*M^2_0$ which is contained in the image of $\Tb^*\ff$
over $\ff(M^2_0).$ Again, it is globally a graph away from the lifted
diagonal, as follows from the explicit form of the vector fields. Over the
left and right boundaries the Lagrangian lies in $\lambda =1$ and $\lambda
'=1$ so the form \eqref{blowd} of the distance follows.

For small $\delta$ these
perturbation of the Hamilton vector fields are also small in supremum norm
and have the same tangency properties at the boundaries used above to
rederive \eqref{blowd}, from which the Proposition follows.
\end{proof}

\section{The semiclassical double space}\label{semiclassical-double-space}

In this section we construct the semiclassical double space, $M_{0,\semi}$, which
will be the locus of our parametrix construction. To motivate the construction, we
recall that
Mazzeo and the first author \cite{Mazzeo-Melrose:Meromorphic} have analyzed the resolvent $R(h,\sigma),$ defined in equation \eqref{scresolvent}, for $\sigma/h\in\Cx$, though the construction is
not uniform as $|\sigma/h|\to\infty$. They achieved this
by constructing the Schwartz kernel of the parametrix $G(h,\sigma)$ to
$R(h,\sigma)$ as a conormal distribution defined on the manifold
$$
M_0=\BB^{n+1} \times_0 \BB^{n+1}
$$
defined in 
\eqref{zero-blow-up}, see also figure \ref{fig1}, with meromorphic dependence on
$\sigma/h$. 

The manifold $ \BB^{n+1}\times \BB^{n+1}$ is a $2n+2$ dimensional manifold with corners.  It contains two boundary components of codimension one, denoted as in \cite{Mazzeo-Melrose:Meromorphic} by
\begin{equation*}
\p_1^l( \BB^{n+1} \times \BB^{n+1})= \p \BB^{n+1} \times \BB^{n+1} \text{ and }
\p_1^r( \BB^{n+1} \times \BB^{n+1})=  \BB^{n+1} \times \p \BB^{n+1},
\end{equation*}
which have a common boundary
 $\p_2( \BB^{n+1}\times \BB^{n+1})=\p \BB^{n+1} \times \p \BB^{n+1}.$ The lift of
$\p_1^r( \BB^{n+1} \times \BB^{n+1})$ to $M_0$, which is the closure of
$$
\beta^{-1}(\p_1^r( \BB^{n+1} \times \BB^{n+1})\setminus \p_2( \BB^{n+1}\times \BB^{n+1})),
$$
with
$\beta:M_0\to\BB^{n+1}\times\BB^{n+1}$ the blow-down map, will be called the right face and denoted by $\mcr.$ Similarly,
the lift of $\p_1^l( \BB^{n+1} \times \BB^{n+1})$ will be called the left face and denoted by $\mcl.$   The lift of $\Diag \cap \p_2( \BB^{n+1}\times \BB^{n+1})$, which is its
inverse image under $\beta$, will be called the front face $\mcf,$ see figure \ref{fig1}.

  We briefly recall the definitions of their classes of pseudodifferential operators, and refer the reader to
\cite{Mazzeo-Melrose:Meromorphic} for full details. First they define the class $\Psi_0^m(\BB^{n+1})$ which consists of those pseudodifferential operators of order 
$m$ whose Schwartz kernels lift under the blow-down map $\beta$ defined in \eqref{zero-blow-up} to a distribution which is conormal (of order $m$) to the lifted diagonal and  vanish to infinite order at all faces, with the exception of the front face, up to
which it is $\CI$ (with values in conormal distributions).
Here, and elsewhere in the paper, we trivialized the right density bundle using
a zero-density; we conveniently fix this as
$|dg_\delta(z')|$. Thus, the Schwartz kernel of $A\in\Psi_0^m(\Bn)$ is
$K_A(z,z') |dg_\delta(z')|$, with $K_A$ as described above, so in particular is $\CI$
up to the front face.

  It then becomes necessary to introduce another class of operators whose kernels are singular at the right and left faces.  This class will be denoted by $\Psi_0^{m,a,b}(\BB^{n+1}),$ $a,b \in \CC.$  An operator $P\in \Psi_0^{m,a,b}(\BB^{n+1})$ if it can be written as a sum $P=P_1+P_2,$ where $P_1\in \Psi_0^m(X)$ and the Schwartz kernel $K_{P_2}|dg_\delta(z')|$
of the operator $P_2$ is such that $K_{P_2}$
lifts under $\beta$ to a conormal distribution which is smooth up to the front face, and which satisfies the following conormal regularity with respect to the other faces
\begin{equation}
\mcv_b^{k}  \beta^* K_{P_2} \in \rho_L^a \rho_R^b L^\infty(\BB^{n+1} \times_0 \BB^{n+1}), \;\ \forall \;\ k \in \mn, \label{boundary-regularity}
\end{equation}
where $\mcv_b$ denotes the space of vector fields on $M_0$ which are tangent to the right and left faces.

 Next we define the semiclassical blow-up of
$$
\Bn\times\Bn\times[0,1)_h,
$$
and  the corresponding classes of pseudodifferential operators associated with it that will be used in the construction of the parametrix.  The semiclassical 
double space
is constructed in two steps. First, as in \cite{Mazzeo-Melrose:Meromorphic}, we blow-up the intersection of the diagonal $\Diag \times [0,1)$ with $\p \BB^{n+1} \times \p \BB^{n+1} \times [0,1).$  Then we blow-up the intersection of the lifted diagonal times $[0,1)$ with $\{h=0\}.$  We define the manifold with corners
\begin{equation}
M_{0,\semi}=[\BB^{n+1}\times \BB^{n+1} \times [0,1]_{h }; \pa \Diag\times[0,1);\Diag_0\times\{0\}]. \label{scblowup1}
\end{equation}
See Figures~\ref{figscblow-up1} and \ref{figscblow-up2}.  We will denote the blow-down map
\begin{equation}
\beta_{\semi}: M_{0,\semi} \longrightarrow \BB^{n+1} \times \BB^{n+1} \times [0,1). \label{betasc}
\end{equation}
\begin{figure}[int1]
\epsfxsize= 3.5in
\centerline{\epsffile{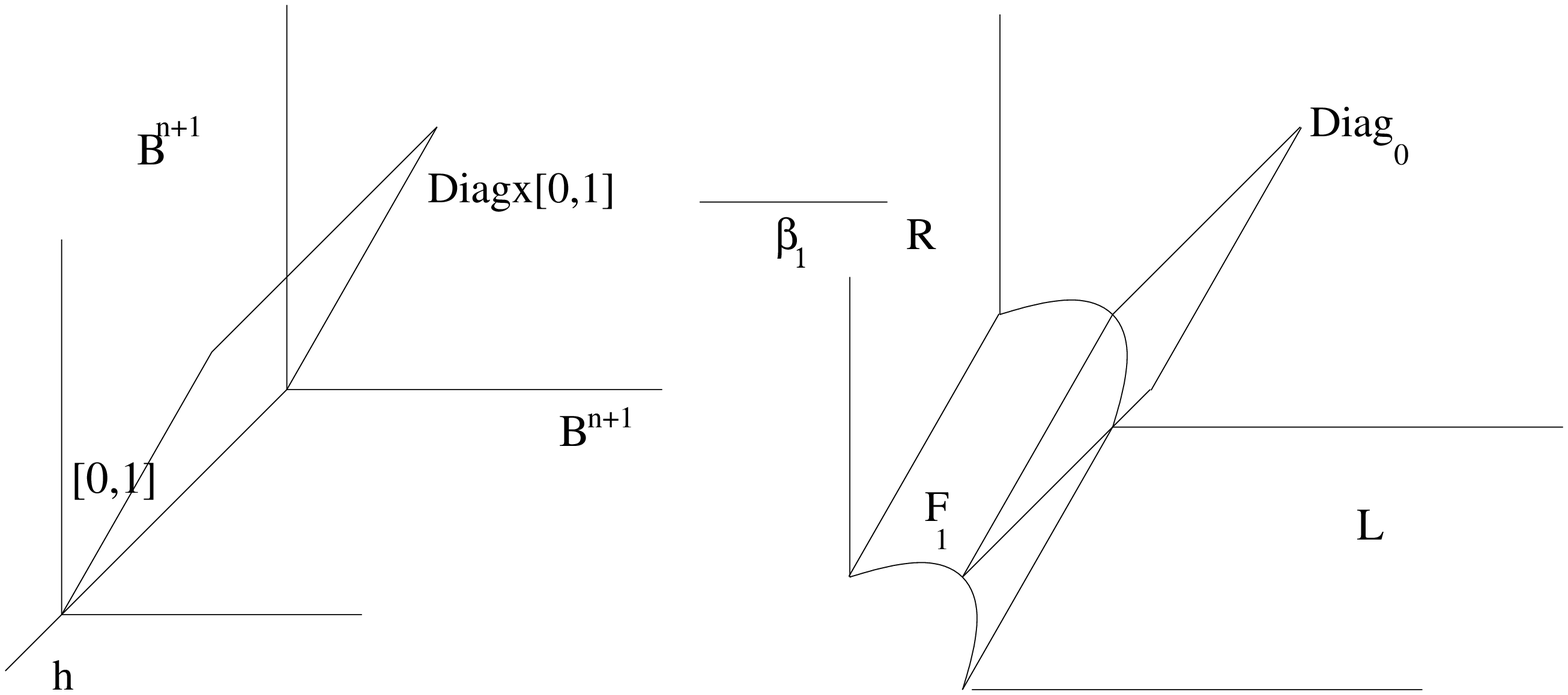}}
\caption{The stretched product $\BB^{n+1}\times_0 \BB^{n+1}\times [0,1).$}
\label{figscblow-up1}
\end{figure}

\begin{figure}[int1]
\epsfxsize= 3.5in
\centerline{\epsffile{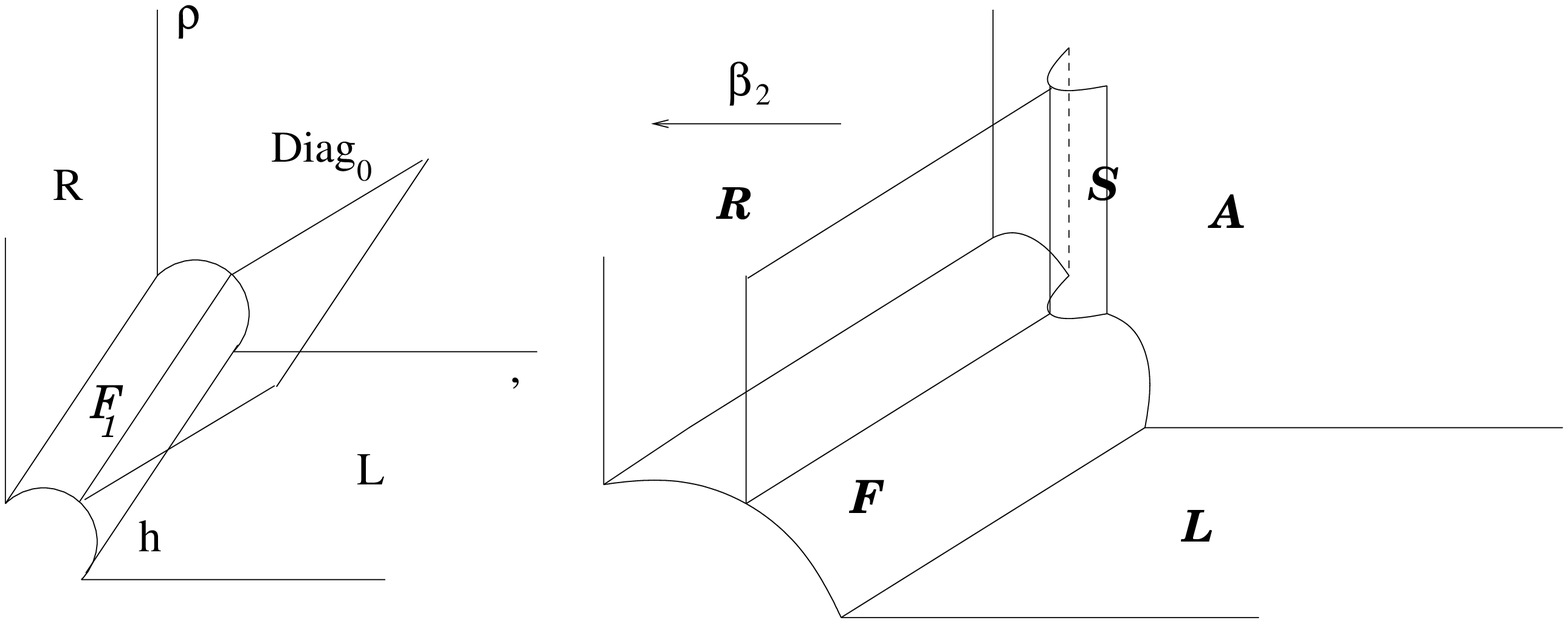}}
\caption{The semiclassical blown-up space $M_{0,h}$ obtained by blowing-up $\BB^{n+1}\times_0 \BB^{n+1} \times [0,1)$  along  $\Diag_0\cap \times \{h=0\}.$ }
\label{figscblow-up2}
\end{figure}

As above, we can define the right and left semiclassical faces as  the  lift of 
\begin{gather*}
\p_1^l\left( \BB^{n+1} \times \BB^{n+1} \times [0,1)\right)= \p \BB^{n+1} \times \BB^{n+1}  \times [0,1)\text{ and } \\ 
\p_1^r\left( \BB^{n+1} \times \BB^{n+1} \times [0,1))\right)=  \BB^{n+1} \times \p \BB^{n+1} \times [0,1),
\end{gather*}
by the blow-down map $\beta_{\semi}.$  We will denote the lift of the diagonal under the map $\beta_{\semi}$ by $\Diag_{\semi},$ i.e.
\begin{gather*}
\Diag_{\semi}= \text { the closure of }  \beta_{\semi}^{-1}\left( \Diag \times (0,1)\setminus (\Diag \cap (\p \Bn \times \p \Bn))\right).
\end{gather*}

The lift of $\pa \Diag \times [0,1)$ will be called the zero front face $\mcf,$ while the boundary obtained by the blow-up of $\Diag_0 \times [0,1]$ along $\Diag_0\times \{0\}$ will be called the semiclassical front face $\mcs.$  The face which is obtained by the lift of $\Bn \times \Bn \times \{0\}$ is the semiclassical face, and will be denoted by $\mca.$

We wish to find a parametrix such that $P(h,\sigma)$ acting on the left
produces the identity plus an error which  vanishes to high enough order on the right and left
faces, $\mcr$ and $\mcl,$ and to infinite order at the zero-front face $\mcf,$  the semiclassical front face $\mcs$ and the semiclassical face $\mca.$  Thus 
the error term is bounded as an operator acting between weighted $L^2(\Bn)$ spaces and its norm goes to zero as $h\downarrow 0.$

As in \cite{Mazzeo-Melrose:Meromorphic} we define the class of semiclassical pseudodifferential operators in two steps.  First we define the space
$P\in\Psi_{0,\semi}^m(\Bn)$ which consists of operators whose kernel
$$
K_P(z,z',h)\,|dg_\delta(z')|
$$
lifts to a conormal distribution of order $m$ to the lifted diagonal and
vanishes to infinite order at all faces, except the zero front face, up to which it is
$\CI$ (with values in conormal distributions)
and the semiclassical front face, up to which it is $h^{-n-1}\CI$ (with values in
conormal distributions). We then define the space
\begin{gather}
\mck^{ a,b,c}(M_{0,\semi})=\{ K \in L^{\infty}(M_{0,\semi}): \mcv_b^m K \in \rho_L^a \rho_A^b \rho_R^c \rho_S^{-n-1}L^\infty(M_{0,\semi}), \;\ m\in \NN \}, \label{sc-con}
\end{gather} 
where $\mcv_b$ denotes the Lie algebra of vector fields which are tangent to $\mcl,$ $\mca$ and $\mcr.$  Again, as in \cite{Mazzeo-Melrose:Meromorphic}, we define the space $\Psi_{0,\semi}^{m,a,b,c}(\Bn)$ as the operators $P$ which can be expressed in the form $P=P_1+P_2,$ with $P_1\in \Psi_{0,\semi}^{m}(\Bn)$ and  the kernel 
$K_{P_2}\,|dg_\delta(z')|$ of $P_2$ is such
$\beta_{\semi}^* K_{P_2} \in \mck^{ a,b,c}(M_{0,\semi}).$

\section{A semiclassical parametrix for the resolvent in dimension three}\label{3D-parametrix}

In this section we construct a parametrix for the  resolvent  $R(h,\sigma)$ defined in \eqref{scresolvent} in dimension three.  We do this case separately because it   
 is much simpler than in the general case, and one does not have to perform the semiclassical blow-up.  Besides,
we will also need part of this construction in the general case. More precisely, if $n+1=3,$  and the metric $g$ satisfies the hypotheses of Proposition \ref{distance}, we will use Hadamard's method to construct the leading asymptotic term of the parametrix
$G(\sigma,h)$ at the diagonal, and the top two terms of the semiclassical
asymptotics.  Our construction takes place on
$(\BB^3\times_0\BB^3)\times [0,1)_h$,  instead of its semiclassical
blow up, i.e.\ the blow up of the zero-diagonal at $h=0$, as described above.  This is made possible by a coincidence, namely
that in three dimensions, apart from an explicit exponential factor, the
leading term in the asymptotics lives on $\BB^3\times_0 \BB^3 \times [0,1)$. However, to obtain further
terms in the asymptotics would require the semiclassical blow up, as
it will be working in higher dimensions. For example, this method only give bounds for the resolvent of  width 1, while the more general construction gives the bounds on any strip.

We recall that in three dimensions the resolvent of the Laplacian in hyperbolic space, $\Delta_{g_0},$ 
$R_0(\sigma)=(\Delta_{g_0}-\sigma^2-1)^{-1}$
has a holomorphic continuation to $\mc$ as an operator from functions vanishing to infinite order at
$\p \Bc$ to distributions in $\Bc.$  
 The Schwartz kernel of $R_0(\sigma)$ is given by
\begin{gather}
R_0(\sigma,z,z')= \frac{e^{-i\sigma r_0}}{4\pi \sinh r_0}, \label{resol0}
 \end{gather}
 where  $r_0=r_0(z,z')$ is the geodesic distance between $z$ and $z'$ with respect to the metric
 $g_{0},$ see for example \cite{Mazzeo-Melrose:Meromorphic}.

 Since there are no conjugate points for the geodesic flow of $g,$  for each 
$z'\in \Bo,$ the exponential map for the metric $g,$
$\exp_{g}: T_{z'} \Bo \longrightarrow \Bo,$  is a  global diffeomorphism.  Let 
$(r,\theta)$ be geodesic normal coordinates for $g$ which are valid in $\Bo\setminus \{z'\};$ 
$r(z,z')=:d(z,z')$ is the distance function for the metric $g.$ Since $r(z,z')$ is globally defined, $g$ is a small perturbation of $g_0$ and the kernel of $R_0(\sigma)$ is given by \eqref{resol0}, it is reasonable to seek a parametrix of  $R(h,\sigma)$ which has  kernel of the form
\begin{gather}
G(h,\sigma,z,z')=e^{-i \frac{\sigma}{h} r}  h^{-2}U(h,\sigma,z,z'), \label{parametrix}
\end{gather}
 with  $U$  properly chosen.    
 
 We now reinterpret this as a semiclassical Lagrangian distribution to relate it to the results
 of Section~\ref{distance-function}. Thus, $-\sigma r=-\sigma d(z,z')$ is the phase function for semiclassical distributions corresponding
 to the backward left flow-out of the conormal bundle of the diagonal inside
the characteristic set of $2p_\ep-\sigma^2$. This flowout is the same as
the forward right flow-out of the conormal bundle of the diagonal, and is
also the dilated version, by a factor of $\sigma$, in the fibers of the
cotangent bundle, of the flow-out of
in the characteristic set of $2p_\ep-1$, which we described in detail
in Section~\ref{distance-function}.

In view of the results of  Section~\ref{distance-function},
for the characteristic set of $2p_\ep-1$ (the general case of
$2p_\ep-\sigma^2$ simply gives an overall additional factor of
$\sigma$ due to the dilation), the
 lift $\beta^*\pa_r$ of $\pa_r$ to $\Bc\times_0\Bc$ satisfies
 \begin{equation}\label{eq:pa_r-V_eps}
 \beta^*\pa_r=\widetilde{\Pi_\eps}_*V_\eps^L,
 \end{equation}
 and thus is a $\CI$ vector field on
 $(\Bc\times_0\Bc)\setminus\Diag_0$
 which is tangent to all boundary faces, and at the left face
 $L=B$,
 \begin{equation}\label{eq:pa_r-at-left-face}
 \pa_r=-\mathsf{R}_L+W_L,
 \end{equation}
 where
 $\mathsf{R}_L$ is the radial vector field corresponding to the left face, and
 $W_L\in\rho_L\Vb((\Bc\times_0\Bc)\setminus\Diag_0)$.
 
 It is convenient to
 blow up $\Diag_0$ and lift $\beta^*\pa_r$ further to this space.
 We thus define  
 $\Bc \times_1 \Bc$ to be the manifold obtained from $\Bc \times_0 \Bc$ by blowing-up the diagonal $\Diag_0$ as shown in Fig. \ref{fig2}. Let $\beta_d : \Bc\times_1 \Bc\longmapsto \Bc \times_0 \Bc$ denote the blow-down map and let $\beta_d\circ \beta=\beta_{0d}.$
 The vector field $\beta_{0d}^* \p_r$ is transversal to the
new face introduced by blowing up the diagonal; it still satisfies the lifted analogue of
\eqref{eq:pa_r-at-left-face}. Moreover, integral curves of $\beta_{0d}^* \p_r$ hit
the left face $L$ away from its intersection with the right face $R$ in finite time.

 \begin{figure}[int1]
\epsfxsize= 2.0in
\centerline{\epsffile{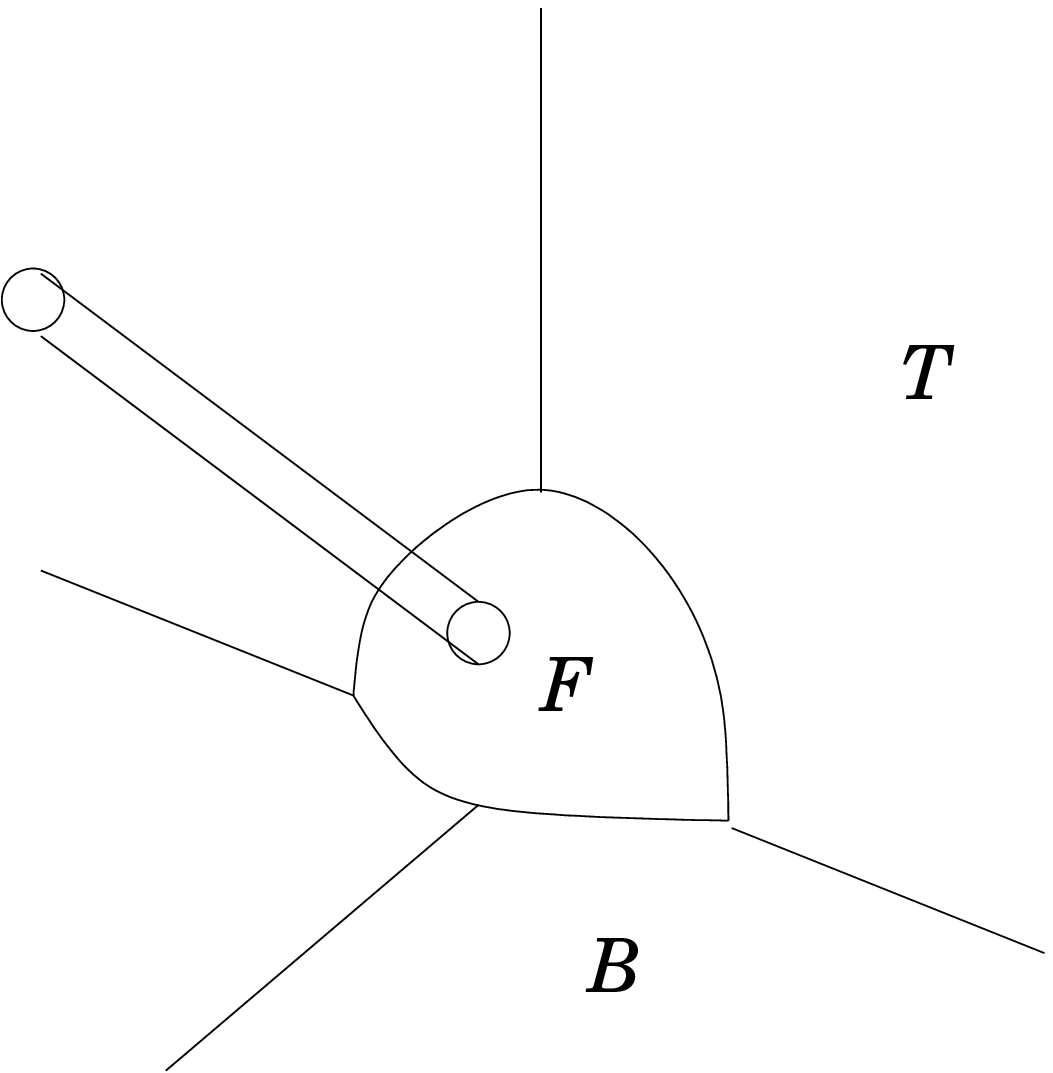}}
\caption{The manifold $\Bn \times_1 \Bn.$}
\label{fig2}
\end{figure}

In coordinates $(r,\theta)$ the metric $g$ is given by
\begin{gather}
g = dr^2 + H(r, \theta, d\theta),\label{gnorm}
\end{gather}
where $H(r,\theta,d\theta)$ is a $\CI$ 1-parameter family of  metrics on $\ms^2.$
The Laplacian with respect to $g$  in these coordinates  is given by
\begin{gather*}
\Delta_g= -\p_r^2 -V \p_r  + \Delta_H, \;\  V= \frac{1}{|g|^\ha} \p_r (|g|^\ha), 
\end{gather*}
where $|g|^\ha$ is the volume element of the metric $g$ and $\Delta_H$ is the Laplacian 
with respect to $H$ on $\ms^2.$  The volume element $|g|^\ha$ has the following expansion as $r\downarrow 0,$
\begin{gather}
|g|^{\ha}(r,\theta)= r^{2}(1+ r^2g_1(r,\theta)), \label{volume1}
\end{gather}
see for example page 144 of \cite{Gallot-Hulin-Lafontaine}. So
\begin{gather}
\Delta_g =-\p_r^2 -(\frac{2}{r}+rA) \p_r +\Delta_H \label{volume3}
\end{gather}

We want $U$  in \eqref{parametrix} to be of the form
\begin{gather}
U(h,\sigma,z,z')=U_0(\sigma,z,z')+ h U_1(\sigma,z,z'), \label{parametrix01}
\end{gather}
 and so
\begin{equation}\begin{split}
&\left(h^2(\Delta_g+x^2W-1)-\sigma^2\right) 
e^{-i\frac{\sigma}{h} r} h^{-2} U \\
&=e^{-i\frac{\sigma}{h}r}\Big( (\Delta_g+x^2 W-1) U_0+ 2i \frac{\sigma}{h} |g|^{-\oq} \p_r(|g|^\oq U_0)\\
&\qquad\qquad+
2i\sigma   |g|^{-\oq} \p_r(|g|^\oq U_1) + h (\Delta_g+x^2 W-1)U_1\Big).
\end{split}\label{parametrix1}
\end{equation}

Here the leading term in $h$ as $h\to 0$ both overall, and as far as $U_0$ is concerned, is
\begin{equation}\label{eq:0th-transport}
2i \frac{\sigma}{h} |g|^{-\oq} \p_r(|g|^\oq U_0),
\end{equation}
and the leading term as far as $U_1$ is concerned is
\begin{equation}\label{eq:1st-transport}
2i\sigma   |g|^{-\oq} \p_r(|g|^\oq U_1).
\end{equation}
In the interpretation as semiclassical Lagrangian distributions,
these are both the differential operators arising in the transport equations. For Hamiltonians
given by Riemannian metrics, these operators in the interior are well-known to be
Lie derivatives with respect to the Hamilton vector field, when interpreted as acting on half-densities.
This can also be read off directly from \eqref{eq:0th-transport}-\eqref{eq:1st-transport}, with
$|g|^\oq$ being the half-density conversion factor, $\sigma$ is due to working at energy
$\sigma$ (rather than $1$), and the factor of $2$ is due to the symbol of the Laplacian
being $2p_\eps$.

To get rid of the term in $h^{-1}$ we solve the 0th transport equation, i.e.\ we impose
\begin{gather*}
\p_r(|g|^\oq U_0)=0,
\end{gather*}
and we  choose $U_0(r,\theta)= \frac{1}{4\pi} |g(r,\theta)|^{-\oq}.$  From \eqref{volume1} we have 
\begin{gather}
|g|^{-\oq}(r,\theta)=r^{-1}(1+ r^2 g_2(r,\theta)) \text{ near } r=0. \label{expg}
\end{gather}

 Therefore, near $r=0,$ 
\begin{gather}
\Delta_g \frac{1}{4\pi} |g(r,\theta)|^{-\oq}= \delta(z,z')+ \frac{1}{4\pi}A r^{-1} +\frac{1}{4\pi}\Delta_g (r g_2).
\label{expato}
\end{gather}
This only occurs in three dimensions, and makes this construction  easier than in the general case.  In higher dimensions the power or $r$ in \eqref{expg} is
$r^{-\frac{n}{2}},$ and  does not coincide with the power of $r$ of the fundamental solution of the Laplacian, which, in dimension $n+1,$ is $r^{1-n},$ so one does not get the delta function in \eqref{expato}.

To get rid of the term independent of $h$ in \eqref{parametrix1}  in $r>0$ we solve the
first transport equation,
\begin{gather*}
2i \sigma |g|^{-\oq}\p_r(|g|^\oq U_1)+ (\Delta_g+x^2W-1)U_0=0 \text{ in } r>0,\\
U_1=0 \text{ at } r=0.
\end{gather*}

So
\begin{gather}
U_1(r,\theta)= -\frac{1}{8i\sigma \pi}|g(r,\theta)|^{-\oq} \int_0^r |g|^\oq(s,\theta) \left(\Delta_g+x^2W-1\right)|g|^{-\oq}(s,\theta) \; ds.\label{formu1}
\end{gather}
Since $|g|^\oq$ is $\CI$ up to $r=0,$ and vanishes at $r=0,$ it follows from \eqref{expato} that
 $|g|^\oq \Delta_g |g|^{-\oq}$ is $\CI$ up to $r=0.$ In particular the integrand in \eqref{formu1} is smooth up to $r=0.$ With these choices of $U_0$ and $U_1$ we obtain
\begin{gather}
\begin{gathered}
\left(h^2(\Delta_g+x^2W-1)-\sigma^2\right)  e^{-i\frac{\sigma}{h} r} h^{-2}U(h,\sigma,z,z')\\
= \delta(z,z')+
h e^{-i\frac{\sigma}{h} r}  (\Delta_g+x^2W-1) U_1(\sigma,z,z') 
 \end{gathered}\label{term1}
 \end{gather}

This gives, in principle, a parametrix  $G(h,\sigma,z,z')=e^{-i\frac{\sigma}{h} r} h^{-2}U(h,\sigma,z,z')$
in the interior of
$\Bc\times \Bc$ in the two senses that the diagonal singularity of $R(h,\sigma)$
 is solved away to leading order, which in view of the ellipticity of the operator means that the error
$$
E(h,\sigma)=\left( h^2(\Delta_g+x^2W-1)-\sigma^2\right) G(h,\sigma)-\Id
$$
is a semiclassical pseudodifferential operator of order $-1$ (in a large
calculus, i.e.\ with non-infinite order vanishing off the semiclassical
front face, which did not even appear in our calculations), and the
top two terms of the
semiclassical parametrix of $\left(h^2(\Delta_g+x^2W-1)-\sigma^2\right)^{-1}$ are also found.

In fact, our parametrix is better than this. To understand the behavior of  $G$ and the remainder 
\begin{gather}
E(h,\sigma,z,z')=h e^{-i\frac{\sigma}{h} r}(\Delta_g +x^2W-1)U_1 \label{defE}
\end{gather}
near the boundary of $\Bc\times \Bc,$  we need to analyze the behavior of $U_0$
and $U_1$ at the left and right boundary faces.
 We will do the computations for arbitrary dimensions, since we will need some of these estimates in the general case, but in this special situation we have $n=2.$

We start by noting that the asymptotics of $U_0$ and $U_1$ follow from the transport equation
which they satisfy. Indeed, much like we analyzed the flow-out of the conormal bundle of the
diagonal, we show now that
\begin{equation}\label{eq:U_0-U_1-asymp}
U_0\in \rho_D^{-1}\rho_L^{n/2}\rho_R^{n/2}\CI(\Bn\times_1\Bn),\ U_1\in R^2\rho_L^{n/2}\rho_R^{n/2}\CI(\Bn\times_1\Bn);
\end{equation}
here $\rho_D$ is the defining function of the front face of the blow-up creating $\Bn\times_1\Bn$.
Note that we have already shown this claim near this front face; the main content of the
statement is the precise behavior as $\rho_L,\rho_R\to 0$.

We start with $U_0$.
First, the conclusion away from the right face, $\rho_R=0$, follows immediately from
\eqref{eq:pa_r-at-left-face} since integral curves emanating from the lifted diagonal
hit this region at finite time, and solutions of the Lie derivative equation have this form
near the boundary. To have the analogous conclusion away from the left face, we
remark that solutions of the left transport equation automatically solve the right transport
equation; one can then argue by symmetry, or note
directly that as $-\pa_{r'}$ is the radial vector field at the right face, modulo an element
of $\rho_R\Vb(\Bn\times_0\Bn)$, and as integral curves of $\pa_{r'}$
emanating from the lifted diagonal
hit this region at finite time, and solutions of the Lie derivative equation have this form
near the boundary. It remains to treat the corner where both $\rho_L=0$ and $\rho_R=0$.
The conclusion here now follows immediately as integral curves of $\pa_r$ reach
this corner in finite time from a punctured neighborhood of this corner, and in this
punctured neighborhood we already have the desired regularity. This proves
\eqref{eq:U_0-U_1-asymp} for $U_0$.

To treat $U_1$, it suffices to prove that
\begin{equation}\label{eq:U_0-error-asymp}
(\Delta_g+x^2W-1)U_0\in R^2\rho_L^{n/2+2}\rho_R^{n/2}\CI(\Bn\times_0\Bn\setminus \Diag_0),
\end{equation}
for then
\begin{equation}\label{eq:U_1-asymp}
U_1\in R^2\rho_L^{n/2}\rho_R^{n/2}\CI(\Bn\times_0\Bn\setminus \Diag_0),
\end{equation}
by the same arguments as those giving the asymptotics of $U_0$, but now applied to
the inhomogeneous transport equation.

On the other hand, \eqref{eq:U_0-error-asymp} follows from
$$
\beta^*\pi_L^*(\Delta_g+x^2W-1)\in \Diffb^2(\Bn\times_0\Bn)
$$
with
$$
\beta^*\pi_L^*\Big((\Delta_g+x^2W-1)-(\Delta_{g_0}-1)\Big)\in R^2\rho_L^2\Diffb^2(\Bn\times_0\Bn);
$$
here $\pi_L$ is added to emphasize the lift is that of the differential operator acting on the
left factor; lifting the operator on the right factor results in an `error'
$R^2\rho_R^2\Diffb^2(\Bn\times_0\Bn)$. These two in turn follow immediately from
the form of the metric, namely $g_0-g_\ep\in x^2\CI(X;\zT^*X\otimes \zT^*X)$.

This completes the proof of \eqref{eq:U_0-U_1-asymp}, and also yields that, with
$n+1=3$,
\begin{gather}
\beta^*\left((\Delta_g+x^2W-1) U_1 \right)\in R^2\rho_L^{n/2+2}\rho_R^{n/2}\CI(\Bn\times_0\Bn\setminus \Diag_0).\label{boundrest}
\end{gather}

Therefore, in the case $n+1=3,$ we have proved the following
\begin{thm}\label{3Dparametrix-thm} There exists a pseudodifferential operator, $G(h,\sigma),$ $\sigma\not=0,$ whose kernel is of the form
\begin{gather*}
G(h,\sigma,z,z') = e^{-i\frac{\sigma}{h}}h^{-2}\left(U_0(h,\sigma,z,z')+ h U_1(h,\sigma,z,z') \right)
\end{gather*}
with $U_0$ and $U_1$ satisfying 
\eqref{eq:U_0-U_1-asymp} and such that the error
$E(h,\sigma)= P(h,\sigma)G(h,\sigma)-\Id$ is given by \eqref{defE} and satisfies
\eqref{boundrest}.
\end{thm}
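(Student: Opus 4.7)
The construction is explicit. I would set $U_0(z,z')=\frac{1}{4\pi}|g(r,\theta)|^{-1/4}$ in the geodesic normal coordinates centered at $z'$, and define $U_1$ by the integral formula \eqref{formu1} with vanishing initial data at $r=0$. With these choices, $U_0$ solves the $0$th transport equation $\partial_r(|g|^{1/4}U_0)=0$, which kills the $h^{-1}$ coefficient on the right-hand side of \eqref{parametrix1}; and $U_1$ solves the $1$st transport equation, killing the $h^0$ coefficient in $r>0$. The three-dimensional coincidence \eqref{expg}--\eqref{expato} --- namely that $|g|^{-1/4}\sim r^{-1}$ coincides with the Euclidean fundamental solution on $\RR^3$ --- ensures both that the integrand in \eqref{formu1} is smooth up to $r=0$ (so $U_1$ is well defined and vanishes quadratically at the diagonal) and that applying $\Delta_g$ to $U_0$ produces a delta at the diagonal, so the diagonal singularity of $P(h,\sigma)G(h,\sigma)-\Id$ is removed modulo the $h^1$ remainder \eqref{defE}. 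The identity \eqref{term1} is then precisely the statement $P(h,\sigma)G(h,\sigma)=\Id+E(h,\sigma)$.

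What remains is the asymptotic analysis yielding \eqref{eq:U_0-U_1-asymp} and the error estimate \eqref{boundrest}. For $U_0$, the main input is the description \eqref{eq:pa_r-V_eps}--\eqref{eq:pa_r-at-left-face}: $\beta^*\partial_r$ is a smooth vector field on $(\Bn\times_0\Bn)\setminus\Diag_0$, tangent to all boundary faces, and at the left face takes the form $-\mathsf{R}_L+W_L$ with $W_L\in\rho_L\Vb$. Integrating the $0$th transport equation along integral curves that reach the left face in finite time from a neighborhood of the lifted diagonal forces solutions to have the form $\rho_L^{n/2}$ times a smooth function up to the left face, consistent with the decay $|g_0|^{-1/4}\sim\sinh^{-n/2}r$ and the identification $e^{-r}\sim\rho_L\rho_R$ from Lemma~\ref{dist0}. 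The right face is handled symmetrically, since a solution of the left transport equation automatically solves the right one, and the corner $\rho_L=\rho_R=0$ is reached in finite time from a punctured neighborhood in which regularity is already known.

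For $U_1$ the same transport argument, now applied to the inhomogeneous equation, reduces matters to the source estimate \eqref{eq:U_0-error-asymp}. To obtain this I would use two mapping properties: lifted from the left factor, $\Delta_g+x^2W-1\in\Diffb^2(\Bn\times_0\Bn)$, while its difference from the lift of $\Delta_{g_0}-1$ lies in $R^2\rho_L^2\Diffb^2$, both consequences of $g-g_0\in x^2\CI(X;\zT^*X\otimes\zT^*X)$ and the fact that the left defining function $x$ lifts to $R\rho_L$. Combined with the $\rho_L^{n/2}\rho_R^{n/2}$ asymptotics of $U_0$ already established, these produce the extra $R^2\rho_L^2$ factor in the source, giving \eqref{eq:U_0-error-asymp}. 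A further application of the transport argument yields \eqref{eq:U_1-asymp}, and the claim \eqref{boundrest} follows.

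The main technical obstacle I expect is the precise tracking of exponents at the boundary faces. Two features must combine correctly: the tangency to the left face of the remainder $W_L$ in $\beta^*\partial_r=-\mathsf{R}_L+W_L$, which forces the radial integration to produce an exact power of $\rho_L$ rather than only a bound; and the quadratic vanishing at $\partial X$ of the metric perturbation, which supplies the extra $R^2\rho_L^2$ in the source and hence in the error. Once these are in hand, the symmetry between the left and right transport equations, together with the finite-time reachability of the corner from a punctured neighborhood, closes the argument.
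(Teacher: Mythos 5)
Your proposal reproduces the paper's argument faithfully: the same choices of $U_0$ and $U_1$ via the transport hierarchy, the three-dimensional coincidence \eqref{expg}--\eqref{expato} giving the delta function, the asymptotics at the left/right faces and corner via \eqref{eq:pa_r-at-left-face} and symmetry of the transport equations, and the source estimate \eqref{eq:U_0-error-asymp} from the lifting properties of $\Delta_g+x^2W-1$ and the quadratic vanishing of $g-g_0$. This is essentially the same proof as in Section~\ref{3D-parametrix}.
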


\section{The structure of the semiclassical resolvent }\label{resolvent-structure}\label{full-sc-parametrix}

In this section  we construct the general right semiclassical parametrix $G(h,\sigma)$ for the resolvent.  We will prove the following
\begin{thm} \label{nD-parametrix} There exists a pseudodifferential operator $G(h,\sigma),$ $\sigma\not=0,$ such that  its kernel is of the form
 \begin{gather}
 G(h,\sigma,z,z')=e^{-i \frac{\sigma r}{h}} U(h,\sigma,z,z'),
\ U\in\Psi_{0,\semi}^{-2,\frac{n}{2},-\frac{n}{2}-1,\frac{n}{2}}(\Bn). \label{kernelofg}
 \end{gather}
 and  that, using the notation of section  \ref{semiclassical-double-space},
 \begin{gather*}
 P(h,\sigma) G(h,\sigma)-\Id \in \rho_{\mcf}^\infty \rho_{\mcs}^\infty \Psi_{0,\semi}^{-\infty, \infty,\infty,\novt +i\sigh}(\Bn).
 \end{gather*}
 \end{thm}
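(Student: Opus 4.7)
The plan is to construct the Schwartz kernel of $G(h,\sigma)$ as an oscillatory function with phase $-\sigma r(z,z')/h$ and amplitude in the zero-semiclassical calculus, building the amplitude in three stages: (i) solving the diagonal singularity, (ii) solving in Taylor series at the semiclassical front face $\mcs$, and (iii) propagating along the flow-out Lagrangian $\Lambda_\delta$ of Proposition~\ref{distance}. The choice of phase is dictated by Proposition~\ref{SeClRe.23}: the graph of $d(-\sigma r)$ is exactly the flow-out, inside $p = \sigma^2$, of the conormal bundle of $\Diag_0$ under the lifted Hamilton vector field, which by the lemma preceding Proposition~\ref{distance} is $\rho_L V^L$ with $V^L$ smooth on $\Tb^* M^2_0$, tangent to all faces except the right boundary, and transversal there.

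For step (i), since $P(h,\sigma)$ is classically elliptic of order $2$ in the zero calculus, the standard elliptic parametrix construction (as in \cite{Mazzeo-Melrose:Meromorphic}, adapted semiclassically) produces a symbol $a_{-2}$ of order $-2$ conormal to $\Diag_{\semi}$, smooth up to $\mcf$ and of the stated order $h^{-n-1}$ at $\mcs$, such that $P G_0 - \Id$ lies in the small calculus of one order lower at the diagonal. Iterating in the symbol order and asymptotically summing removes the diagonal singularity entirely, producing an error in $\Psi_{0,\semi}^{-\infty,n/2,-n/2-1,n/2}(\Bn)$ (no weight gained yet off the diagonal).

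For step (ii), one works at $\mcs$ and inductively kills the error modulo $\rho_\mcs^\infty$ by solving transport equations along the lifted Hamilton flow restricted to $\mcs$; these are ordinary differential equations along the bicharacteristics on the Lagrangian, with the half-density factor $|g_\delta|^{1/4}$ absorbing the subprincipal contribution, just as in the model computation in Section~\ref{3D-parametrix}. Asymptotic summation then gives an amplitude correcting the error to $\rho_\mcs^\infty$ at the semiclassical front face, preserving smoothness up to $\mcf$. For step (iii), having eliminated the error to infinite order at $\mcs$ and $\mcf$, we propagate away from $\mcs$ by integrating further transport equations along the global vector field $V^L$; this is well-defined globally on $M_{0,\semi}$ because Proposition~\ref{SeClRe.23} tells us the Lagrangian is a smooth section of $\Tb^* M^2_0$ over $M^2_0 \setminus \Diag_0$, a graph, and by the explicit form of $V^L$ the integral curves originating near $\mcs$ reach $\mcl$ in finite time and reach $\mcr$ only asymptotically. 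The indicial analysis of the transport equation at $\mcl$ produces the weight $\rho_L^{n/2}$ claimed for $U$ (coming from the indicial root of the conjugated flow at the left face, matching the model $|g|^{-1/4}$ behavior of Section~\ref{3D-parametrix}), and standard integration along the flow yields the stated $\rho_\mca^{-n/2-1}$ behavior at the semiclassical face.

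The main obstacle is the analysis at the right face $\mcr$. Because $V^L$ is transversal to $\mcr$ and the Hamilton vector field has a nontrivial indicial operator there, each transport equation can only be solved with a specific leading weight determined by the spectral parameter; this is where the factor $\rho_R^{n/2 + i\sigma/h}$ in the error exponent appears, arising from the conjugation of $V^L$ by $e^{-i\sigma r/h}$ combined with the indicial root $n/2$ of the unperturbed Laplacian at the right boundary. Because transport along $V^L$ produces vanishing to infinite order at $\mcl$ (integration from the diagonal through the flow gives exponential decay in the defining function of $\mcl$) and at $\mca$ (by construction of the semiclassical Taylor series), the residual error is pushed entirely to $\mcr$, yielding exactly $\Psi_{0,\semi}^{-\infty,\infty,\infty,n/2 + i\sigma/h}(\Bn)$ after multiplication by $\rho_\mcf^\infty \rho_\mcs^\infty$. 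Verifying these asymptotics up to the corners $\mcl \cap \mcf$, $\mcr \cap \mcf$, and $\mcs \cap \mcf$ requires the second coordinate chart of Lemma~\ref{SeClRe.32} and the transversality statements in \eqref{SeClRe.41}, \eqref{SeClRe.42b}, and this corner analysis, rather than the individual steps, is where the construction is most delicate.
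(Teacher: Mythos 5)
Your outline of the construction is wrong at the key step. In your step (ii) you claim to kill the error at the semiclassical front face $\mcs$ ``by solving transport equations along the lifted Hamilton flow restricted to $\mcs$,'' treating these as ODEs along bicharacteristics as in the 3D warm-up. But the error $E_0$ produced by the diagonal parametrix step is conormal to the diagonal and compactly supported there, not a Lagrangian distribution with an oscillatory factor; transport equations (first-order ODEs along the flow) cannot convert it into one. What the paper actually does at $\mcs$ is invert the \emph{normal operator} $N_{\mcs}(Q_\semi)=-\partial_{X_\hbar}^2+\Delta_{H_\eps}(D_{Y_\hbar})-\sigma^2$, a constant-coefficient elliptic PDE (the Euclidean Laplacian minus $\sigma^2$ after a linear change of variables), using the \emph{analytic continuation} of the Euclidean resolvent $R_0$. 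It is this resolvent that manufactures the $e^{-i\sigma\langle Z_\semi\rangle}$ tail from the compactly supported error — this is precisely the semiclassical intersecting-Lagrangian mechanism of Melrose--Uhlmann, and it cannot be replaced by propagation along a flow. The paper's unnumbered lemma about $R_0 M_1 R_0 M_2\cdots R_0 M_N R_0$ (with $M_j$ differential operators with polynomial coefficients), reducing the iterate to $\partial_\sigma^{N+m}R_0$ on compactly supported data, is essential because in the analytic continuation regime ($\im\sigma\geq 0$) one cannot naively iterate $R_0$: each application produces exponentially growing outputs, and the lemma is what establishes that the iteration nevertheless has the stated analytic continuation and the correct decay orders $\langle w\rangle^{-n/2+N+m}$. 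Your proposal contains no substitute for this lemma. Transport equations (ODEs along $V^L$) do appear, but only in the subsequent step removing the error at the face $\mca$, after the Euclidean resolvent step has already produced the phase $e^{-i\sigma r/h}$; you have conflated the two.

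A second genuine omission: you give no mechanism for removing the error at the zero front face $\mcf$, which is a separate step in the paper (construction of $G_3$). After the $\mcs$ and $\mca$ steps the error still has a nontrivial Taylor series at $\mcf$; the paper kills it order by order by inverting the zero normal operator $N_\mcf(Q_\semi)$ — identified with the hyperbolic Laplacian — using Proposition~6.15 of \cite{Mazzeo-Melrose:Meromorphic}, and it is this step (together with the final indicial/power-series argument $G_4$ at $\mcl$) that produces the asserted $\rho_\mcf^\infty$ vanishing and the order $\novt+i\sigh$ at $\mcr$. Your claim that the residual error is ``pushed entirely to $\mcr$'' after the transport equations is an assertion without a supporting argument; the paper must do genuine work at $\mcf$. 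Finally, a smaller point: the error after the diagonal parametrix step (i) lies in the small calculus $\Psi_{0,\semi}^{-\infty}(\Bn)$ supported near $\Diag_\semi$ (so trivially vanishing to infinite order at $\mcl$, $\mcr$, $\mca$), not in $\Psi_{0,\semi}^{-\infty,n/2,-n/2-1,n/2}$ as you state; those orders only appear later in the construction.
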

Now, $r^2$ is a $\CI$ function on the zero double space away from the
left and right faces and in a quadratic
sense it defines the diagonal non-degenerately; $r$ has
an additional singularity at the zero-diagonal. Correspondingly
$\left(\frac{\sigma r}{h}\right)^2$ is $\CI$ on $M_{0,\semi}$ away from $\mcl$, $\mcr$
and $\mca$ and
defines the lifted diagonal non-degenerately in a quadratic sense;
$\frac{\sigma r}{h}$ has an additional singularity at the zero diagonal. In particular,
$e^{-i \frac{\sigma r}{h}}$ is $\CI$ on $M_{0,\semi}$ away from $\mcl$, $\mcr$,
$\mca$ and the lifted diagonal, $\Diag_{\semi}$; at $\Diag_{\semi}$
it has the form of $1$ plus
a continuous conormal function vanishing there. Thus, its presence in any
compact subset of $M_{0,\semi}\setminus(\mcl\cup\mcr\cup\mca)$ is not
only artificial, but introduces an irrelevant singularity at $\Diag_{\semi}$, so
it is better to think of $G$ as
 \begin{equation*}\begin{split}
 G(h,\sigma,z,z')&=G'+e^{-i \frac{\sigma r}{h}} U'(h,\sigma,z,z'),\\
&G'\in\Psi_{0,\semi}^{-2},\ U'\in\Psi_{0,\semi}^{-\infty,\frac{n}{2},-\frac{n}{2}-1,\frac{n}{2}}, 
 \end{split}\end{equation*}
 where $G'$ is supported near $\Diag_{\semi}$ (i.e.\ its support intersects only the
boundary hypersurfaces $\mcs$ and $\mcf$, but not the other boundary hypersurfaces),
and $U'$ vanishes near $\Diag_{\semi}$.

Indeed,
the first step in the construction of $G$ is to construct a piece of $G'$, namely to
find $G_0\in\Psi_{0,\semi}^{-2}$ such that $E_0=P(h,\sigma)G_0(h,\sigma)-\Id$
has no
singularity at the lifted diagonal, $\Diag_{\semi},$ with $G_0$ (hence $E_0$)
supported in a neighborhood of $\Diag_{\semi}$ in $M_{0,\semi}$ that only intersects the boundary of $M_{0,\semi}$ at $\mcs$ and $\mcf$. Thus,
\begin{equation}
 P(h,\sigma)G_0-\Id= E_0, \;\ \text{ with } E_0 \in \Psi_{0,\semi}^{-\infty}(\Bn). \label{consg0}
\end{equation}

Since $P(h,\sigma)$ is elliptic in the interior of $\Bn,$ the construction of $G_0$ in the interior follows from the standard Hadamard parametrix construction.  We want to do this construction uniformly up to the zero front face and the semiclassical front face in the blown-up manifold $M_{0,\semi}.$ We notice that the lifted diagonal intersects the boundary of $M_{0,\semi}$ transversally at the zero and semiclassical front faces.   Since $\Diag_{\semi}$ intersects the boundary of $M_{0,\semi}$ transversally at $\mcs$ and $\mcf,$ we proceed as in
\cite{Mazzeo-Melrose:Meromorphic}, and extend $\Diag_{\semi}$ across
the boundary of $M_{0,\semi},$ and want to extend the Hadamard parametrix construction across the boundary as well.  To do that we have to make sure the operator $P(h,\sigma)$ lifts to be uniformly transversally
elliptic at the lift of $\Diag_{\semi}$ up to the boundary of $M_{0,\semi},$ i.e.\ it is
elliptic on the conormal bundle of this lift, and thus
it can be extended as a transversally (to the extension of the lifted diagonal)
elliptic operator across the boundaries of $M_{0,\semi}.$ Note that up to
$\mcs\setminus\mcf$, this is the standard semiclassical elliptic
parametrix construction,
while up to $\mcf\setminus\mcs$, this is the first step in the conformally compact
elliptic
parametrix construction of Mazzeo and the first author \cite{Mazzeo-Melrose:Meromorphic},
so the claim here is that these constructions are compatible with each other and extend
smoothly to the corner $\mcs\cap\mcf$ near $\Diag_{\semi}$.

To see the claimed ellipticity, and facilitate further calculations, we remark
that one can choose a defining function of the boundary $x$ such that the metric $g$ can be written in the form 
\begin{equation*}
g=\frac{dx^2}{x^2} + \frac{H_\eps(x,\omega)}{x^2},
\end{equation*}
where $H_\eps$ is one-parameter family of $\CI$ metrics on $\ms^n.$  In these coordinates the operator $P(h,\sigma)$ is given by
\begin{equation}
P(h,\sigma)= h^2\left( -(x\p_x)^2+ n x\p_n + x^2 A(x,\omega) \p_x+ x^2 \Delta_{H_\eps(x,\omega)}+ x^2 W -\frac{n^2}{4}\right) -\sigma^2. \label{formulaofp}
\end{equation}
We then conjugate $P(h,\sigma)$ by $x^{\frac{n}{2}},$ and we obtain
\begin{equation}
Q(h,\sigma)= x^{-\frac{n}{2}} P(h,\sigma) x^{\frac{n}{2}}= h^2\left( -(x\p_x)^2 + x^2 A \p_x + x^2 \Delta_{H_\eps} + x^2 B\right) - \sigma^2,
\end{equation}
where $B=-\frac{n}{2}A+W.$ To analyze the lift of $Q(h,\sigma)$ under $\beta_{\semi}$  we work in projective coordinates for the blow-down map.   We denote the coordinates on the left factor of $\BB^{n+1}$ by $(x,\omega),$ while the coordinates on the right factor will be denoted by $(x',\omega').$ Then  
we define projective coordinates
 \begin{equation}
 x'=\rho, \;\  X=\frac{x}{x'}, \;\ Y=\frac{\omega-\omega'}{x'}, \label{projectivecoord}
 \end{equation}
 which hold away from the left face.  The front face is given by $\mcf=\{\rho=0\}$ and the 
lift of the diagonal is  $\Diag_0=\{X=1, Y=0\}.$  The lift of $Q(h,\sigma)$ under the zero blow-down map $\beta$ is equal to
 \begin{gather*}
 Q_0(h,\sigma)=\beta^*Q(h,\sigma)= \\ h^2\left( -(X\p_X)^2 + X^2A \rho \p_X+ X^2 \Delta_{H_\eps(\rho X,\omega'+\rho Y)}(D_Y)
 -X^2\rho^2B \right) -\sigma^2.
 \end{gather*}
In this notation,  the coefficients of $\Delta_{H_\eps(\rho X,\omega'+\rho Y)}(D_Y)$ depend on 
$\rho,\omega'$ and $Y,$ but the derivatives are in $Y.$   This operator is transversally
elliptic in a neighborhood of $\{X=1,\ Y=0\}$, away from $h=0$.

The restriction of the lift of $Q_0(h,\sigma)$ to the front face  $\mcf=\{\rho=0\},$ is given by
\begin{gather*}
N_\mcf (Q_0(h,\sigma))=h^2( -(X\p_X)^2 + X^2\Delta_{H_\eps(0,\omega')}(D_Y))-\sigma^2.
\end{gather*}
As in \cite{Mazzeo-Melrose:Meromorphic},  $N_{\mcf}(Q_0(h,\sigma))$
will be called the normal operator  of $Q_0(h,\sigma)$ at the zero front face $\mcf.$ Notice that it can be identified with Laplacian with respect to the hyperbolic metric on the half-plane $\{X>0, Y\in \RR^n\}$ with metric $X^{-2}(dX^2+ H_\eps(0,\omega'))$ conjugated by $X^{\frac{n}{2}}.$

 Now we blow-up the intersection of $\Diag_0\times [0,h)$ with $h=0.$  We define projective coordinates
 \begin{equation}\label{eq:X-semi-def}
 X_{\hbar}= \frac{X-1}{h}, \;\ \;\ Y_{\hbar}=\frac{Y}{h}.
 \end{equation}
The lift of $Q(h,\sigma)$ under the semiclassical blow-down map $\beta_{\semi}$ is given in these coordinates by
\begin{equation*}\begin{split}
Q_{\semi}&=\beta_{\semi}^*Q(h,\sigma)\\
&= -((1+h X_{\hbar}) \p_{X_{\hbar}})^2 + (1+h X_{\hbar})^2 \Delta_{H_\eps}(D_{Y_{\hbar}})\\
&\qquad\qquad- (1+hX_{\hbar})^2 A \rho h \p_{X_{\hbar}}+ h^2\rho^2 (1+h X_{\hbar})^2 B -\sigma^2,
\end{split}\end{equation*}
where $H_\eps=H_\eps(\rho(1+h X_{\hbar}), \omega'+\rho h Y_{\hbar}).$  This operator is transversally elliptic to $\{X_{\hbar}=0,\ Y_{\hbar=0}\}$ near $h=0.$

The restriction of the lift of $Q_{\semi}$ to the semiclassical face, $\mcs=\{h=0\}$ will be called the  normal operator of $Q_{\semi}$ at the semiclassical face, and  is equal to
\begin{equation}
N_\mcs(Q_{\semi})=-\p_{X_{\hbar}}^2 +\Delta_{H_\eps(\rho,\omega')}(D_{Y_{\hbar}})-\sigma^2. \label{norm-op-semic-face}
\end{equation}
This is a  family of differential operators on $\RR^{n+1}_{X_{\hbar},Y_{\hbar}}$
depending on $\rho$ and $\omega',$ and for each $\omega'$ and $\rho$ fixed, $N_\mcs(Q_{\semi})+\sigma^2$
is the Laplacian with respect to the metric
$$
\delta_{\semi}=dX_{\hbar}^2+ \sum H_{\eps,ij}(\rho,\omega) dY_{\hbar,i} dY_{\hbar,j},
$$
which is isometric to the Euclidean metric under a linear change of variables
$(X_{\hbar},Y_{\hbar})$ for fixed $(\rho,\omega')$, and the change of variables can
be done smoothly in $(\rho,\omega')$. Note that the fibers of the semiclassical
blow-down map $\beta_{\semi}$ on $\cS$ are given exactly by $(\rho,\omega')$
fixed.

Therefore, the operator $Q(h,\sigma)$ lifts under $\beta_{\semi}$ to an operator $Q_{\semi}$ which is elliptic in a neighborhood of the lifted diagonal uniformly  up to the zero front face and the semiclassical front face.   Since the diagonal meets the two faces transversally, one can extend it  to a neighborhood of $\mcf$ and $\mcs$ in the double of the manifold $M_{0,\semi}$ across $\mcs$ and $\mcf,$ and one can also extend the operator $Q_{\semi}$ to be elliptic in that neighborhood.  Now,  using standard elliptic theory
(or, put somewhat differently, the standard theory of conormal distributions to an
embedded submanifold without boundary, in this case the extension of
the diagonal), one can find a $G_0\in \Psi_{0,\semi}^{-2}(\Bn)$ whose Schwartz kernel
lifts to a distribution  supported in a neighborhood of $\Diag_{\semi}$  such that
\begin{equation}
Q(h,\sigma) G_0-\Id= E_0 \in \Psi_{0,\semi}^{-\infty}(\Bn). \label{term0}
\end{equation}

Next we  will  remove the error at the semiclassical front face.  We will find an operator 
 $$
G_1= G'_1+e^{-i\frac{\sigma}{h}r }U'_1,
\ G'_1\in\Psi_{0,\semi}^{-\infty}(\Bn),
\ U'_1\in \Psi_{0,\semi}^{-\infty,\infty,-\frac{n}{2}-1,\infty}(\Bn)
$$
such that
$G'_1$ is supported near $\Diag_{\semi}$ while $U'_1$ is supported away from it, and
 \begin{gather}
  \begin{gathered}
 Q(h,\sigma)G_1-E_0=E_1, \;\\
E_1=E'_1+ e^{-i \sigh r} F_1, \;\ E'_1\in\rho_S^\infty\Psi_{0,\semi},\ \ F_1
 \in  \rho_S^\infty \Psi_{0,\semi}^{-\infty,\infty,-\frac{n}{2}-1,\infty} \\
 \text{ and  with } \beta_{\semi}^* K_{E_1}\text{ supported away from} \ \mcl,\ \mcr,
 \end{gathered}\label{term1n}
  \end{gather}
and $K_{E'_1}$, resp.\ $K_{F_1}$ supported near, resp.\ away from, $\Diag_{\semi}$.
In other words, the the error term  $E_1$ is such that the kernel of
$E'_1$ vanishes to infinite order at all boundary faces (hence from now on we can
regard it as trivial and ignore it), while
the kernel of $F_1$ lifts to a $\CI$ function which is supported near $\mcs$ (and in particular vanishes to infinite order at the right and left faces),  vanishes to infinite order at the semiclassical front face, and also  vanishes to order $\frac{n}{2}$ at the boundary face $\mca.$   

We will use the facts discussed above about the normal operator at the semiclassical face, $N_{\mcs}(Q_{\semi}).$
Notice that $\mcs,$ the semiclassical front face, is itself a $\CI$ manifold with boundary which intersects the zero front face, $\mcf,$ transversally, and therefore it can be extended across $\mcf.$ Similarly,  the operator $N_{\mcs}(Q_{\semi})$ can be extended to an elliptic operator across $\mcf.$
We deduce from   \eqref{norm-op-semic-face} that for each $\rho$ and $\omega'$ fixed, and for $\im\sigma<0$, the inverse of
$N_{\mcs}(Q_{\semi})$ is essentially the resolvent of the Euclidean Laplacian at energy $\sigma^2,$  pulled back by the linear change of variables corresponding to
$H_{\eps}(\rho,\omega');$ for $\im\sigma\geq 0$ we use the analytic continuation
of the resolvent from $\im\sigma<0$.
Here is where we need to make a choice corresponding to
the analytic continuation of the resolvent of $P(h,\sigma)$
we wish to construct, i.e.\ whether
we proceed from $\im\sigma>0$ or $\im\sigma<0$; we need to make the
corresponding choice for the Euclidean resolvent.

Let $R_0$ denote the analytic continuation of the inverse $L^2\to H^2$
of the {\em family} (depending on $\rho,\omega'$)
$N_{\mcs}(Q_{\semi})$ from $\im\sigma<0$; it is thus (essentially, up to a linear change
of coordinates, depending smoothly on $\rho,\omega'$)
the analytic continuation of the resolvent of the Euclidean
Laplacian.
Since we are working with the analytic continuation of the resolvent, it is not
automatic that one can solve away exponentially growing errors which arise
in the construction below (i.e.\ that one can apply $R_0$ iteratively to errors that arise),
and thus it is convenient to make the following construction quite explicit order in $h$ we are merely in the
`limiting absorption principle' regime (i.e.\ with real spectral parameter),
thus the construction below is actually stronger than what is needed below.
Moreover, from this point of view the construction can be interpreted as
an extension of the semiclassical version of the intersecting
Lagrangian construction of \cite{Melrose-Uhlmann:Intersection}
extended to the 0-double space; from this perspective the method we present
is very `down to earth'.

Via the use of a partition of unity, we may assume that there is a coordinate patch
$U$ in $\Bn$ (on which the coordinates are denoted by $z$)
such that $E_0$ is supported in
$\beta_{\semi}^{-1}(U\times U\times [0,1))$. Note that coordinate charts of this form
cover a neighborhood of $\mcs$,
so in particular $E_0$ is in
$\dCI(\Bn\times_0\Bn\times[0,1))$ outside these charts, hence can already be regarded
as part of the final error term and we can ignore these parts henceforth.

Now, near $\mcs$,
$\beta_{\semi}^{-1}(U\times U\times [0,1))$ has a product
structure $\overline{\zT}U\times [0,\delta_0)= U\times\Bn\times[0,\delta_0)$,
where $\overline{\zT}U$
denotes the fiber-compactified zero tangent bundle, and $[0,\delta_0)$ corresponds
to the boundary defining function $\rho_S$. Indeed, the normal bundle of
$\Diag_0$ in $\Bn\times_0\Bn$ can be identified with $\zT\Bn$, via lifting 0-vector
fields from on $\Bn\times_0\Bn$ via the left projection, which are transversal
to $\Diag_0$, hence
the interior of the inward pointing spherical normal bundle of $\Diag_0\times\{0\}$
in $M_0=\Bn\times_0\Bn\times[0,1)$ can be identified with $\zT\Bn$,
while the inward pointing spherical normal bundle itself can be identified
with the radial compactification of $\zT\Bn$.
However, it is fruitful to choose the identification in a particularly convenient form
locally. Namely, away from
$\pa\Diag_0\times\{0\}$, coordinates $z$ on $U$ give coordinates
$$
z',Z_{\semi}=(z-z')/h,\rho_S
$$
near the interior of the front face $\mcs$
(here $Z_{\semi}$ is the coordinate
on the fiber of $T_U\Bn$ over $z'$), while near $\pa\Diag_0\times\{0\}$,
$(x',\omega',X_{\semi},Y_{\semi},\rho_S)$ (see \eqref{eq:X-semi-def}) are coordinates
near the interior of $\mcs$ (now $(X_{\semi},Y_{\semi})$ are the coordinates
on the fiber of $\zT_U\Bn$ over $(x',\omega')$). To obtain coordinates valid
near the corner, one simply
needs to radially compactify the fibers of $\zT_U\Bn$, i.e.\ replace the linear
coordinates $Z_{\semi}$, resp.\ $(X_{\semi},Y_{\semi})$ by radially compactified versions
such as $|Z_{\semi}|$ and $\hat Z_\semi=Z_{\semi}/|Z_{\semi}|\in \bbS^n$ in the former case.

Moreover, if a function, such as $E_0$,
is supported away from $\mca$, then its support is compact in the {\em interior}
of the fibers $\Bn$ of the fiber-compactified tangent space.
Now, the interior of $\Bn$ is a vector space, $T_p U$, $p\in U$, and in particular
one can talk about fiberwise polynomials.
Over compact subsets of the fibers, the boundary defining function $\rho_S$ is
equivalent to $h$, and indeed we may choose boundary defining functions $\rho_A$
and $\rho_S$ such that
$$
h=\rho_A\rho_S.
$$
Note that $\rho_A$ is thus a boundary defining function of the compactified fibers
of the tangent bundle; it is convenient to make a canonical choice using
the metric $g_\ep$, which is an inner product on $\zT_p U$, hence a translation
invariant metric on the fibers of $\zT U$, namely to make the defining function
$\rho_A$ the reciprocal of the distance function from the zero section (i.e.\ the
diagonal under the identification), smoothed out at the zero section.
In particular, if $U$ is a coordinate chart near $\pa \Bn$ then
$\rho_{\mca}=\left((X_1)^2+ |Y_1|_{H_\eps}^2\right)^{-\frac{1}{2}}.$
This is indeed consistent with our previous calculations since
$$
\rho_{\mca}=h\left( (X-1)^2+|Y|^2\right)^{-\ha}
$$
and therefore it is, away from $\Diag_{\semi},$  a defining function of the semiclassical face $\mca.$

If $v\in\CI(M_{0,\semi})$, then expanding $v$ in Taylor series around $\mcs$ up to
order $N$, we have
$$ 
v=\sum_{k\leq N} \rho_S^k v'_k+v',\ v'_k\in\CI(\overline{T}U),
\ v'\in\rho_S^{N+1}\CI(M_{0,\semi}).
$$ 
In terms of the local coordinates valid near the corner $\mcs\cap\mca$ over an
interior coordinate chart $U$,
\begin{gather*}
\rho_S=|z-z'|,\ \frac{z-z'}{|z-z'|},\ \rho_A=\frac{h}{|z-z'|},\ z',
\end{gather*}
$v_k$ is a $\CI$ function of $|z-z'|,\ \frac{z-z'}{|z-z'|},\ z'$. It is convenient to
rewrite this as
\begin{equation}\label{eq:mod-TS-at-mcs}
v=\sum_{k\leq N} h^k |z-z'|^{-k} v'_k+v'=\sum_{k\leq N} h^k v_k+v',
\ v_k\in \rho_A^{-k}\CI(\overline{T}U),
\ v'\in\rho_S^{N+1}\CI(M_{0,\semi}),
\end{equation}
for the reason that the vector fields $D_{z_j}$ are tangent to the fibers given by
constant $h$,
i.e.\ commute with multiplication by $h$. One can rewrite $v$ completely
analogously,
$$
v=\sum_{k\leq N} h^k v_k+v',
\ v_k\in \rho_A^{-k}\CI(\overline{T}U),
\ v'\in\rho_S^{N+1}\CI(M_{0,\semi}),
$$
for coordinate charts $U$ at $\pa\Bn$.

In addition, if
$a\in\CI(\Bn\times_0\Bn\times[0,1))$, then expanding $a$ in Taylor series
around $\Diag_0\times\{0\}$, shows that for any $N$,
modulo $\rho_S^{N+1}\CI(U\times\Bn\times[0,\delta_0))$, it is of the form
\begin{gather*}
\sum_{|\alpha|+k\leq N}
a_{\alpha,k}(z') (z-z')^\alpha h^k
=\sum_{|\alpha|+k\leq N} a_{\alpha,k}(z') Z_\semi^\alpha h^{k+|\alpha|}\\
=\sum_{|\alpha|+k\leq N} a_{\alpha,k}(z') \hat Z_\semi^\alpha \rho_S^{k+|\alpha|}\rho_A^k,
\end{gather*}
where $\hat Z_\semi^\alpha=Z_\semi^{\alpha}/|Z_\semi|^{|\alpha|}$ (except near the zero section) is
$\CI$ on $\Bn$. While the last expression is the most geometric way of encoding
the asymptotics at $\pa\Bn$, it is helpful to take advantage of the stronger statement
on the previous line, which shows that the coefficients are polynomials in the fibers,
of degree $\leq N$.

The vector fields $hD_z$, resp.\ $hD_X$ and $hD_Y$, acting on a modified
Taylor series as in \eqref{eq:mod-TS-at-mcs}, become
$D_{Z_{\semi}}$, resp.\ $D_{X_{\semi}}$ and $D_{Y_{\semi}}$, i.e.\ act on the coefficients
$v_k$ only (and on $v'$, of course) so we obtain that,
modulo coefficients in
$\rho_S^{N+1}\CI(U\times\Bn\times[0,\delta_0))$, $P(h,\sigma)$ lifts to a differential
operator with polynomial coefficients on the fibers, depending smoothly on the
base variables, i.e.\ an operator of the form
$$
\sum_{|\alpha|+k\leq N,|\beta|\leq 2} a_{\alpha,k,\beta}(z') Z_\semi^\alpha h^{k+|\alpha|} D_{Z_{\semi}}^\beta,
$$ 
with an analogous expression in the $(X_{\semi},Y_{\semi})$ variables. Here the
leading term in $h$, corresponding to $h^0$, is $N_{\mcs}(Q_{\semi})=
\Delta_{g_{\ep}(x',\omega')}-\sigma^2$, i.e.\ we have
\begin{gather*}
P(h,\sigma)h^mv_m\\
=h^m(\Delta_{g_{\ep}(x',\omega')}-\sigma^2)v_m+
h^m\sum_{0<|\alpha|+k\leq N,|\beta|\leq 2} a_{\alpha,k,\beta}(z') Z_\semi^\alpha h^{k+|\alpha|} D_{Z_{\semi}}^\beta v_m.
\end{gather*}

Thus, one can iteratively solve away $E_0$ as follows. Write
$$
E_0=h^{-n-1}\sum h^mE_{0,m}
$$
as in \eqref{eq:mod-TS-at-mcs}, and note that each $E_{0,m}$ is compactly supported.
Let
$$
G_{1,0,m}=R_0 E_{0,m},
$$
so
\begin{gather*}
P(h,\sigma)h^{m-n-1} G_{1,0,m}\\
=h^mE_{0,m}+h^{m-n-1}\sum_{0<|\alpha|+k\leq N,|\beta|\leq 2} a_{\alpha,k,\beta}(z') Z_\semi^\alpha h^{k+|\alpha|-n-1} D_{Z_{\semi}}^\beta R_0 E_{0,m},
\end{gather*}
and thus we have replaced the error $h^mE_{0,m}$ by an error of the form
\begin{gather*}
\sum_{0<|\alpha|+k\leq N,|\beta|\leq 2} a_{\alpha,k,\beta}(z') Z_\semi^\alpha h^{m+k+|\alpha|-(n+1)} D_{Z_{\semi}}^\beta R_0 E_{0,m}\\
=\sum_{1\leq\ell\leq N} h^{m+\ell-(n+1)}
\sum_{|\alpha|\leq \ell}\sum_{|\beta|\leq 2} a_{\alpha,\ell-|\alpha|,\beta}(z') Z_\semi^\alpha D_{Z_{\semi}}^\beta R_0 E_{0,m}\\
=\sum_{1\leq\ell\leq N} h^{m+\ell-(n+1)} L_\ell R_0 E_{0,m}
\end{gather*}
which has the feature that not only does it vanish to (at least)
one order higher in $h$, but the $h^{m+\ell-(n+1)}$ term is given by
a differential operator $L_\ell$ with polynomial coefficients of degree $\leq \ell$
applied to $R_0E_{0,m}$,
with $E_{0,m}$ compactly supported. As the next lemma states,
one can apply $R_0$ to an expression
of the form $L_\ell R_0 E_{0,m}$, and thus iterate the construction.

\begin{lemma}
Suppose $M_j$, $j=1,2,\ldots,N$,
are differential operators with polynomial coefficients of degree $m_j$
on $\RR_w^{n+1}$. Then
$$ 
R_0 M_1 R_0 M_2\ldots R_0 M_N R_0:\CI_c(\RR^{n+1})\to\CmI(\RR^{n+1})
$$ 
has an analytic extension from $\im\sigma<0$ to $\re\sigma>1$,
$\im\sigma\in\bbR$, and
$$
R_0 M_1 R_0 M_2\ldots R_0 M_N R_0:\CI_c(\RR^{n+1})\to e^{-i\sigma\langle w\rangle}
\langle w\rangle^{-n/2+N+m}\CI(\Bn),
$$
with $m=\sum m_j$.
\end{lemma}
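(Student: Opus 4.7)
The plan is to establish the claim by induction on $N$, tracking at each stage an asymptotic expansion at infinity of the form $e^{-i\sigma\langle w\rangle}$ times a classical symbol on the radial compactification of $\RR^{n+1}$, which I shall denote by $\Bn$ (as in the lemma statement). The induction rests on three ingredients: (i) the starting point, that $R_0(\sigma)f\in e^{-i\sigma\langle w\rangle}\langle w\rangle^{-n/2}\CI(\Bn)$ for $f\in\CIc(\RR^{n+1})$, which is the standard outgoing Euclidean resolvent asymptotics obtained from the Hankel function representation of the kernel (and which furnishes the analytic continuation of $R_0$ on $\CIc$-inputs from $\im\sigma<0$ to $\re\sigma>1$, $\im\sigma\in\RR$); (ii) the elementary fact that a differential operator of order $\leq 2$ with polynomial coefficients of degree $m_j$ carries $e^{-i\sigma\langle w\rangle}\langle w\rangle^p\CI(\Bn)$ into $e^{-i\sigma\langle w\rangle}\langle w\rangle^{p+m_j}\CI(\Bn)$, since $\pa_{w_i}\langle w\rangle = w_i/\langle w\rangle$ and $\pa_{w_i}\pa_{w_j}\langle w\rangle$ are bounded and smooth on $\Bn$; and (iii) the main lemma that $R_0(\sigma)$ extends to a continuous map
\begin{equation*}
R_0(\sigma):e^{-i\sigma\langle w\rangle}\langle w\rangle^p\CI(\Bn)\longrightarrow e^{-i\sigma\langle w\rangle}\langle w\rangle^{p+1}\CI(\Bn),
\end{equation*}
holomorphic in $\sigma$ on $\{\re\sigma>1,\ \im\sigma\in\RR\}$.

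Granting (iii), the composition $R_0 M_1 R_0\cdots R_0 M_N R_0 f$ is assembled right-to-left: $R_0 f$ is of order $-n/2$ by (i); each application of $M_j$ raises the symbol order by $m_j$ via (ii); each subsequent application of $R_0$ raises it by $1$ via (iii); so after $N$ cycles the resulting symbol has order $-n/2+m+N$, which is exactly the claim. Holomorphy of the composition on the strip follows since each constituent map is holomorphic in $\sigma$ in the appropriate operator topology, and finite compositions of holomorphic operator-valued functions are holomorphic.

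The main obstacle is (iii), since for $\im\sigma\geq 0$ the sources $v=e^{-i\sigma\langle w\rangle}b$ grow exponentially at infinity and $R_0(\sigma)$ is not defined directly as an $L^2$-inverse. The approach is a Hamilton--Jacobi / transport parametrix: seek $u=e^{-i\sigma\langle w\rangle}\tilde a$ with $(\Delta-\sigma^2)u=v$. Conjugating out the exponential factor, the $\sigma^2$-terms cancel at infinity because $|\nabla\langle w\rangle|^2\to 1$, leaving an inhomogeneous transport equation along the radial rays for the principal symbol of $\tilde a$, together with subleading transport equations for the successive Taylor coefficients at $\pa\Bn$. Solving these iteratively produces $\tilde a\in\langle w\rangle^{p+1}\CI(\Bn)$ with $(\Delta-\sigma^2)u-v\in\CIc(\RR^{n+1})$; the compactly supported error is then removed by applying $R_0(\sigma)$ in its $\CIc$-to-outgoing form from (i). Uniqueness within the outgoing class (absence of exponentially growing incoming tails) forces this $u$ to coincide with the distributional $R_0(\sigma)v$ whenever both are defined, and matching against the $\im\sigma<0$ regime, where both sides are computed by the $L^2$-inverse, identifies the extension with the analytic continuation of $R_0(\sigma)$. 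Holomorphy in $\sigma$ throughout the strip is inherited from the explicit $\sigma$-dependence of the transport coefficients and of the $\CIc$-resolvent from (i).
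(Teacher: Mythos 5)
Your proposal is correct in spirit but takes a genuinely different route from the paper's proof. The paper's argument is essentially algebraic: for $\im\sigma<0$ it commutes all $D_w$-factors to the right (reducing to $M_j = w^{\alpha^{(j)}}$), passes to the Fourier side where $R_0$ is multiplication by $(|\zeta|^2-\sigma^2)^{-1}$ and $w^{\alpha^{(j)}}$ is $(-D_\zeta)^{\alpha^{(j)}}$, and applies the product rule to express the whole chain as a finite sum of terms $(|\zeta|^2-\sigma^2)^{-k}\Fr(Q(D_w)w^\beta f)$ with $k\le N+1+m$ and $Q(D_w)w^\beta f$ still compactly supported. Since $R_0^{k}$ on compactly supported data is a constant multiple of $\pa_\sigma^{k-1}R_0$, the analytic continuation and the symbol order $-n/2+N+m$ both follow by citing the standard Euclidean resolvent continuation and differentiating in $\sigma$. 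This sidesteps entirely the delicate question of what $R_0(\sigma)$ means on exponentially growing inputs for $\im\sigma\ge 0$.

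Your approach is to define an outgoing extension of $R_0(\sigma)$ on the weighted spaces $e^{-i\sigma\langle w\rangle}\langle w\rangle^p\CI(\Bn)$ via a transport parametrix and then induct. This is a legitimate, more hands-on PDE argument, and the bookkeeping of the order gain ($-n/2$ to start, $+m_j$ per $M_j$, $+1$ per $R_0$) matches the paper's conclusion. The trade-off is that essentially all the work is concentrated in your claim (iii), which as stated is nontrivial and leaves several points to be filled in. First, solving the conjugated transport hierarchy only produces an error that is rapidly decaying (in $\langle w\rangle^{-\infty}\CI(\Bn)$, after Borel summation), not compactly supported, so the base map $R_0$ must first be extended from $\CIc$ to Schwartz or rapidly decaying inputs with the same conclusion. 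Second, for $\im\sigma\ge 0$ the space $e^{-i\sigma\langle w\rangle}\langle w\rangle^p\CI(\Bn)$ meets the kernel of $\Delta-\sigma^2$ (e.g.\ plane waves), so "the" extension is not determined by being a right inverse alone; the outgoing (no-incoming-tail) normalization you invoke, and the verification that the resulting composition coincides with the analytic continuation from $\im\sigma<0$, are real steps that need the explicit comparison you sketch. Third, holomorphy in $\sigma$ of the extended map needs an argument beyond the $\sigma$-dependence of the transport coefficients, since the parametrix is assembled by Borel summation and error removal; the clean way to get it is to observe that for $\im\sigma<0$ the map $b\mapsto\tilde a$ is holomorphic and then continue. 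None of these are gaps in the sense of a false step, but they are each where the paper's Fourier-side trick quietly does the heavy lifting; filling them would make your proof longer than the paper's.
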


\begin{proof}
For $\im\sigma<0$, $D_{w_k}$ commutes with $\Delta-\sigma^2$, hence
with $R_0$, while commuting $D_{w_k}$ through a polynomial gives rise to a
polynomial of lower order, so we can move all derivatives to the right, and also assume
that $M_j=w^{\alpha^{(j)}}$, $\alpha^{(j)}\in\Nat^{n+1}$,
$|\alpha^{(j)}|\leq m_j$, so we are reduced to
examining the operator
$$
R_0 w^{\alpha^{(1)}} R_0 w^{\alpha^{(2)}}\ldots R_0 w^{\alpha^{(N)}} R_0.
$$
It is convenient to work in the Fourier transform representation.
Denoting the dual variable of $w$ by $\zeta$;
for $\im\sigma<0$,
$R_0$ is multiplication by $(|\zeta|^2-\sigma^2)^{-1}$, while $w^{\alpha^{(j)}}$ is the
operator $(-D_\zeta)^{\alpha^{(j)}}$. Rewriting
$\cF (R_0 w^{\alpha^{(1)}} R_0 w^{\alpha^{(2)}}\ldots R_0 w^{\alpha^{(N)}} R_0 f)$, the product rule
thus gives an expression of the form
\begin{gather*}
\sum_{|\beta|\leq \alpha^{(1)}+\ldots+\alpha^{(N)}}
(|\zeta|^2-\sigma^2)^{-(N+1+|\alpha^{(1)}|+\ldots+|\alpha^{(N)}|)} Q_{\alpha,N,\beta}(\zeta)
(-D_\zeta)^\beta
\Fr  f\\
=\sum_{|\beta|\leq \alpha^{(1)}+\ldots+\alpha^{(N)}}
(|\zeta|^2-\sigma^2)^{-(N+1+|\alpha^{(1)}|+\ldots+|\alpha^{(N)}|)} \Fr Q_{\alpha,N,\beta}(D_{w})
w^\beta  f,
\end{gather*}
where $Q_{\alpha,N,\beta}$ is a polynomial in $\zeta$. Since we are considering
compactly supported $f$, the differential operator $Q_{\alpha,N,\beta}(D_w)
w^\beta$ is harmless, and we only need to consider $R_0^{N+1+m}$ applied
to compactly supported functions. This can be further rewritten as
a constant multiple of $\pa_\sigma^{N+m} R_0$, so the well-known results
for the analytic continuation of the Euclidean resolvent yield the stated
analytic continuation and the form of the result; see
Proposition 1.1 of \cite{Melrose: Geometric Scattering}.
\end{proof}

Applying the lemma iteratively, we construct
$$
\tilde G_{1,m}=e^{-i\sigma\langle Z_\semi\rangle}
\langle Z_\semi\rangle^{-n/2+N+m}G'_{1,m},\ G'_{1,m}\in\CI(U\times\Bn),
$$
such that
$$
P(h,\sigma) \sum_{m=0}^\infty
h^{m-(n+1)} \tilde G_{1,m}\sim\sum_{m=0}^\infty h^{m-(n+1)} E_{0,m},
$$
where the series are understood as formal series (i.e.\ this is a statement of the
equality of coefficients). Borel summing $\langle z\rangle^{m}h^m G'_{1,m}
=\rho_S^m G'_{1,m}$, and obtaining $G_1\in\CI(U\times\Bn\times[0,\delta_0))$
as the result, which we may arrange to be supported where $\rho_S$ is small,
we deduce that \eqref{term1n} holds.

The next step is to remove the error at the semiclassical face $\mca.$  We want to construct
$$
G_2=e^{-i\sigh r} U_2,\ U_2\in\rho_S^\infty\Psi_{0,\semi}^{-\infty,\frac{n}{2},
-\frac{n}{2}-1,\frac{n}{2}},
$$
such that
\begin{gather}
\begin{gathered}
P(h,\sigma) G_2-e^{-i \sigh r} F_1=E_2, \;\ E_2=e^{-i\sigh r} F_2, \;\\
\ F_2 \in h^\infty\rho_L^{n/2}\rho_R^{n/2}\CI(\Bn\times_0\Bn\times[0,1))=\rho_{\mcs}^\infty \Psi_{0,\semi}^{-\infty,\frac{n}{2}, \infty, \frac{n}{2}}.
\end{gathered}
\label{term2}
\end{gather}
In other words, we want the error to vanish to infinite order at the semiclassical face $\mca$ and at the semiclassical front face $\mcs,$ and to vanish to order 
$\frac{n}{2}$ at the left and right faces; the infinite order vanishing
at $\mcs$ means that $\mcs$ can be blown-down (i.e.\ does not need to be
blown up), which, together with $h$ being the joint defining function
of $\mca$ and $\mcs$, explains the equality of the indicated two spaces. 
This construction is almost identical to the one carried out in section \ref{3D-parametrix}.
We begin by observing that the semiclassical face $\mca$ consists of the stretched product $\Bn\times_0 \Bn$ with $\Diag_0$ blown-up, which is exactly the manifold $\Bn\times_1 \Bn$ defined in section \ref{3D-parametrix}.  Moreover, as $F_1$ vanishes to infinite order at $\mcs$, see
\eqref{term1n}, the latter
can be blown down, i.e.\ $F_1$ can be regarded as being of the form
\begin{gather}
  \begin{gathered}
F_1 \in  h^{-n/2-1}\CI(\Bn\times_0\Bn\times[0,1)),
\  \text{ with } K_{F_1}\text{ supported away from} \ \mcl,\ \mcr,\\
 \text{ and vanishing to infinite order at}\ \Diag_0\times[0,1).
 \end{gathered}
  \end{gather}

Now, $F_1$ has an asymptotic expansion at the boundary face $h=0$ of the form
\begin{gather*}
F_1\sim h^{-\frac{n}{2}-1} \sum_{j=0}^\infty h^j F_{1,j}, \;\  F_{1,j} \in \CI(\Bn\times_0\Bn), \\ F_{1,j} \text{  vanishing to infinite order at  } \Diag_0\times[0,1),
\ \text{ supported near }  \Diag_0\times[0,1).
\end{gather*}

So we think of $F_1$ as an element of $\Bn\times_1 \Bn \times [0,1),$ where the blow-up $\Bn\times_1 \Bn,$ was defined in section \ref{3D-parametrix},
see figure \ref{fig2}, with an expansion
\begin{gather*}
F_1\sim h^{-\frac{n}{2}-1} \sum_{j=0}^\infty h^j F_{1,j},\\
F_{1,j}\in \CI(\Bn\times_1 \Bn), \text{ vanishing to infinite order at }  D.
\end{gather*}
So we seek $U_2 \sim h^{\frac{n}{2}} \sum_{j}  h^j U_{2,j}$  with $U_{2,j}$ vanishing
to infinite order at $D,$ such that
\begin{gather*}
P(h,\sigma) e^{-i\sigh r} U_2-e^{-i \sigh r} F_1=e^{-i \sigh r} R, \\ R \in h^\infty \CI(\Bn\times_1 \Bn), \text{ vanishing to infinite order at } D.
\end{gather*}
Matching the coefficients of the expansions we get the following set of transport equations
\begin{gather*}
2i\sigma |g|^{-\oq} \p_r (|g|^{\oq} U_{2,0})=-F_{1,0}, \text{ if } r>0, \\
U_{2,0}=0 \text{ at } r=0,
\end{gather*}
and for $j\geq 1,$
\begin{gather*}
2i\sigma |g|^{-\oq} \p_r (|g|^{\oq} U_{2,j})= (\Delta+x^2 W-\frac{n^2}{4})U_{2,j-1}- F_{1,j}, \text{ if } r>0, \\
U_{2,j}=0 \text{ at } r=0.
\end{gather*}
Notice that $F_{j,0}$ is compactly supported and, as seen in equations \eqref{eq:U_0-U_1-asymp},   $U_{2,0}\in \rho_L^{\frac{n}{2}}  \rho_R^{\frac{n}{2}} \CI(\Bn\times_1 \Bn).$  Moreover, as in \eqref{eq:U_0-error-asymp}, one gets that
$(\Delta+x^2 W-\frac{n^2}{4})U_{2,0}\in R^2 \rho_{R}^{\frac{n}{2}} \rho_L^{\frac{n}{2}+2} \CI(\Bn\times_1 \Bn),$
 thus one can solve the transport equation for
$U_{2,1},$ and gets that  $U_{2,1} \in \rho_L^{\frac{n}{2}}  \rho_R^{\frac{n}{2}} \CI(\Bn\times_1 \Bn).$ 

One obtains by using induction that $U_{2,j}\in \rho_L^{\frac{n}{2}}  \rho_R^{\frac{n}{2}} \CI(\Bn\times_1 \Bn),$ and
$(\Delta+x^2 W-\frac{n^2}{4})U_{2,j}\in R^2 \rho_{L}^{\frac{n}{2}+2} \rho_R^{\frac{n}{2}} \CI(\Bn\times_1 \Bn),$
for all $j.$ Then one sums the series asymptotically using Borel's lemma. This gives $U_2$ and proves \eqref{term2}.

The last step in the parametrix construction is to remove the error at the zero front face. So far we have 
\begin{gather*}
P(h,\sigma) \left(G_0-G_1+G_2\right)-\Id=E_2, \;\ 
\end{gather*}
with
$$
E_2=e^{-i\sigh r} F_2, \;\  F_2 \in \rho_{\mcs}^\infty \Psi_{0,\semi}^{-\infty,\frac{n}{2}+2, \infty, \frac{n}{2}}.
$$
Now we want to construct $G_3$ such that
\begin{gather}
P(h,\sigma) G_3-E_2=E_3 \in  e^{-i\sigh r}\rho_{\mcs}^\infty\rho_{\mcf}^\infty \Psi_{0,\semi}^{-\infty,2+\frac{n}{2}, \infty, \frac{n}{2}}(\Bn). \label{term3}
\end{gather}

We recall from Proposition \ref{distance}, that $r=-\log(\rho_R \rho_L)+F,$ $F>0.$ So,
$$
\exp(-i\sigh r)=(\rho_R \rho_L)^{i\sigh r} \exp(-i \sigh F).
$$
Therefore
the error term $E_2$ in \eqref{term2} satisfies
\begin{gather*}
\widetilde{E_2}=\beta_{\semi}^* E_2 \in \rho_{\mcs}^\infty \rho_{\mca}^\infty \rho_{R}^{\frac{n}{2}+i \sigh} \rho_{L}^{2+\frac{n}{2}+i \sigh} \CI(M_{0,\semi}).
\end{gather*}
 We write 
 \begin{gather*}
 \widetilde{E_2}\sim \sum_{j=0}^\infty \rho_{\mcf}^j E_{2,j},  \;\ E_{2,j} \in \rho_\mcs^\infty \rho_\mca^\infty \rho_{R}^{\frac{n}{2}+i \sigh} \rho_{L}^{2+\frac{n}{2}+i \sigh} \CI(\mcf),
 \end{gather*}
 and we want to construct $G_3$ such that 
 \begin{gather*}
 \beta_{\semi}^* (G_3)\sim \sum_{j=0}^\infty \rho_{\mcf}^j G_{3,j},  
 \end{gather*}
 and that
 \begin{gather*}
 N_{\mcf}(Q_{\semi}) G_{3,j}= E_{2,j}, \;\ 
 G_{3,j} \in  \rho_\mcs^\infty \rho_\mca^\infty \rho_{R}^{\frac{n}{2}+i \sigh} \rho_{L}^{\frac{n}{2}+i \sigh} \CI(\mcf).
 \end{gather*} 
The asymptotic behavior in $\rho_R$ and $\rho_L$  follows from an application of Proposition 6.15 of \cite{Mazzeo-Melrose:Meromorphic}, and the fact that
$N_\mcf(Q_{\semi})$ can be identified with the Laplacian on the hyperbolic space.  Now we just have to make sure, this does not destroy the asymptotics at the faces
$\mcs$ and $\mca.$   But one can follow exactly the same construction we have used above, now restricted to the zero front face instead of $M_{0,\semi}$ to construct
$G_{3,j}$ vanishing to infinite order at the faces $\mcs$ and $\mca.$

This gives a parametrix,  $\widetilde{G}=G_0-G_1+G_2-G_3 \in \Psi_{0,\semi}^{-2, \novt+i\sigh,-\novt-1,\novt +i\sigh}$ that satisfies
\begin{gather}
P(h,\sigma) \widetilde{G}-\Id=R=E_3+E_3' \in \rho_{\mcf}^\infty \rho_{\mcs}^\infty \Psi_{0,\semi}^{-\infty, 2+\novt+i\sigh,\infty,\novt+i\sigh}(\Bn). \label{weakparametrix}
\end{gather}
Since
\begin{gather*}
E_3'\in h^\infty\Psi_{0,\semi}^{-\infty},\\
E_3=e^{-i\sigh r} F_3, \;\  F_3 \in \rho_{\mcs}^\infty \rho_{\mcf}^\infty
\Psi_{0,\semi}^{-\infty,2+\frac{n}{2}, \infty, \frac{n}{2}},
\end{gather*}
and $E_3$ is supported away from $\Diag_\semi$.  The last step in the construction is to remove the error term at the left face and it will be done using the indicial operator as in section 7 of  
\cite{Mazzeo-Melrose:Meromorphic}.   Since in the region near the left face is away from the semiclassical face, this is in fact the same construction as in \cite{Mazzeo-Melrose:Meromorphic}, but with the parameter $h.$   Using equation \eqref{formulaofp} and the projective coordinates \eqref{projectivecoord}, we find that the operator $P(h,\sigma)$ lifts to
\begin{gather*}
P_0(h,\sigma)=h^2( -(X\p_X)^2+nX\p_X+\rho A X^2\p_X + X^2 \Delta_{H_\eps} +\rho^2 X^2 W -\frac{n^2}{4})-\sigma^2.
\end{gather*}
In these coordinates the left face is given by $\{X=0\}.$  Therefore, the kernel of the composition $K(P(h,\sigma) \widetilde{G})$ when lifted to $\Bn\times_0 \Bn$ is, near the left face, equal to 
\begin{gather*}
K(P(h,\sigma) \widetilde G)=\left( h^2( -(X\p_X)^2+nX\p_X -\frac{n^2}{4})-\sigma^2\right) K(\widetilde{G})  + O(X^2).
\end{gather*}
The operator $I(P(h,\sigma))= h^2( -(X\p_X)^2+nX\p_X -\frac{n^2}{4})-\sigma^2$ is called the indicial operator of $P(h,\sigma).$  
Since  $\widetilde{G} \in \Psi_{0,\semi}^{-2, \novt+i\sigh,-\novt-1,\novt +i\sigh}(\Bn),$ then  near the left face 
$K(\widetilde{G})\in \mck^{\novt+i\sigh,-\novt-1,\novt +i\sigh}(M_{0,\semi}).$
But $I(P(h,\sigma)) X^{\novt+i\sigh}=0.$ So we deduce that  near $\mcl,$ 
$K(P(h,\sigma) \widetilde G)\in \mck^{\novt+i\sigh+1,-\novt-1,\novt +i\sigh}(M_{0,\semi}).$ That is, we gain one order of vanishing at the left face. Since we already know from
\eqref{weakparametrix} that the kernel of the error vanishes to infinite order $\mca$ and at the front face $\mcf,$ and $x=R \rho_R,$ $x'=R\rho_L,$
 the kernel of the error $R,$ on the manifold $\Bn\times \Bn$  satisfies
\begin{gather*}
K(R)\in h^\infty x^{\novt+i\sigh+1} {x'}^{\novt+i\sigh} \CI(\Bn\times \Bn).
\end{gather*}
Then one can use a power series argument to find $G_4$ with Schwartz kernel in
$h^\infty x^{\novt+i\sigh+1}{x'}^{\novt+i\sigh} \CI(\Bn \times \Bn)$ such that

\begin{gather*}
P(h,\sigma) G_4-R \in  \Psi^{-\infty, \infty,\infty,\novt+i\sigh}(\Bn).
\end{gather*}
So $G=G_0-G_1+G_2-G_3-G_4 \in \Psi_{0,\semi}^{-2, \novt+i\sigh,-\novt-1,\novt +i\sigh}$ is the desired parametrix.

\section{$L^2$-bounds for the semiclassical resolvent}
\label{sec:L2-bounds}

We now prove bounds for the semiclassical resolvent,
$R(h,\sigma)=P(h,\sigma)^{-1}$:

\begin{thm}\label{semiclassical-resolvent-bounds}  Let $M>0,$ $h>0$ and $\sigma\in \mc$ be such that $\frac{\im \sigma}{h}<M.$  Let $a,b\geq\max\{ 0, \frac{\im\sigma}{h} \}.$ Then there exists $h_0>0$ and $C>0$ independent of $h$ such that for $h\in (0,h_0),$ 
\begin{gather}
||x^a R(h,\sigma) x^b f||_{L^2(\Bn)} \leq C h^{-1-\frac{n}{2}}||f||_{L^2(\Bn)}. \label{eq:newbounds-res}
\end{gather}
\end{thm}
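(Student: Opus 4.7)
The plan is to combine the right parametrix $G=G(h,\sigma)$ of Theorem~\ref{nD-parametrix} with a Neumann series to invert the error. Write $P(h,\sigma) G=\Id+E$ with
\[
E\in\rho_\mcf^\infty\rho_\mcs^\infty\Psi_{0,\semi}^{-\infty,\infty,\infty,n/2+i\sigma/h}(\Bn).
\]
Since the resolvent exists a priori as a meromorphic family \cite{Mazzeo-Melrose:Meromorphic}, in the region where $\Id+E$ is invertible on a suitable weighted $L^2$ space one has $R(h,\sigma)=G(\Id+E)^{-1}$, and telescoping $x^b\cdot x^{-b}=1$ between factors gives
\[
x^a R(h,\sigma) x^b = (x^a G x^b)\bigl(\Id+x^{-b} E x^b\bigr)^{-1}.
\]
The desired estimate~\eqref{eq:newbounds-res} therefore reduces to two assertions: (i) $\|x^a G x^b\|_{L^2\to L^2}=O(h^{-1-n/2})$, and (ii) $\|x^{-b} E x^b\|_{L^2\to L^2}=O(h^\infty)$.

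For (i) I would apply Schur's test to the lift $\beta_\semi^* K_G$ on $M_{0,\semi}$, integrating against the lifted hyperbolic measure $|dg_\delta(z')|$. The kernel is $K_G=e^{-i\sigma r/h}\,U$ with $U\in\Psi_{0,\semi}^{-2,n/2,-n/2-1,n/2}$, while by Proposition~\ref{distance} the modulus of $e^{-i\sigma r/h}$ equals $(\rho_L\rho_R)^{-\im\sigma/h}e^{(\im\sigma/h) F}$. The weights $x^a,\ x^b$ lift to contribute $\rho_L^a\rho_\ff^a$ and $\rho_R^b\rho_\ff^b$, so after weighting the real exponents are $n/2+a-\im\sigma/h\ge n/2$ at $\mcl$ and $n/2+b-\im\sigma/h\ge n/2$ at $\mcr$, thanks to the hypothesis $a,b\ge\max\{0,\im\sigma/h\}$; this, combined with the smoothness at $\mcf$ and polyhomogeneity at all faces, makes the Schur integrals converge. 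The $h$-power then arises from two sources: near $\Diag_\semi$, the kernel is a standard semiclassical pseudodifferential operator of order $-2$ and contributes $O(1)$; away from $\Diag_\semi$, the dominant contribution comes from a neighborhood of $\mca$, where in the semiclassical rescaled coordinates of Section~\ref{full-sc-parametrix} the kernel is (up to smooth factors) the analytically continued Euclidean free resolvent. A plain Schur estimate on this piece produces the factor $h^{-1-n/2}$: the $h^{-1}$ is the usual semiclassical resolvent loss at real energy, and the extra $h^{-n/2}$ reflects the non-integrability in $\RR^{n+1}$ of the outgoing spherical wave $|Z|^{-n/2}e^{i\sigma|Z|}$, for which Schur sees the $L^\infty$ size rather than any cancellation.

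For (ii) the weighted kernel of $x^{-b} E x^b$ has real exponent $\infty$ at $\mcl$ (where $x^{-b}$ is absorbed by the infinite-order vanishing of $E$) and $n/2+b-\im\sigma/h\ge n/2$ at $\mcr$, while the factor $\rho_\mcs^\infty$ ensures arbitrarily large $h$-decay; a second Schur estimate then yields $\|x^{-b} E x^b\|_{L^2\to L^2}=O(h^N)$ for every $N$. Consequently, for $h\le h_0$ small the Neumann series for $(\Id+x^{-b} E x^b)^{-1}$ converges in $L^2$ with uniformly bounded inverse, and combining with (i) proves~\eqref{eq:newbounds-res}. The main obstacle will be the bookkeeping in step (i), namely tracking simultaneously the four boundary exponents on $M_{0,\semi}$, the oscillatory factor $e^{-i\sigma r/h}$, and the hyperbolic volume on both factors so that the resulting Schur integrals are uniformly $O(h^{-1-n/2})$ in $h$; separating cleanly the contribution near $\Diag_\semi$ (standard semiclassical calculus) from that near the flow-out Lagrangian at $\mca$ (Euclidean model) is essential to obtaining the stated power of $h$ rather than a worse one.
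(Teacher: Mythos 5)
Your proposal matches the paper's proof in Section~\ref{sec:L2-bounds} essentially step for step: the factorization $x^a R(h,\sigma) x^b=(x^a G x^b)\,(\Id+x^{-b}Ex^b)^{-1}$ (valid first for $\im\sigma<0$ and then by analytic continuation), Schur-type estimates on the weighted lifted kernels (the paper's Lemma~\ref{schurs} together with Proposition~\ref{l2boundrest-gen}, obtained by splitting $G=G_0+G_1+G_2$ according to distance from the lifted diagonal), and a Neumann-series inversion for $h\le h_0$ once $\|x^{-b}Ex^b\|_{L^2\to L^2}=O(h^N)$. Your attribution of the $h^{-1-n/2}$ loss to a plain Schur bound on the $\langle r/h\rangle^{-n/2}$ free-resolvent tail near the semiclassical face is precisely the computation the paper performs on the piece $G_1$ supported in $r<2$; the only detail glossed over (on both sides) is the borderline case $a=\im\sigma/h$ or $b=\im\sigma/h$, where the Schur exponent sits exactly at the critical value $n/2$ and logarithmic weights would be needed.
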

As usual, we prove Theorem \ref{semiclassical-resolvent-bounds} by obtaining bounds for the 
parametrix $G$ and its error $E$ on
weighted $L^2$ spaces. As a preliminary remark, we recall that elements of
$\Psi^0_{\semi}$ on compact manifolds without boundary are $L^2$-bounded with
an $h$-independent bound;
the same holds for elements of $\Psi^0_{0,\semi}$. Thus, the diagonal singularity
can always be ignored (though in our setting, due to negative orders of the operators
we are interested in, Schur's lemma gives this directly in any case).

The following lemma follows from the argument of Mazzeo \cite[Proof of
Theorem~3.25]{Mazzeo:Edge}, since for the $L^2$ bounds, the proof
in that paper only utilizes estimates on the Schwartz kernel, rather than its
derivatives. Alternatively, it can be proved using Schur's lemma if one
writes the Schwartz kernel relative to a b-density.

\begin{lemma}\label{schurs}
Suppose that the Schwartz kernel of $B$ (trivialized by $|dg_\delta(z')|$) satisfies 
\begin{gather*}
|B(z,z')| \leq C \rho_L^{\alpha}\rho_R^{\beta},
\end{gather*}
then we have four situations:
\begin{gather*}
\text{ If }  \alpha,\beta>n/2, \text{ then }  \|B\|_{\cL(L^2)}\leq C' C. \\
\text{  If } \alpha=n/2, \;\ \beta>n/2, \text{ then } \||\log x|^{-N} B\|_{\cL(L^2)}\leq C' C, \text{ for } N>\ha. \\
\text{ If } \alpha>n/2, \;\ \beta=n/2, \text{  then }  \|| B|\log x|^{-N}\|_{\cL(L^2)}\leq C' C, \text{ for } N>\ha. \\
\text{ If } \alpha=\beta=n/2, \text{ then} \||\log x|^{-N} B |\log x|^{-N}\|_{\cL(L^2)}\leq C' C, \;\  N>\ha.
\end{gather*}
\end{lemma}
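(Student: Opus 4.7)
The plan is to apply Schur's lemma to the kernel of $B$ after rewriting it with respect to a b-density. Writing $dg_\delta = x^{-n}\,d\mu_b$ with $d\mu_b = x^{-1}\,dx\,dy$ a b-density near the boundary, the multiplication operator $f \mapsto x^{n/2}f$ is an isometric isomorphism $L^2(\Bn, d\mu_b) \to L^2(\Bn, dg_\delta)$, so $\|B\|_{\cL(L^2(dg_\delta))}$ equals the operator norm of $\widetilde{B} = x^{-n/2}\, B\, x^{n/2}$ on $L^2(\Bn, d\mu_b)$. The Schwartz kernel of $\widetilde{B}$ with respect to $d\mu_b(z')$ is $\widetilde{K}(z,z') = x(z)^{-n/2}\, K_B(z,z')\, x(z')^{-n/2}$, and using the global factorizations $x = \rho_F \rho_L$, $x' = \rho_F \rho_R$ on $\Bn \times_0 \Bn$ (with $\rho_F$ a defining function for the front face), the hypothesis becomes
\[
|\widetilde{K}(z,z')| \leq C\, \rho_F^{-n}\, \rho_L^{\alpha - n/2}\, \rho_R^{\beta - n/2}.
\]

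Next I verify the unweighted Schur bound $\sup_z \int |\widetilde{K}(z,z')|\, d\mu_b(z') \leq C'C$ in projective coordinates. In the $s$-chart $(s, x', y, Y) = (x/x',\, x',\, y,\, (y-y')/x')$ covering a neighborhood of $\mathcal{L} \cup \ff$, for fixed $z = (x,y)$ the b-density on $z'$ computes to $x^n s^{-n-1}\,ds\,dY$ while $|\widetilde{K}| \leq C\, x^{-n} s^{\alpha + n/2}$ since $\rho_F \sim x'$ and $\rho_L \sim s$ in this chart. Their product yields an integrand of the form $s^{\alpha - n/2 - 1}\,ds\,dY$, integrable on $s \in (0,1]$ precisely when $\alpha > n/2$, with bound independent of $x$. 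The analogous computation in the $s'$-chart $(s' = x'/x)$ covering a neighborhood of $\mathcal{R} \cup \ff$ gives integrability iff $\beta > n/2$. The symmetric Schur inequality with $z'$ held fixed follows by swapping roles of $z,z'$ (the hypothesis is symmetric under the swap $\alpha \leftrightarrow \beta$). Schur's test thus gives $\|\widetilde{B}\|_{\cL(L^2(d\mu_b))} \leq C'C$, which is the first bound.

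For the three borderline cases the critical Schur integral acquires a logarithmic divergence: when $\alpha = n/2$ the $s$-integral becomes $\int_x^1 s^{-1}\,ds = -\log x$, and symmetrically at the right face when $\beta = n/2$. Inserting $\phi_N(x) = |\log x|^{-N}$ on the appropriate side(s) and applying a weighted Schur test with companion weights of the form $|\log(\cdot)|^{-N/2}$ distributed between the two inequalities, the substitution $u = \log(s/x)$ reduces each critical Schur integral to one of the form $\int_{\log 4}^{|\log x|} u^{-2N}\,du$. This closes when $N > 1/2$, which is the exact integrability threshold for $u^{-2N}\,du$ at infinity and reflects the critical exponent for $|\log x|^{-2N}$ to be locally integrable with respect to $ds/s$ near $s=0$. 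The three borderline cases — a left-only, a right-only, and a two-sided logarithmic weight — are all handled by the same argument, applied to one or both Schur inequalities as appropriate. The main technical obstacle is purely the bookkeeping in projective coordinates: one must verify that the $\rho_F^{-n}$ blow-up of $\widetilde{K}$ at the front face is exactly cancelled by the $x^n$ Jacobian factor arising in the change to projective variables, so that the threshold exponent $n/2$ aligns with the singular measure $ds/s$ at $s = 0$.
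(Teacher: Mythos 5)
Your approach --- conjugating by $x^{n/2}$ to work on $L^2(d\mu_b)$ and applying Schur's test to $\widetilde K = x^{-n/2}B(x')^{-n/2}$, with $|\widetilde K|\le C\rho_F^{-n}\rho_L^{\alpha-n/2}\rho_R^{\beta-n/2}$ coming from $x=\rho_F\rho_L$, $x'=\rho_F\rho_R$ --- is exactly the alternative the paper itself offers (its stated proof is a one-line appeal to Mazzeo, with Schur/b-density mentioned as the other route), and the $s$-chart Jacobian computation is correct. But the chart coverage is incomplete: in the $s$-chart the constraints ($s$, $x'$, $|Y|$ all bounded) only reach $z'$ with $|y-y'|\lesssim x'$, and together with the $s'$-chart this still misses both the region where $|y-y'|$ is bounded away from zero and, crucially, the corner $\mathcal L\cap\mathcal R\cap\ff$ where $x,x'\ll|y-y'|\to 0$. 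For $\alpha,\beta>n/2$ the statement is nonetheless true --- a direct scaling calculation, substituting $x'=xu$, $|y-y'|=xv$ into $\int|\widetilde K|\,d\mu_b(z')$, shows the full Schur integral scales as $x^0$ and converges iff $\alpha,\beta>n/2$ --- but the chart argument as written does not establish this.

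The borderline cases contain the real gap. As a minor point, the companion weight $|\log(\cdot)|^{-N/2}$ you name yields $\int u^{-3N/2}\,du$ (hence the threshold $N>2/3$), not $\int u^{-2N}\,du$; the latter corresponds to the weight $\phi_N=|\log(\cdot)|^{-N}$. More seriously, the region your charts omit is precisely where the extra divergence sits: near $\mathcal L\cap\mathcal R\cap\ff$ the angular integral $\int_{|y-y'|<C}\rho_F^{-n}\,dy'$ diverges logarithmically, $\approx\log\frac1{x+x'}$, and carrying a weighted Schur estimate through in original coordinates (with any weight $\phi_\gamma(x)$) one finds, for $\alpha=\beta=n/2$, a first Schur factor of size $\max\bigl(|\log x|^{\gamma-N},\,|\log x|^{2-2N}\bigr)$, forcing $N\ge1$ rather than $N>1/2$. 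A weighted Schur test with a weight depending on $x$ alone therefore cannot reach the stated threshold; your $Y$-integral over a compact set in the $s$-chart is exactly what conceals this extra $\log$. To get $N>1/2$ one needs either the finer argument in Mazzeo that the paper cites (which uses more than a pointwise kernel bound) or at minimum a treatment of the triple corner that avoids the naive Schur loss, for example by first running Schur only in the tangential variables to produce a one-dimensional reduced kernel and estimating that separately.
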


Now let $B(h,\sigma)$ have Schwartz kernel $B(z,z',\sigma,h)$ 
supported in $r>1$, and suppose that  
$$ 
B(z,z',\sigma,h)=e^{-i\sigma r/h} h^k \rho_L^{n/2+\gamma}
\rho_R^{n/2}x^a (x')^{b} H,
\ H \in L^\infty, \text{ and } \frac{\im \sigma}{h}<N.
$$ 

 Since from Proposition \ref{distance}, $r=-\log \rho_R \rho_L +F,$ $F\geq 0,$ $e^{-i \sigh r}=\rho_R^{i\sigh} \rho_L^{i\sigh} e^{-i\sigh F},$ and $\frac{\im \sigma}{h}<N,$
$|e^{-i\sigh F}|<C=C(N),$ it follows that
 
$$
|B(z,z',\sigma,h)|\leq C h^k \rho_L^{n/2+\gamma-\im\sigma/h+a}
\rho_R^{n/2-\im\sigma/h+b}R^{a+b}
$$
As an immediate consequence of Lemma \ref{schurs}, if $a+b\geq 0$, $\delta_0>0$ and
$\gamma-\im\sigma/h+a>\delta_0$ and
$-\im\sigma/h+b>\delta_0$, then
$$
\|B\|_{L^2}\leq C'C h^k.
$$
If either $\gamma-\im\sigma/h+a=0$ or $-\im\sigma/h+b=0$, we have to add  the weight $|\log x|^{-N},$ with $N>\ha.$
On the other hand, suppose now that the Schwartz kernel of $B$ is supported
in $r<2$, and (again, trivialized by $|dg_\delta(z')|$) 
$$
|B(z,z',\sigma,h)|\leq Ch^k\langle r/h\rangle^{-\ell}\langle h/r\rangle^s,\ s<n+1.
$$
Note that for fixed $z',\sigma,h$, $B$ is $L^1$ in $z$, and similarly with
$z'$ and $z$ interchanged. In fact,
since the volume form is bounded by $\tilde C r^n\,dr(z')\,d\omega'$ in $r<2$,
uniformly in $z$, Schur's lemma yields
$$
\|B\|_{L^2}\leq CC'' h^k\int_0^2 \langle h/r\rangle^s\langle r/h\rangle^{-\ell} r^{n}\,dr.
$$
But
\begin{gather*}
\int_0^2 \langle h/r\rangle^s\langle r/h\rangle^{-\ell} r^{n}\,dr\\
\leq C_0\left(\int_0^h (h/r)^s r^{n}\,dr
+\int_h^2 (r/h)^{-\ell} r^{n}\,dr\right)=C_1(h^{n+1}+h^{-\ell}),
\end{gather*}
so we deduce that
$$
\|B\|_{L^2}\leq CC' (h^{n+1+k}+h^{k-\ell}).
$$

First, if, with $n+1=3$, $G$ is the parametrix, with error $E$, constructed in
 Section~\ref{3D-parametrix}, then, writing $G=G_1+G_2$, and provided $\frac{\im\sigma}{h}<1$ and  $|\sigma|>1$ (recall that $U_1$ has a factor $\sigma^{-1}$),
with
$G_1$ supported in $r<2$, $G_2$ supported in $r>1$, then
$|G_1|\leq C h^{-2}r^{-1}$,
$|G_2|\leq C e^{\im\sigma r/h}\rho_L^{n/2}\rho_R^{n/2}$,
and thus, using Proposition \ref{distance},
\begin{gather*} 
|x^a (x')^b G_1(z,z',\sigma,h)|\leq C h^{-n-1}\langle h/r\rangle^{n-1}
\langle r/h\rangle^{-n/2},\\
|x^a (x')^b G_2(z,z',\sigma,h)|\leq C h^{-n-1}x^a (x')^b\rho_L^{n/2-\im\sigma/h}
\rho_R^{n/2-\im\sigma/h}.
\end{gather*} 
On the other hand,
\begin{gather}
|x^a (x')^b E(z,z',\sigma,h)|\leq C h x^a (x')^b\rho_L^{n/2+2-\im\sigma/h}\label{weakerror}
\rho_R^{n/2-\im\sigma/h}.
\end{gather}
Thus, we deduce the following bounds:

\begin{prop}\label{l2boundrest-3D}
Suppose $n+1=3$.
Let $G(h,\sigma)$  be the operator whose kernel is given by \eqref{parametrix},
and let $E(h,\sigma)=P(h,\sigma)G(h,\sigma)-\Id.$
Then for $|\sigma|>1,$ $\frac{\im\sigma}{h}<1,$  $a> \frac{\im \sigma}{h},$ $a\geq 0,$ 
and 
$\frac{\im\sigma}{h}< b<2- \frac{\im \sigma}{h},$ $b\geq 0$,
we have, with $C$ independent of $h$,
\begin{gather}
\begin{gathered}
||x^{a} G(h,\sigma) x^b f||_{L^2(\Bn)} \leq C h^{-1-\frac{n}{2}}  ||f||_{L^2(\Bn)} \text{ and } \\
||x^{-b} E(h,\sigma) x^b f||_{L^2(\Bn)} \leq C h  ||f||_{L^2(\Bn)}. 
\end{gathered}\label{l2bound1-3D}
\end{gather}
If either $a= \frac{\im \sigma}{h}$  or $b= \frac{\im \sigma}{h},$  or $a=b= \frac{\im \sigma}{h},$  one has to replace the factor $x^{\pm\frac{\im\sigma}{h} }$ 
in \eqref{l2bound1-3D} with $\left(x^{\frac{\im\sigma}{h}} |\log x|^{-N}\right)^{\pm 1},$ $N>\ha,$ to  obtain the $L^2$ bounds. 
\end{prop}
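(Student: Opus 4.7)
The plan is to derive the $L^2$ bounds by direct application of Lemma~\ref{schurs}, together with the variant Schur argument for kernels supported near the diagonal that was carried out in the paragraphs just above the statement, to the pointwise kernel bounds for $G_1$, $G_2$, and $E$ that have already been recorded. The preliminary observation is that on $\Bn\times_0\Bn$, $\beta^*x=R\rho_L\phi$ and $\beta^*x'=R\rho_R\psi$ with $\phi,\psi$ smooth and positive, so the weight $x^a(x')^b$ lifts to $\rho_L^a\rho_R^b$ times bounded factors (with a harmless $R^{a+b}\le 1$ at the front face), while the weight $x^{-b}(x')^b$ lifts to $\rho_L^{-b}\rho_R^b$ times bounded factors, the front-face powers cancelling since $-b+b=0$.

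Splitting $G=G_1+G_2$ as above, with $G_1$ supported in $\{r<2\}$ and $G_2$ in $\{r>1\}$, for $G_2$ one has
\[
|\beta^*(x^a(x')^bG_2)|\le Ch^{-2}\rho_L^{a+n/2-\im\sigma/h}\rho_R^{b+n/2-\im\sigma/h};
\]
the hypotheses $a,b>\im\sigma/h$ make both exponents strictly greater than $n/2$, so the first case of Lemma~\ref{schurs} yields $O(h^{-2})=O(h^{-n/2-1})$ (recall $n=2$). For $G_1$, $x$ and $x'$ are bounded and bounded below on the support, so one applies the $L^1$-integration Schur argument above: integrating the pointwise bound $h^{-n-1}\langle h/r\rangle^{n-1}\langle r/h\rangle^{-n/2}$ against $r^n\,dr$ on $(0,2)$ produces a contribution of order $h^{-n/2-1}$ as well.

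For the error the key input is the extra $R^2$ vanishing at the front face in \eqref{boundrest}, which combined with the exponential factor yields
\[
|\beta^*(x^{-b}(x')^bE)|\le Ch\,R^2\rho_L^{n/2+2-\im\sigma/h-b}\rho_R^{n/2-\im\sigma/h+b}.
\]
The first case of Lemma~\ref{schurs} requires both $n/2+2-\im\sigma/h-b>n/2$ and $n/2-\im\sigma/h+b>n/2$, i.e.\ $\im\sigma/h<b<2-\im\sigma/h$, which is exactly the hypothesis, and then gives the $O(h)$ bound. I expect this to be the main point: the width $2$ of the admissible strip for $b$ comes directly from the two orders of vanishing at the front face recorded in \eqref{eq:U_0-error-asymp}, so the proposition's range of admissible $b$ is optimal given the two-term parametrix used in dimension three.

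Finally, the endpoint cases $a=\im\sigma/h$, $b=\im\sigma/h$, or $a=b=\im\sigma/h$ correspond to one or both of the Schur weight exponents being exactly $n/2$, and are handled by the second, third, or fourth cases of Lemma~\ref{schurs}, which require inserting $|\log x|^{-N}$ with $N>1/2$ on the appropriate side to produce the modified bounds stated.
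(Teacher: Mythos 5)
Your proposal follows essentially the same route as the paper: split $G=G_1+G_2$ according to $r<2$ and $r>1$, reduce the exponential factor to boundary-defining-function powers via $r=-\log(\rho_L\rho_R)+F$, and then apply Lemma~\ref{schurs} (for the $r>1$ piece and for $E$) and the explicit $\int_0^2\langle h/r\rangle^s\langle r/h\rangle^{-\ell}r^n\,dr$ Schur integration (for the $r<2$ piece). Two small inaccuracies in the write-up, neither of which breaks the argument. First, the claim that on the support of $G_1$ ``$x$ and $x'$ are bounded and bounded below'' is false: the support $\{r<2\}$ contains a full neighborhood of $\Diag_0$ including the front face, where $x=R\rho_L$ and $x'=R\rho_R$ both tend to zero. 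What you actually need, and what suffices, is that $x,x'\le 1$ so $x^a(x')^b\le 1$ given $a,b\ge 0$. Second, the parenthetical remark that ``the width $2$ of the admissible strip for $b$ comes directly from the two orders of vanishing at the front face'' misattributes the source: the $R^2$ factor in \eqref{eq:U_0-error-asymp}/\eqref{boundrest} is simply discarded ($R\le 1$) in the Schur step, and the constraint $b<2-\im\sigma/h$ arises from the exponent $\rho_L^{n/2+2}$, i.e.\ the extra vanishing at the \emph{left} face; of course both the $R^2$ and the extra $\rho_L^2$ descend from the same $x^2$ vanishing of the metric perturbation via $x=R\rho_L$, so the underlying geometry is the one you identify. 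Finally, a notational point in your favor: the $h^{-2}$ prefactor you write for the lifted kernel of $G_2$ is the one that makes the Schur bound land on $h^{-1-n/2}$ as claimed; the $h^{-n-1}$ that appears in the paper's displayed estimate for $x^a(x')^bG_2$ is inconsistent with the conclusion for $n=2$ and appears to be a typo carried over from the general-$n$ setup.
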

In view of \eqref{weakerror} this cannot be improved using the methods of section \ref{3D-parametrix}.  To obtain bounds on any strip we need the sharper bounds on the error term given by 
Theorem \ref{nD-parametrix}. We now turn to arbitrary $n$ and use the  parametrix $G$ and error $E$,
constructed in Theorem \ref{nD-parametrix}.  Writing $G=G_0+G_1+G_2$, with $G_0\in
\Psi_{0,\semi}^{-2}$, $G_1$ supported in $r<2$, $G_2$ supported in $r>1$,
$G_1\in e^{-i\sigma r/h}\Psi_{0,\semi}^{-\infty,\infty,\frac{n}{2}-1,\infty}$,
$G_2\in e^{-i\sigma r/h}\Psi_{0,\semi}^{-\infty,\frac{n}{2}+2,-\frac{n}{2}-1,\frac{n}{2}}$, 
then for $|\sigma|>1$ and $\frac{\im\sigma}{h}<N,$ 
\begin{gather*} 
x^aG_0(z,z',\sigma,h) (x')^b\in \Psi_{0,\semi}^{-2},\\
|x^a (x')^b G_1(z,z',\sigma,h)|\leq C h^{-n-1}\langle r/h\rangle^{-n/2},\\
|x^a (x')^b G_2(z,z',\sigma,h)|\leq C h^{-n-1}x^a (x')^b\rho_L^{n/2-\im\sigma/h}
\rho_R^{n/2+\im\sigma/h}.
\end{gather*} 
On the other hand, writing
$E=E_1+E_2$, with $E_1$ supported in $r<2$, $E_2$ supported in $r>1$,
$E_1\in h^\infty\rho_{\mcf}^\infty\Psi_{0,\semi}^{-\infty,\infty,\frac{n}{2}-1,\infty}$,
$E_2\in h^\infty \rho_{\mcf}^\infty\Psi_{0,\semi}^{-\infty,\infty,-\frac{n}{2}-1,\frac{n}{2}}$,
then for any $k$ and $M$ (with $C=C(M,N)$),
\begin{gather*}
|x^{-b} (x')^b E_1(z,z',\sigma,h)|\leq C h^k,\\
|x^{-b} (x')^b E_2(z,z',\sigma,h)|\leq C h^k x^M (x')^b
\rho_R^{n/2-\im\sigma/h}.
\end{gather*}
In this case, we deduce the following bounds:

\begin{prop}\label{l2boundrest-gen}
Let $G(h,\sigma)$  be the operator whose kernel is given by \eqref{kernelofg},
and let $E(h,\sigma)=P(h,\sigma)G(h,\sigma)-\Id$.
Then for $|\sigma|>1,$ $\frac{\im\sigma}{h}<M,$ $M>0,$ $ a> \frac{\im \sigma}{h},$ $a\geq 0,$ and 
$b>\frac{\im\sigma}{h},$ $b\geq 0,$ and $N$ arbitrary, we have, with $C$ independent of $h$, 
\begin{gather}
\begin{gathered}
||x^{a} G(h,\sigma) x^b f||_{L^2(\Bn)} \leq C h^{-1-\frac{n}{2}}  ||f||_{L^2(\Bn)} \text{ and } \\
||x^{-b} E(h,\sigma) x^b f||_{L^2(\Bn)} \leq C h^N  ||f||_{L^2(\Bn)}. 
\end{gathered}\label{l2bound1-gen}
\end{gather}
If either $a= \frac{\im \sigma}{h}$  or $b= \frac{\im \sigma}{h},$  or $a=b= \frac{\im \sigma}{h},$  one has to replace the factor $x^{\pm\frac{\im\sigma}{h} }$ 
in \eqref{l2bound1-gen} with $\left(x^{\frac{\im\sigma}{h}} |\log x|^{-k}\right)^{\pm 1},$ $k>\ha,$ to  obtain the $L^2$ bounds.
\end{prop}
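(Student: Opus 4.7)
The plan is to decompose each piece appearing in the statement using the splittings $G=G_0+G_1+G_2$ and $E=E_1+E_2$ introduced just before the proposition, and to apply Schur-type estimates to each piece---Lemma \ref{schurs} for the pieces localized near the boundary of $M_{0,\semi}$, and direct integration against the Riemannian volume in polar coordinates near the diagonal. I handle $G$ first, then the error $E$, and comment on the borderline weights at the end.

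The summand $G_0\in\Psi_{0,\semi}^{-2}$ is harmless: $\Psi_{0,\semi}^0$ is uniformly $L^2$-bounded as $h\to 0$, and the weights $x^a,(x')^b$ can be absorbed on the compact support near $\Diag_\semi$, so this piece gives an $h$-independent bound. The piece $G_1$ is supported where $r<2$ and satisfies the pointwise kernel bound stated above; since the volume form is dominated by $r^n\,dr\,d\omega$ for $r<2$ uniformly in the base point, Schur's lemma bounds the $L^1$ norm of the kernel in either variable by
\[
Ch^{-n-1}\int_0^2\langle r/h\rangle^{-n/2}r^n\,dr\;\le\;Ch^{-1-n/2},
\]
exactly as in the three-dimensional calculation carried out above. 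For $G_2$, supported in $r>1$, Proposition \ref{distance} gives $r=-\log(\rho_L\rho_R)+F$ with $F$ smooth and bounded, so $|e^{-i\sigma r/h}|=(\rho_L\rho_R)^{\im\sigma/h}|e^{-i\sigma F/h}|$ and the exponential factor is uniformly bounded under $\im\sigma/h<M$. Combining this with the stated pointwise bound on $G_2$ and the relations $x=R\rho_R$, $x'=R\rho_L$, $h=\rho_A\rho_S$, the weighted kernel $x^a(x')^bG_2$ satisfies a pointwise bound of the form $Ch^{-1-n/2}\rho_L^{\alpha}\rho_R^{\beta}$ (times a uniformly bounded factor), where under the hypotheses $a,b>\im\sigma/h$ both $\alpha$ and $\beta$ are strictly greater than $n/2$; the first case of Lemma \ref{schurs} then yields $\|x^aG_2x^b\|_{L^2\to L^2}\le Ch^{-1-n/2}$.

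The error bound is easier. Since $E$ vanishes to infinite order at $\mcf$, $\mcs$, and $\mcl$ and carries an arbitrarily high power of $h$ (by Theorem \ref{nD-parametrix}), the kernel of $x^{-b}Ex^b$ satisfies, for any prescribed $N$, a pointwise bound with the factor $h^N$ and powers of $\rho_L,\rho_R$ strictly exceeding $n/2$; the negative weight $x^{-b}$ on the left is absorbed by the infinite-order vanishing of $E$ at $\mcl$. Lemma \ref{schurs} immediately gives $\|x^{-b}Ex^bf\|_{L^2}\le Ch^N\|f\|_{L^2}$.

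In the borderline case $a=\im\sigma/h$ or $b=\im\sigma/h$ (or both), one of the two exponents in the $G_2$ kernel estimate equals $n/2$ exactly, placing us in the second, third, or fourth case of Lemma \ref{schurs}; inserting $|\log x|^{-k}$ with $k>1/2$ on the appropriate side restores the hypotheses of that lemma and produces the same $h^{-1-n/2}$ bound. The main bookkeeping obstacle throughout is the $G_2$ estimate: correctly tracking the $\rho_L,\rho_R,\rho_A,\rho_S$ powers under the relations above so that the boundary exponents fed into Lemma \ref{schurs} are right. Everything else is a direct application of standard semiclassical $L^2$-boundedness or Schur's lemma.
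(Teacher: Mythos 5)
Your proposal is correct and follows essentially the same route as the paper: the same decomposition $G=G_0+G_1+G_2$ and $E=E_1+E_2$ introduced just before the proposition, the same use of Proposition~\ref{distance} to peel the exponential factor into $(\rho_L\rho_R)^{-\im\sigma/h}$ times a bounded term, and the same reduction of every piece to Lemma~\ref{schurs} (or to a one-variable Schur integral against $r^n\,dr\,d\omega$ near the diagonal, which is exactly the paper's computation). One small slip: you wrote $|e^{-i\sigma r/h}|=(\rho_L\rho_R)^{\im\sigma/h}|e^{-i\sigma F/h}|$, but the modulus of the oscillatory factor is $(\rho_L\rho_R)^{-\im\sigma/h}$; your subsequent bookkeeping (the exponents become $n/2-\im\sigma/h+a$ and $n/2-\im\sigma/h+b$, so $a,b>\im\sigma/h$ is precisely what puts you in the first case of Lemma~\ref{schurs}) is nevertheless consistent with the correct sign, so this reads as a typo rather than an error in the argument.
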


Now we can apply these estimates to prove Theorem \ref{semiclassical-resolvent-bounds}.  We know that
\begin{gather*}
P(h,\sigma) G(h,\sigma)=I+ E(h,\sigma).
\end{gather*}
Since $R(h,\sigma)$ is bounded on $L^2(\Bn)$ for $\im\sigma<0$ we can write for $\im\sigma<0,$
\begin{gather*}
G(h,\sigma)= R(h,\sigma)( I+E(h,\sigma) ).
\end{gather*}
Therefore we have, still for $\im \sigma<0,$
\begin{gather*}
x^a G(h,\sigma) x^b= x^a R(h,\sigma)x^b (I + x^{-b} E(h,\sigma) x^{-b}).
\end{gather*}
For $a,b$ and $\sigma$ as in Proposition \ref{l2boundrest-gen} we can pick $h_0$ so that
$$
||x^{-b} E(h,\sigma) x^b f||_{L^2\rightarrow L^2} \leq \ha.
$$
In this case we have
\begin{gather*}
x^a G(h,\sigma) x^b (I + x^{-b} E(h,\sigma) x^{-b})^{-1}= x^a R(h,\sigma)x^b
\end{gather*}
and the result is proved.

\section{Proof of Theorem~\ref{resolvent-bounds}}
\label{sec:resolvent-bounds}
Now we are ready to prove Theorem \ref{resolvent-bounds}.   To avoid using the same notation for different parameters, we will denote the spectral parameter in the statement of Theorem \ref{resolvent-bounds} by $\la,$ instead of $\sigma.$  
 We  write, for $|\re\la|>1,$
\begin{equation*}
(\Delta_{g_\delta}+ x^2 W-\la^2-\frac{n^2}{4})= (\re \la)^2\left[ \frac{1}{(\re\la)^2}\left(\Delta_{g_\delta}+ x^2 W-\frac{n^2}{4}\right)-\frac{\la^2}{(\re \la)^2}\right].
\end{equation*}
Thus, if we denote $h=\frac{1}{\re \la}$ and $\sigma=\frac{\la}{\re \la},$ and $E(h,\sigma)$ and $G(h,\sigma)$ are the operators in Proposition \ref{l2boundrest-gen}, 
 we have
\begin{gather*}
(\Delta_{g_\delta}+ x^2 W-\la^2-\frac{n^2}{4})G(\frac{1}{\re \la}, \frac{\la}{\re \la})=
 (\re\la)^2\left(\Id + E(\frac{1}{\re\la},\frac{\la}{\re \la})\right).
\end{gather*}
Since $R_\del(\la)=\left( \Delta_{g_\delta}+ x^2 W-\la^2-\frac{n^2}{4}\right)^{-1}$ is a well defined bounded operator if $\im \la<0,$ 
we can write, 
\begin{gather*}
G(\frac{1}{\re \la},\frac{\la}{\re \la})= (\re\la)^2 R_\delta(\la) \left(\Id +  E(\frac{1}{\re\la},\frac{\la}{\re\la})\right), \text{ for }  \im\la<0.
\end{gather*}
Therefore,
\begin{gather*}
x^aG(\frac{1}{\re \la},\frac{\la}{\re \la})x^b= (\re\la)^2 x^a R_\delta(\la) x^b\left(\Id +  x^{-b} E(\frac{1}{\re\la},\frac{\la}{\re\la}) x^{b}\right).
\end{gather*}
According to Proposition \ref{l2boundrest-gen}, if $\im\la<M,$ we can pick  $K$ such that if  $|\re \la|>K,$ and $\im \la < b$ then, 
 \begin{equation*}
 ||x^{-b} E(\frac{1}{\re \la},\frac{\la}{\re \la} )x^b||<\ha.
 \end{equation*}
 Therefore
$(\Id +  x^{-b} E(\frac{1}{\re \la},\frac{\la}{\re \la}) x^{b})^{-1}$ is holomorphic in $\im\la<M,$ and 
bounded as an operator in $L^2(\Bc, g),$ with norm independent of $\re\la,$ provided $b>\im \la.$
 On the other hand,  if $a>\im \la,$ then  from Proposition \ref{l2boundrest-gen},
\begin{gather*}
||x^{a} G(\frac{1}{\re \la},\frac{\la}{\re \la}) x^b f||_{L^2(\Bn)} \leq C (\re \la)^{1+\frac{n}{2}}  ||f||_{L^2(\Bn)}. 
\end{gather*}
Since,
\begin{gather*}
x^a R_\delta(\la) x^b =(\re \la)^{-2}x^a G(\frac{1}{\re \la},\frac{\la}{\re \la}) x^b\left( I+ x^{-b} E(\frac{1}{\re \la},\frac{\la}{\re \la}) x^{b}\right)^{-1},
\end{gather*}
then, for and for $a,b$ in this range, and $|\re \la|>K,$ $x^a R_\del(\la) x^b$ is holomorphic and 
\begin{gather*}
||x^a R_\delta(\la) x^b f||_{L^2(\Bc,g)} \leq C (\re\la)^{\frac{n}{2}-1} ||f||_{L^2(\Bc,g)}.
\end{gather*}

When either $a=\im \la$ or $b=\im \la$ we have to introduce the logarithmic weight and in Proposition \ref{l2boundrest-gen}. This
concludes the proof of the 
$L^2$ estimates of Theorem \ref{resolvent-bounds}.

The Sobolev estimates follow from these $L^2$ estimates and interpolation.
First we observe that the following commutator properties hold:  There are $\CI(\Bc)$ functions $A_i$ and $B_j,$ $i=1, 2,$ and $1\leq j \leq 5,$ such that
\begin{gather*}
[\Delta_{g_\delta},x^a]= A_1 x^a + A_2 x^a xD_x, \\
[\Delta_{g_\delta}, x^a (\log x)^{-N}]=  B_1 x^a (\log x)^{-N} + B_2 x^a (\log x)^{-N-1} + \\
B_3 x^a (\log x)^{-N-2} + \left( B_4 x^a (\log x)^{-N} + B_5 x^a (\log x)^{-N-1}\right) x D_x.
\end{gather*}
Hence
\begin{gather*}
\Delta_{g_\delta} x^a R_\delta(\sigma) x^b v= x^a \Delta_{g_\delta} R_\delta(\sigma)  x^b v+
A_1 x^a R_\delta(\sigma) x^b  v + A_2 x^a x D_x R_\delta(\sigma) x^b  v.
\end{gather*}
Since $a,b\geq 0,$ 
$\Delta_{g_\delta}$ is elliptic,  and $\Delta_{g_\delta} R_\delta(\sigma)=\Id + (\sigma^2+1-x^2 W(x) ) R_\delta$
it follows that, see for example \cite{Mazzeo:Edge},
that there exists a constant $C>0$ such that
\begin{equation}\begin{split}
&||x^a R_\delta(\sigma) x^b v||_{ H^2_0(\Bc)} \\
&\qquad\leq C \left( \sigma^2 ||x^b v||_{L^2(\Bc)} +
||x^a R_\delta(\sigma) x^b v||_{L^2(\Bc)} + ||x^a R_\delta(\sigma) x^b v||_{H^1_0(\Bc)} \right).\label{h1bound-res}
\end{split}\end{equation}
By interpolation between Sobolev spaces we know that there exists $C>0$ such that
\begin{gather}
||x^av||_{H^1_0(\Bc)}^2 \leq C ||x^a v||_{L^2(\Bc)}  ||x^a v||_{H^2_0(\Bc)}\label{interp2}
\end{gather}
Therefore, for any $\eps>0,$ 
\begin{gather}
|| x^a v ||_{H^1_0(\Bc)} \leq C\left( \eps ||x^a v ||_{H^2_0(\Bc)} + \eps^{-1} ||x^a v ||_{L^2(\Bc)}\right), \label{interp1}
\end{gather}
and if one takes $\eps$ small enough, \eqref{h1bound-res} and \eqref{interp1} give
\eqref{sobolev1} with $k=2.$   If one uses \eqref{interp2} and \eqref{sobolev1} with $k=2$ one obtains
\eqref{sobolev1} with $k=1.$  The proof of \eqref{sobolev2} follows by the same argument.

This completes the proof of Theorem~\ref{resolvent-bounds}.

\section{Structure of $\Delta_X$ near the boundaries}
\label{sec:black-hole}

We begin the proof of Theorem \ref{globalest} by analyzing the structure of
the operator $\Delta_X$ near $r=r_{\bH}$ and $r=r_{\sI}.$  We recall that
$\beta(r)=\ha \frac{d}{dr} \alpha^2(r)$ and that $\beta_\bH=\beta(r_\bH)$
and $\beta_\sI=\beta(r_\sI).$

We show that near
these ends, after rescaling $\alpha,$ the operator $\alpha^{\novt} \Delta_X
\alpha^{-\novt}$ is a small perturbation of the Laplacian of the hyperbolic
metric of constant negative sectional curvature $-\beta_{\bH}^2$ near $r_{\bH}$ and
$-\beta_{\sI}^2$ near $r_{\sI}.$

Since $\alpha'(r)\not=0$ near $r=r_{\bH}$ and $r=r_{\sI},$ $\Delta_X$ can be
written in terms of $\alpha$ as a `radial' coordinate
\begin{equation*}
\Delta_X=
\beta r^{-n}\alpha D_\alpha( \beta r^n\alpha D_\alpha)+\alpha^2 r^{-2}
\Delta_\omega.
\end{equation*}
We define a $\CI$ function $x$ on $[r_{\bH}, r_{\sI}]$ which is positive in the interior of the interval and rescales  $\alpha$ near the ends by
\begin{gather}
\alpha= 2r_{\bH}\beta_{\bH} x \text{ near } r=r_{\bH}\text{ and  }
\alpha= 2r_{\sI}|\beta_{\sI}| x \text{ near }r=r_{\sI}.
\label{rescale}
\end{gather}
Using $x$ instead of $\alpha$ near the ends $r_\bH$ and $r_\sI,$ we obtain
\begin{gather}
\begin{gathered}
\Delta_X=
\beta r^{-n} x D_{x}( \beta r^n x D_{x})+4{x}^2\beta_{\bH}^2r_{\bH}^2 r^{-2}
\Delta_\omega, \text{ near } r=r_{\bH}, \\
\Delta_X=\beta r^{-n}xD_{x}(\beta r^n xD_{x})+
4x^2\beta_{\sI}^2r_{\sI}^2 r^{-2} \Delta_\omega, \text{ near } r=r_{\sI}.
\end{gathered}\label{conjugation}
\end{gather}

\begin{prop}\label{modelnearends}    There exists $\del>0$ such that, if we  identify  each of the neighborhoods of  $\{x=0\}$   given by $r\in [r_{\bH},r_{\bH}+\del)$ and $r\in (r_{\sI}-\del, r_{\sI}],$
with a neighborhood of the boundary of the ball 
$\Bn,$ then there exist two $\CI$ functions,
$W_{\bH}(x)$ defined near $r=r_{\bH},$ and 
$W_{\sI}(x)$ defined near $r= r_{\sI},$  such that 
\begin{gather}
\begin{gathered}
\alpha^\novt \Delta_X \alpha^{-\novt}=x^\novt \Delta_X x^{-\novt} = \Delta_{g_{\bH}} + x^2 W_{\bH} -\beta_{\bH}^2\frac{n^2}{4}, 
\text{ near } r=r_{\bH} \text{ and } \\
\alpha^\novt\Delta_X \alpha^{-\novt}=x ^\novt\Delta_X {x}^{-\novt} = \Delta_{g_{\sI}} + x^2 W_{\sI} -\beta_{\sI}^2\frac{n^2}{4},  
\text{ near }  r=r_{\sI},
\end{gathered}\label{modelnearends:eq}
\end{gather}
where  $g_{\bH}$ and $g_{\sI}$ are small perturbations of the hyperbolic metrics with sectional curvature 
$-\beta_{\bH}^2$ and $-\beta_{\sI}^2$
respectively on  the interior of $\Bn,$ i.e.
\begin{gather}
\begin{gathered}
g_{\bH}=  \frac{ 4 dz^2}{ \beta_{\bH}^2(1-|z|^2)^2}+H_{\bH}, \text{ and }
g_{\sI}=  \frac{ 4 dz^2}{ \beta_{\sI}^2(1-|z|^2)^2}+H_{\sI},
\end{gathered}\label{metform}
\end{gather}
where $H_{\bH}$ and $H_{\sI}$ are symmetric 2-tensors $\CI$ up to the
boundary of $\Bn.$
\end{prop}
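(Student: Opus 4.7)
We focus on the end $r=r_\bH$; the end $r=r_\sI$ is handled identically after replacing $\beta_\bH$ with $|\beta_\sI|$ (the sign is absorbed since only $\beta_\sI^2$ appears). Since $\alpha^2(r_\bH)=0$ and $\partial_r\alpha^2|_{r=r_\bH}=2\beta_\bH\neq 0$, the defining relation $\alpha=2r_\bH\beta_\bH x$ of \eqref{rescale} gives $x^2=\alpha^2/(4r_\bH^2\beta_\bH^2)$ as a smooth function of $r$ vanishing simply at $r_\bH$. The implicit function theorem then yields $r-r_\bH=2r_\bH^2\beta_\bH x^2+O(x^4)$ as a smooth function of $x^2$, so every coefficient appearing in \eqref{conjugation} extends smoothly to $x=0$. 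We identify a collar $[0,\delta)_x\times S^n_\omega$ with a neighborhood of $\partial\Bn$ via $z=\tfrac{1-x}{1+x}\omega$, so that $x=(1-|z|)/(1+|z|)$ is the standard boundary defining function of the ball. Since $\alpha=2r_\bH\beta_\bH\,x$ near $r_\bH$, the two prefactors $\alpha^{n/2}$ and $x^{n/2}$ differ there by a nonzero constant, and the equality $\alpha^{n/2}\Delta_X\alpha^{-n/2}=x^{n/2}\Delta_X x^{-n/2}$ is automatic.

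The principal symbol of $\Delta_X$ is $\alpha^4\xi_r^2+\alpha^2 r^{-2}|\eta|^2$, corresponding to the Riemannian metric $g_X=\alpha^{-4}\,dr^2+r^2\alpha^{-2}\,d\omega^2$. Substituting the expansion of $r$ in $x$ gives
\begin{equation*}
g_X=\frac{dx^2}{\beta_\bH^2 x^2}+\frac{d\omega^2}{4\beta_\bH^2 x^2}+(\text{smooth symmetric 2-tensor on }\Bn).
\end{equation*}
A direct computation using $|z|=(1-x)/(1+x)$, hence $1-|z|^2=4x/(1+x)^2$, yields
\begin{equation*}
\frac{4\,dz^2}{\beta_\bH^2(1-|z|^2)^2}=\frac{dx^2}{\beta_\bH^2 x^2}+\frac{(1-x^2)^2\,d\omega^2}{4\beta_\bH^2 x^2},
\end{equation*}
which has the same leading part at $x=0$. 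Setting $g_\bH:=g_X$, the difference $H_\bH$ is smooth up to $\partial\Bn$, which is \eqref{metform}. For the operator identity we apply \eqref{conjugation} and the elementary conjugation rule $x^{n/2}(xD_x)^k x^{-n/2}=(xD_x+in/2)^k$; expanding gives
\begin{equation*}
x^{n/2}\Delta_X x^{-n/2}=\beta(r)^2(xD_x)^2+\beta(r)^2\bigl(in+\psi(x)\bigr)xD_x
-\beta(r)^2\frac{n^2}{4}+\tfrac{in}{2}\beta(r)^2\psi(x)+g(x)\Delta_\omega,
\end{equation*}
where $\psi(x)=xD_x\log(\beta(r(x))\,r(x)^n)$ is smooth and vanishes at $x=0$, and $g(x)=4x^2\beta_\bH^2 r_\bH^2/r(x)^2=4\beta_\bH^2 x^2+O(x^4)$. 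The constant-in-$x$ piece at $x=0$ is thus exactly $-\beta_\bH^2 n^2/4$.

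It remains to check that $x^{n/2}\Delta_X x^{-n/2}+\beta_\bH^2 n^2/4-\Delta_{g_\bH}$ is multiplication by $x^2 W_\bH$ with $W_\bH\in\CI(\Bn)$. Both operators are second order with the same principal symbol (that of $g_\bH=g_X$), so their difference is first order. A direct calculation of $\Delta_{g_\bH}$ in $(x,\omega)$-coordinates shows its leading part at $x=0$ is $\beta_\bH^2\bigl[(xD_x)^2+in\,xD_x\bigr]+4\beta_\bH^2 x^2\Delta_\omega$, which agrees precisely with the leading part of the displayed expansion above. The remaining first-order corrections in the expansion come from $\beta(r)^2\psi(x)\,xD_x$ and from the fact that $\Delta_X$ differs from the Riemannian Laplacian $\Delta_{g_X}$ by a specific first-order operator of the form $-in\alpha^2\beta\,D_r=O(x^2)\cdot D_r$; both of these carry smooth coefficients vanishing at $x=0$ and match, up to $x^2\cdot(\text{smooth})$ multiplication, the first-order commutator $[x^{n/2},\Delta_{g_\bH}]x^{-n/2}$ coming from the conjugation on the right side. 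The main technical obstacle is precisely this last bookkeeping: one must verify that, after expressing all three contributions (the smooth-in-$x$ remainders in $\beta(r)^2$, $\psi$, $g$; the correction term $\Delta_X-\Delta_{g_X}$; and the conjugation commutator for $\Delta_{g_\bH}$) in the common $(x,\omega)$-coordinates, the first-order $xD_x$ and tangential pieces cancel exactly, leaving a pure multiplication operator divisible by $x^2$. This is a routine but careful computation, made tractable by the facts that the principal symbols agree, that the constant terms at $x=0$ match, and that the correction $\Delta_X-\Delta_{g_X}$ is supported in $\alpha^2=O(x^2)$.
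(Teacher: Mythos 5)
Your first half parallels the paper's proof: you derive the conjugated-operator formula $x^{n/2}\Delta_X x^{-n/2}=\beta^2(xD_x)^2+\beta^2(in+\psi)xD_x-\beta^2\tfrac{n^2}{4}+\tfrac{in}{2}\beta^2\psi+g(x)\Delta_\omega$, which is exactly the expansion the paper writes down, and you correctly identify $g_\bH$ with the metric $g_X$ read off from the principal symbol (the paper simply writes the same metric $\tfrac{dx^2}{\beta^2 x^2}+\tfrac{r^2 d\omega^2}{\lambda_\bH^2 x^2}$ explicitly). What the paper does next, and what you do not do, is compute $\Delta_{g_\bH}$ \emph{in full} in $(x,\omega)$ coordinates, obtaining $\Delta_{g_\bH}=\beta^2(xD_x)^2+\beta^2(in+\psi)xD_x+g(x)\Delta_\omega$; subtracting this from your first display immediately shows that the difference is the multiplication operator $-\beta^2\tfrac{n^2}{4}+\tfrac{in}{2}\beta^2\psi$, and the structure result follows from $\beta=\beta_\bH+O(x^2)$ and $\psi=O(x^2)$.

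Instead of this one-line comparison, your last paragraph substitutes an indirect argument that does not close the gap. Matching principal symbols only gives that the difference is first order; matching the ``leading part at $x=0$'' does not rule out a first-order remainder of the form $x^2\cdot(\text{vector field})$, so these two observations alone do not yield a pure multiplication operator. The object you invoke, ``the first-order commutator $[x^{n/2},\Delta_{g_\bH}]x^{-n/2}$ coming from the conjugation on the right side,'' does not appear in the identity you are proving: only $\Delta_X$ is conjugated, not $\Delta_{g_\bH}$. Finally, the claim that $\Delta_X-\Delta_{g_X}=-in\alpha^2\beta D_r$ ``carries smooth coefficients vanishing at $x=0$'' is misleading and, expressed in the natural $b$-vector field, false: since $D_r=\tfrac{\beta}{\lambda_\bH^2 x}D_x$, one has $-in\alpha^2\beta D_r=-in\beta^2\,xD_x$, whose coefficient $\beta^2$ does not vanish at $x=0$. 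You explicitly concede the remaining step (``one must verify \dots this is a routine but careful computation'') without performing it. The fix is simple and is precisely what the paper does: compute $\Delta_{g_\bH}$ explicitly in $(x,\omega)$ coordinates and subtract; the $\beta^2(xD_x)^2$, $\beta^2(in+\psi)xD_x$, and $g(x)\Delta_\omega$ pieces cancel, and the rest is a multiplication by a smooth function equal to $-\beta_\bH^2\tfrac{n^2}{4}$ at $x=0$.
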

\begin{proof}    It is only necessary to prove the result near one of the ends.  The computation near the other end is identical and one only needs to replace
the index $\bH$ by $\sI.$ From \eqref{conjugation} we we find that near $r=r_{\bH},$  
\begin{equation*}\begin{split}
 x^{\novt} \Delta_X {x}^{-\novt}=  &\beta^2(x D_{x})^2 +in \beta^2 {x} D_{x}
 +\beta r^{-n} (x D_{x}(\beta r^n) )x D_{x} +(2 \beta_{\bH} r_{\bH})^2  r^{-2} x^2 \Delta_\omega \\ & -i\novt \beta r^{-n} x D_{x}(\beta r^n)-\beta^2\nsq.
\end{split}\end{equation*}

 Let $g_\bH$  be  the metric defined on a neighborhood of $\p \Bn$ given by
\begin{gather}
\begin{gathered}
g_{\bH}=  \frac{dx^2}{\beta^2 x^2} +  \la_{\bH}^{-2} {r^2}\frac{d\omega^2}{x^2}, \text{ where } \lambda_\bH= 2|\beta_\bH| r_\bH.
\end{gathered} \label{refmet}
\end{gather}
The Laplacian of this metric is
\begin{gather*}
\Delta_{g_\bH}= \beta^2(x D_x)^2 +in \beta^2 x D_x + \beta r^{-n} (x D_x(\beta r^n))x D_x + 
 \lambda_\bH^2 r^{-2} x^2\Delta_\omega.
  \end{gather*}
Therefore we conclude that near the ends $r=r_{\bH}$ 
\begin{gather*}
x^\novt \Delta_X x^{-\novt} = \Delta_{g_\bH} -\beta^2\nsq -i \novt\beta r^{-n} x D_x(\beta r^2).
\end{gather*}

Since $r=r(x^2),$ we  can write near $r_{\bH}$ and $r_{\sI},$
\begin{gather*}
r=r_\bH + x^2 A_\bH(x^2) \text{ and }
\beta(r)= \beta_\bH + x^2 B_\bH(x^2)\text{ near } r=r_\bH.
\end{gather*}
Therefore, near $r=r_{\bH}$ 
\begin{gather*}
\frac{1}{\beta^2}= \frac{1}{\beta_\bH^2} + x^2 \widetilde{B}_\bH(x^2).
\end{gather*}

We conclude that there exist a symmetric 2-tensor $H_\bH(x^2,d x,d\omega)$ near 
$r=r_\bH$  which is $\CI$ up to
$\{x=0\},$ and such that the metric $g_\bH$ given by \eqref{refmet} can be written 
near $r=r_\bH$  as
\begin{gather}
\begin{gathered}
g_\bH= \frac{d x^2}{\beta_\bH^2 x^2} + \frac{ d\omega^2}{4\beta_\bH^2 x^2} + H_\bH \text{ near } r=r_\bH.
\end{gathered} \label{modelend}
\end{gather}

Let  $\tilde{g}$ be the metric on the interior of $\Bn$ which is given by
\begin{gather*}
\tilde{g}= \frac{4 |dz|^2}{ c^2 (1-|z|^2)^2}.
\end{gather*}
We consider local coordinates valid for $|z|>0$ given by $(x,\omega),$ where $\omega=z/|z|,$ and
$x=\frac{1-|z|}{1+|z|}.$
The metric $\tilde{g}$ written in terms of these coordinates is given by
\begin{gather*}
\tilde{g}= \frac{ dx^2}{c^2 x^2} +  (1-x^2)^2 \frac{d\omega^2}{4c^2 x^2}.
\end{gather*}
Therefore,  near $x=0$
\begin{gather*}
\tilde{g}= \frac{ dx^2}{c^2 x^2} +  \frac{ d\omega^2}{4 c^2 x^2} + H(x^2,\omega,dx,d\omega),
\end{gather*}
where $H$ is a symmetric 2-tensor smooth up to the boundary of $\Bn.$
This concludes the proof of the Proposition.
\end{proof}

\section{From cut-off and models to stationary resolvent}
\label{sec:decomposition}

Next we use the method of Bruneau and Petkov \cite{Bruneau-Petkov:Semiclassical} to decompose the
operator $R(\sigma)$ in terms of its cut-off part $\chi R(\sigma) \chi$ and the contributions from the ends, which are controlled by Theorem~\ref{resolvent-bounds}.
For that one needs to define some suitable cut-off functions. For $\delta>0$  let $\chi_j,$  $\chi_j^1,$ and $\tilde{\chi}_j,$ $j=1,2,$  defined by
\begin{gather*}
\chi_1(r)= 1 \text{ if } r>r_{\bH}+4\delta, \;\  \chi_1(\alpha)=0 \text{ if } r< r_{\bH} + 3\delta, \\
\chi_1^1(r)= 1 \text{ if } r>r_{\bH}+2\delta, \;\  \chi_1^1(\alpha)=0 \text{ if } r< r_{\bH} + \delta, \\
\tilde{\chi}_1(r)= 1 \text{ if } r>r_{\bH}+6\delta, \;\  \tilde{\chi_1}(\alpha)=0 \text{ if } r< r_{\bH} +5 \delta, \\
\chi_2(r)= 1 \text{ if } r<r_{\sI}-4\delta, \;\  \chi_2(\alpha)=0 \text{ if } r> r_{\sI} - 3\delta, \\
\chi_2^1(r)= 1 \text{ if } r<r_{\sI}-2\delta, \;\  \chi_2^1(\alpha)=0 \text{ if } r> r_{\sI} - \delta, \\
\tilde{\chi}_2(r)= 1 \text{ if } r<r_{\sI}-6\delta, \;\  \tilde{\chi}_2(\alpha)=0 \text{ if } r> r_{\sI} - 5\delta,
\end{gather*}
and let
\begin{gather*}
\chi_3(r)= 1-(1-\chi_1)(1-\chi_1^1)-(1-\chi_2)(1-\chi_2^1).
\end{gather*}
$\chi_3(r)$ is supported in $[r_{\bH}+\delta, r_{\sI}-\delta]$ and $\chi_3(r)=1$ 
if $r\in[r_{\bH}+2\delta, r_{\sI}-2\delta].$ Let $\chi \in C_0^\infty(r_{\bH},r_{\sI})$ with
$\chi(r)=1$ if $r\in [r_{\bH}+\delta/2, r_{\sI}-\delta/2].$

Now we will use Proposition \ref{modelnearends} for $\delta$ small enough. Let $g_{\bH}$ and $g_{\sI}$
be the metrics given  on the interior of $\Bn$ given by \eqref{metform} and let
\begin{gather}
g_{\bH,\del}=  \frac{ 4 dz^2}{ \beta_{\bH}^2 (1-|z|^2)^2}+ (1-\tilde{\chi}_1) H_{\bH}, \text{ and }
g_{\sI,\del}=  \frac{ 4 dz^2}{ \beta_{\sI}^2 (1-|z|^2)^2}+(1-\tilde{\chi}_2)H_{\sI}. \label{g+andg++}
\end{gather}
Since $g_{\bH,\del}=g_{\bH}$ if $\tilde{\chi}_1=0,$ and  $g_{\sI,\del}=g_{\sI}$ if $\tilde{\chi}_2=0,$ it follows from Proposition \ref{modelnearends} that, for $x$ given by equation \eqref{rescale}
\begin{gather*}
\alpha^\novt \Delta_X \alpha^{-\novt} (1-{\chi}_1) f= (\Delta_{g_{\bH,\del}} +x^2 W_{\bH}-\nsq \beta_{\bH}^2-\sigma^2) (1-\chi_1) f, \text{ and } \\
\alpha^\novt \Delta_X \alpha^{-\novt} (1-\chi_1^1) f= (\Delta_{g_{\bH,\del}} +x^2 W_{\bH}-\nsq \beta_{\bH}^2-\sigma^2) (1-\chi_1^1) f. \\
\alpha^\novt \Delta_X \alpha^{-\novt} (1-\chi_2) f= (\Delta_{g_{\sI,\del}} +x^2 W_{\sI}-\nsq\beta_{\sI}^2-\sigma^2) (1-\chi_2) f, \text{ and } \\
\alpha^\novt \Delta_X \alpha^{-\novt} (1-\chi_2^1) f= (\Delta_{g_{\sI,\del}} +x^2 W_{\sI}-\nsq\beta_{\sI}^2-\sigma^2) (1-\chi_2^1) f.
\end{gather*}
Let
 \begin{gather*}
 R_\alpha(\sigma)= \alpha^\novt R(\sigma) \alpha^{-\novt}=(\alpha^\novt \Delta_X \alpha^{-\novt}-\sigma^2)^{-1}, \;\
 \im \sigma<0,
 \end{gather*}
and let 
\begin{gather*}
R_{\bH}(\sigma)=(\Delta_{g_{\bH,\del}} +x^2 W_{\bH} -\sigma^2-\nsq \beta_{\bH}^2)^{-1} \text{ and } \\
R_{\sI}(\sigma)=(\Delta_{g_{\sI,\del}} +x^2 W_{\sI} -\sigma^2-\nsq \beta_{\sI}^2)^{-1}
\end{gather*}
be operators acting on functions defined on $\Bn.$

If,  as in Proposition \ref{modelnearends}, we identify neighborhoods of $r=r_{\bH}$ and $r=r_{\sI}$ with a neighborhood of the boundary of $\Bn,$ we obtain the following identity for the resolvent
\begin{gather}
\begin{gathered}
R_\alpha(\sigma)= R_\alpha(\sigma)\chi_3+ (1-\chi_1)R_{\bH}(\sigma)(1-\chi_1^1) +
(1-\chi_2) R_{\sI}(\sigma)(1-\chi_2^1) - \\
 R_\alpha(\sigma)[\Delta_{g_{\bH}}, 1-\chi_1] R_{\bH}(\sigma)(1-\chi_1^1) -
 R_\alpha(\sigma)[\Delta_{g_{\sI}}, 1-\chi_2] R_{\sI}(\sigma)(1-\chi_2^1).
\end{gathered}\label{residentity1}
\end{gather}
Similarly, one obtains
\begin{gather}
\begin{gathered}
R_\alpha(\sigma)= \chi_3 R_\alpha(\sigma)+ (1-\chi_1^1)R_{\bH}(\sigma)(1-\chi_1) +
(1-\chi_2^1) R_{\sI}(\sigma)(1-\chi_2) + \\
 (1-\chi_1^1)R_{\bH}(\sigma)[\Delta_{g_{\bH}}, 1-\chi_1]  R_\alpha(\sigma)+
(1-\chi_2^1)R_{\sI}(\sigma)[\Delta_{g_{\sI}},1- \chi_2] R_\alpha(\sigma).
\end{gathered}\label{residentity2}
\end{gather}
These equations can be verified by applying $\alpha^\novt \Delta_X \alpha^{-\novt}-\sigma^2$ on the left and on the right of both sides of the identities.
Substituting \eqref{residentity2} into \eqref{residentity1} we obtain
\begin{equation}
\begin{split}
R_\alpha(\sigma)&= M_1(\sigma) \chi R_\alpha(\sigma) \chi M_2(\sigma) +\\
&\qquad\qquad
 (1-\chi_1)R_{\bH}(\sigma)(1-\chi_1^1)+ (1-\chi_2)R_{\sI}(\sigma)(1-\chi_2^1), \\
 &\text{ where} \\
M_1(\sigma)&= \chi_3 + (1-\chi_1^1)R_{\bH}(\sigma) (1-\tilde{\chi}_1)[\Delta_{g_{\bH}},1-\chi_1]\\
&\qquad\qquad
+ (1-\chi_2^1)R_{\sI}(\sigma)(1-\tilde{\chi}_2)[\Delta_{g_{\sI}},1-\chi_2], \\
M_2(\sigma)&= \chi_3 -[\Delta_{g_{\bH}},1-\chi_1](1-\tilde{\chi}_1)R_{\bH}(\sigma)(1-\chi_1^1)\\
&\qquad\qquad-[\Delta_{g_{\sI}},1-\chi_2](1-\tilde{\chi}_2)R_{\sI}(\sigma)(1-\chi_2^1).
\end{split}\label{brpkid}
\end{equation}

This gives a decomposition of $R_\alpha(\sigma)$ in terms of the cutoff resolvent, studied by Bony and H\"afner \cite{Bony-Haefner:Decay}, and the resolvents of he Laplacian of a metric which are small perturbations of the Poincar\'e metric in $\Bn.$ The mapping properties of such operators were established in 
Section \ref{sec:resolvent-bounds}.  Next we will put the estimates together and finish the proof of our main result.

\section{The proof of Theorem~\ref{globalest} in $3$ dimensions}
\label{sec:black-hole-proof}

We now prove Theorem \ref{globalest} for $n+1=3$ using
Theorem \ref{resolvent-bounds} and Theorem \ref{bhthm}. We first restate a strengthened version of the theorem which includes the case where the weight
$b=\im\sigma.$

\begin{thm}\label{globalest-strong} Let $\eps>0$
be such that \eqref{bonyhafnerest}
holds and suppose
$$
0<\gamma<\min(\eps,\beta_{\bH},|\beta_{\sI}|,1).
$$
Then for $b>\gamma$
there exist $C$ and $M$ such that if $\im\sigma\le\gamma$ and $|\re\sigma
|\ge1,$ 
\begin{equation}
||{\tilde{\alpha}}^b R(\sigma){\tilde{\alpha}}^bf||
_{L^2 ( X ;\Omega)}\leq C|\sigma|^M ||f||_{L^2 ( X ;\Omega) },
\label{mainest1-r}
\end{equation}
where $\tilde{\alpha}$ was defined in \eqref{SeClRe.1} and the measure $\Omega$ was defined in \eqref{measure}.
Moreover, for $N>\ha$ and $0<\del<<1,$ choose $\psi_N(r)\in\CI(r_{\bH}, r_{\sI})$ with
$\psi_N(r)\ge1,$ such that
\begin{equation}
\psi_N=|\log \alpha|^{-N}\Mif r-r_{\bH}<\del\Mor r_{\sI}-r<\del.
\label{SeClRe.4}\end{equation}
Then with $\gamma$ as above, there exists $C>0$ and $M\geq 0$ such that for
$|\re\sigma|\ge1$ and  $|\im\sigma|\le\gamma$ 
\begin{gather}
\begin{gathered}
||{\tilde{\alpha}}^{\im \sigma} \psi_N(\alpha) R(\sigma)
{\tilde{\alpha}}^{b} f||_{L^2 ( X ;\Omega )}
\leq C |\sigma|^M ||f||_{L^2 ( X ;\Omega)},
\\ 
||{\tilde{\alpha}}^{b} R(\sigma)
{\tilde{\alpha}}^{\im \sigma}\psi_N(\alpha) f||_{L^2 ( X ;\Omega)}
\leq C |\sigma|^M ||f||_{L^2 ( X ;\Omega) }\Mand
\\
||{\tilde{\alpha}}^{\im \sigma} \psi_N(\alpha) R(\sigma)
{\tilde{\alpha}}^{\im\sigma}\psi_N(\alpha) f||_{L^2 (X;\Omega)}
\leq C |\sigma|^M ||f||_{L^2 ( X ;\Omega)}.
\end{gathered}\label{mainest11}\end{gather}
\end{thm}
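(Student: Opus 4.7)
The plan is to deduce Theorem \ref{globalest-strong} from the Bruneau--Petkov style decomposition \eqref{brpkid}, combined with the interior cutoff bound of Theorem \ref{bhthm} and the hyperbolic-end bounds of Theorem \ref{resolvent-bounds}. Writing $R(\sigma) = \alpha^{-\novt} R_\alpha(\sigma)\alpha^{\novt}$ and sandwiching \eqref{brpkid} with the weights $\tilde\alpha^b$, one obtains
\begin{equation*}
\tilde\alpha^b R(\sigma)\tilde\alpha^b = T_{\text{int}}(\sigma) + T_\bH(\sigma) + T_\sI(\sigma),
\end{equation*}
where $T_{\text{int}}(\sigma) = (\tilde\alpha^b\alpha^{-\novt})M_1(\sigma)\chi R_\alpha(\sigma)\chi M_2(\sigma)(\alpha^{\novt}\tilde\alpha^b)$ carries the interior piece, $T_\bH(\sigma) = (\tilde\alpha^b\alpha^{-\novt})(1-\chi_1)R_\bH(\sigma)(1-\chi_1^1)(\alpha^{\novt}\tilde\alpha^b)$ captures the end $r_\bH$, and $T_\sI(\sigma)$ has the analogous form with $\chi_2,\chi_2^1$ and $R_\sI$. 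The task is to bound each term in $L^2(X;\Omega)$ operator norm by $C|\sigma|^M$.

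For the end terms I focus on $T_\bH$, the argument for $T_\sI$ being parallel with $\beta_\bH$ replaced by $|\beta_\sI|$. A direct calculation using $\alpha^2 = 2\beta_\bH(r-r_\bH)+O((r-r_\bH)^2)$ shows that near $r_\bH$, $\Omega \sim x^{-1}\,dx\,d\omega$ and $|dg_\bH|\sim x^{-n-1}\,dx\,d\omega$, up to smooth positive factors. Hence multiplication by $x^{\novt}$ realizes, up to a bounded multiplier, an isometry from $L^2(X;\Omega)$ to $L^2(\Bn;|dg_\bH|)$ localized near $r_\bH$. Conjugating $T_\bH$ by this isometry and using $\alpha = 2r_\bH\beta_\bH x$, $\tilde\alpha = \alpha^{1/\beta_\bH}$, both prefactors $x^{\novt}\tilde\alpha^b\alpha^{-\novt}$ and $\alpha^{\novt}\tilde\alpha^b x^{-\novt}$ reduce to $x^{b/\beta_\bH}$ times smooth bounded factors. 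Since $g_\bH$ is a perturbation of a hyperbolic metric of curvature $-\beta_\bH^2$, the rescaling $\sigma = \beta_\bH\tau$ puts $\beta_\bH^2 R_\bH(\beta_\bH\tau)$ into the form covered by Theorem \ref{resolvent-bounds} with spectral parameter $\tau$. The admissibility condition $b/\beta_\bH > \im\tau = \im\sigma/\beta_\bH$ reduces to $b > \im\sigma$, which holds because $b > \gamma \geq \im\sigma$, and thus $\|T_\bH\|_{L^2(\Omega)\to L^2(\Omega)}\leq C|\sigma|^M$ follows from \eqref{sobolev1}.

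For the interior term $T_{\text{int}}(\sigma)$, observe that $\chi,\chi_3$ and the commutators $[\Delta_{g_\bH}, 1-\chi_1], [\Delta_{g_\sI}, 1-\chi_2]$ appearing in $M_1,M_2$ are all supported in a compact subset of $(r_\bH, r_\sI)$ on which $\alpha$ and $\tilde\alpha$ are bounded above and below by positive constants. On the support of these cutoffs the weights $\tilde\alpha^b, \alpha^{\pm\novt}$ act as bounded multipliers, so $\chi R_\alpha(\sigma)\chi = \alpha^{\novt}\chi R(\sigma)\chi\alpha^{-\novt}$ inherits the bound $\|\chi R_\alpha(\sigma)\chi\|_{L^2(\Omega)\to L^2(\Omega)}\leq C|\sigma|^{M_0}$ from Theorem \ref{bhthm} (via $\gamma < \epsilon$), while the pieces of $M_1(\sigma),M_2(\sigma)$ containing $R_\bH, R_\sI$ sandwiched between compactly supported commutators are polynomially bounded by Theorem \ref{resolvent-bounds} applied with trivial weights. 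Composing the three factors yields $\|T_{\text{int}}\|\le C|\sigma|^{M_1}$, and adding the three estimates completes the proof of \eqref{mainest1-r}.

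For the refined logarithmic version \eqref{mainest11} at the borderline weight $b = \im\sigma$, the same scheme works using the operators $T_{a,b,N}(\sigma)$ of \eqref{deftabn} and the bounds \eqref{sobolev2} in place of \eqref{sobolev1}: after the $\sigma \to \sigma/\beta_\bH$ rescaling and the $x^{\novt}$ conjugation, the weight $\psi_N(\alpha)$ becomes a logarithmic factor of the form $\phi_N(x)$, up to bounded log-equivalent corrections stemming from the rescalings by $\beta_\bH, |\beta_\sI|$. The main obstacle is the bookkeeping of the $\alpha^{\pm\novt}$ conjugation together with the two distinct end-rescalings and the distinct weight exponents $\alpha^{b/\beta_\bH}$ and $\alpha^{b/|\beta_\sI|}$; the hypothesis $\gamma < \min(\beta_\bH,|\beta_\sI|,1)$ is exactly what ensures the admissibility condition $b > \im\sigma$ of Theorem \ref{resolvent-bounds} survives both rescalings, while $\gamma < \epsilon$ is needed to invoke Theorem \ref{bhthm}.
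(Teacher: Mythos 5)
Your proposal is correct and follows essentially the same route as the paper: it deduces the theorem from the Bruneau--Petkov identity \eqref{brpkid}, bounds the end terms via the rescaling $R_\bH(\sigma)=\beta_\bH^{-2}R_\delta(\sigma/|\beta_\bH|)$ (the paper's \eqref{bulletid}) combined with Theorem~\ref{resolvent-bounds} after conjugating away the measure discrepancy by $\alpha^{\novt}$, bounds the interior cutoff resolvent via Theorem~\ref{bhthm}, and treats the borderline weight by substituting the logarithmically-weighted operators $T_{a,b,N}$ and the estimates \eqref{sobolev2}. The only cosmetic difference is that you name the three pieces $T_{\text{int}},T_\bH,T_\sI$ explicitly, whereas the paper inserts $\tilde\alpha^{\pm b}$ factors directly in \eqref{brpkid} and records the component bounds as \eqref{finalestimate1}--\eqref{finalestimate3}.
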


\begin{proof}  Recall  from \eqref{g+andg++} that
\begin{gather*}
g_{\bH,\delta}= \frac{1}{\beta_\bH^2} g_\delta \text{ and } g_{\sI,\delta}= \frac{1}{\beta_\sI^2} g_\delta,
\end{gather*}
where $g_\delta$ is of the form \eqref{metgeps}.
So we obtain $\Delta_{g_{\bH,\delta}}=\beta_\bH^2 \Delta_{g_\delta}$ and similarly
$\Delta_{g_{\sI,\delta}}=\beta_\sI^2 \Delta_{g_\delta}.$  Therefore,
\begin{gather}
\begin{gathered}
R_\bH(\sigma)=\left( \beta_\bH^2\Delta_{g_\delta}+ x^2 W_\bH-\sigma^2-\nsq \beta_\bH^2\right)^{-1}= \\ \beta_\bH^{-2}\left(\Delta_{g_\delta}+ x^2 \beta_\bH^{-2} W_\bH-\sigma^2\beta_\bH^{-2}-\nsq\right)^{-1}= \beta_\bH^{-2} R_\delta( \sigma|\beta_\bH|^{-1}).
\end{gathered}\label{bulletid}
\end{gather}
Therefore, by replacing $\sigma$ with $\sigma |\beta_\bH|^{-1}$ in \eqref{sobolev1} and
 \eqref{sobolev2} setting $a=\frac{A}{|\beta_\bH|},$ and $b=\frac{B}{|\beta_\bH|},$
  we deduce from Theorem \ref{resolvent-bounds}  that  there exists $\delta_0>0$ such that if $0<\delta<\delta_0,$
 for  $\im \sigma<A$ and $\im\sigma < B$ and $|\re \sigma|> K(\delta),$
 \begin{gather}
  \begin{gathered}
 ||x^{\frac{A}{|\beta_\bH|}}  R_\bH(\sigma) x^{\frac{B}{|\beta_\bH|}} v||_{H^k(\Bc)} \leq
 C |\sigma|^k || v||_{L^2(\Bc)}, \;\ k=0,1,2, \\
  ||x^{\frac{A}{|\beta_\bH|}}  R_\bH\sigma) x^{\frac{B}{|\beta_\bH}} v||_{L^2(\Bc)} \leq
 C |\sigma|^k || v||_{H_0^{-k}(\Bc)}, \;\ k=0,1,2.
 \end{gathered}\label{sobolev3}
  \end{gather}
  Of course, the same argument applied to $R_{\sI}$ gives
   \begin{gather}
  \begin{gathered}
 ||x^{\frac{A}{|\beta_\sI|}}  R_\sI(\sigma) x^{\frac{B}{|\beta_\sI|}} v||_{H^k(\Bc)} \leq
 C |\sigma|^k || v||_{L^2(\Bc)}, \;\ k=0,1,2, \\
  ||x^{\frac{A}{|\beta_\sI|}}  R_\sI\sigma) x^{\frac{B}{|\beta_\sI}} v||_{L^2(\Bc)} \leq
 C |\sigma|^k || v||_{H_0^{-k}(\Bc)}, \;\ k=0,1,2.
 \end{gathered}\label{sobolev3sI}
  \end{gather}

 When $A={\im\sigma},$ or  $B={\im\sigma},$ 
  we define   $T_{\bH,A,B,N}$  and $T_{\sI,A,B,N}$  and
  as in \eqref{deftabn} by replacing $R_\delta(\sigma)$ with either
  $R_\bH(\sigma)$  or $R_\sI(\sigma),$ $a$ with $A$ and $b$ with $B.$  Using \eqref{sobolev2} we obtain
  for $J=\bH$ or $J=\sI,$
 \begin{gather}
  \begin{gathered}
 || T_{J,A,B,N} (\sigma)  v||_{H_0^k(\Bc)} \leq  C |\sigma|^k  ||v||_{L^2(\Bc)}, \;\ k=0,1,2, \\
|| T_{J,A,B,N} v||_{L^2(\Bc)} \leq  C |\sigma|^k  ||v||_{H_0^{-k}(\Bc)}, \;\ k=0,1,2.
\end{gathered}\label{sobolev4}
 \end{gather}

Now we recall that $\alpha=2 r_{\bH}\beta_{\bH}$ near $r_{\bH}$ and similarly
$\alpha=2 r_{\sI}\beta_{\sI}$ near $r_{\sI}.$ We will use these estimates, identity \eqref{brpkid} and Theorem \ref{bhthm} to prove
Theorem \ref{globalest}.  Indeed,  in the case $a>\im\sigma,$  $b>\im \sigma$ we write
\begin{gather*}
{\tilde{\alpha}}^a R_\alpha {\tilde{\alpha}}^b= {\tilde{\alpha}}^a M_1(\sigma) {\tilde{\alpha}}^b 
{\tilde{\alpha}}^{-b} \chi R_\alpha(\sigma) \chi {\tilde{\alpha}}^{-b} {\tilde{\alpha}}^b M_2(\sigma) 
{\tilde{\alpha}}^b 
+ \\  (1-\chi_1) {\tilde{\alpha}}^a R_{\bH}(\sigma){\tilde{\alpha}}^b (1-\chi_1^1) +(1-\chi_2) 
{\tilde{\alpha}}^aR_{\sI}(\sigma){\tilde{\alpha}}^b (1-\chi_2^1).
\end{gather*}
Notice that the measure in $\Bn$ is $x^{-n-1}dxd\omega,$ which corresponds to
$\alpha^{-n-1}d\alpha d\omega$ which in turn corresponds to $\alpha^{-n-2} dr d\omega.$   In this case $n=2,$ but this part of the argument is the same for all dimensions, and we will not set $n=2.$
Thus, we deduce from Theorem \ref{resolvent-bounds} that
\begin{gather*}
|| (1-\chi_1) {\tilde{\alpha}}^a R_{\bH}(\sigma){\tilde{\alpha}}^b (1-\chi_1^1) ||_{L^2(X;\alpha^{-n-2}  dr d\omega)} \leq C ||v||_{L^2(X;\alpha^{-n-2} dr d\omega)}, \\
||(1-\chi_2) {\tilde{\alpha}}^aR_{\sI}(\sigma){\tilde{\alpha}}^b (1-\chi_2^1) v||_{L^2(X;\alpha^{-n-2}  dr d\omega)} \leq C||v||_{L^2(X;\alpha^{-n-2}  dr d\omega)}.
\end{gather*}
Recall that $\Omega=\alpha^{-2} r^2 dr d\omega.$ Since $r\in [r_\bH,r_\sI],$ $r_\bH>0,$ this gives
\begin{gather}
\begin{gathered}
|| (1-\chi_1) {\tilde{\alpha}}^a \alpha^{-\novt}R_{\bH}(\sigma)\alpha^{\novt} {\tilde{\alpha}}^b (1-\chi_1^1) ||_{L^2(X;\Omega)} \leq C ||v||_{L^2(X;\Omega)}, \\
||(1-\chi_2) {\tilde{\alpha}}^a\alpha^{-\novt}R_{\sI}(\sigma)\alpha^\novt {\tilde{\alpha}}^b (1-\chi_2^1) v||_{L^2(X;\Omega)} \leq C||v||_{L^2(X;\alpha^{-n-2} \Omega)}.
\end{gathered}\label{finalestimate1}
\end{gather}
Similarly, using the Sobolev estimates in Theorem \ref{resolvent-bounds}, we obtain

\begin{gather}
\begin{gathered}
|| {\tilde{\alpha}}^a \alpha^{-\novt} M_1(\sigma) \alpha^\novt {\tilde{\alpha}}^b v||_{L^2(X;\Omega)} \leq
C |\sigma| ||v||_{L^2(X;\Omega)}, \\
|| {\tilde{\alpha}}^a \alpha^{-\novt} M_2(\sigma)\alpha^\novt {\tilde{\alpha}}^b v||_{L^2(X;\Omega)} \leq
C |\sigma| ||v||_{L^2(X;\Omega)}.
\end{gathered}\label{finalestimate2}
\end{gather}

Since $\chi$ is compactly supported in the interior of $X,$ it follows from Theorem \ref{bhthm} that
\begin{gather}
||{\tilde{\alpha}}^{-b} \chi R_\alpha(\sigma) \chi {\tilde{\alpha}}^{-b} v||_{L^2(X;\Omega)} \leq C||v||_{L^2(X;\Omega)}.\label{finalestimate3}
\end{gather}

Estimates \eqref{finalestimate1}, \eqref{finalestimate2} and \eqref{finalestimate3} imply that
\begin{gather*}
||{\tilde{\alpha}}^{-b} \alpha^{-\novt} R_\alpha(\sigma)\alpha^\novt \chi {\tilde{\alpha}}^{-b} v||_{L^2(X;\Omega)} \leq C||v||_{L^2(X;\Omega)}.
\end{gather*}
But $\alpha^{-\novt} R_\alpha(\sigma)\alpha^\novt=R(\sigma).$ This proves \eqref{mainest1-r}. 
\end{proof}

\section{The proof of Theorem~\ref{globalest} in general dimension}\label{sec:black-hole-n+1-proof}

We will outline the main steps necessary to connect to
the results
of \cite{Datchev-Vasy}, and refer the reader to \cite{Datchev-Vasy} for more details.
 
First choose $\delta$ so small  \eqref{bich-convexity1} holds.
Let $X_0$ and $X_1$ be as defined  in \eqref{Xdecomp},
then we recall that for $\delta$ small,
 \begin{gather*}
\alpha^{-\frac{n}{2}} P\alpha^{-\frac{n}{2}}|_{X_0}= P(h,\sigma),
\end{gather*}
where $P(h,\sigma)$ stands for model near either $r_{\bH}$ or near
$r_{\sI}.$ By
Theorem \ref{semiclassical-resolvent-bounds}
there exists $h_0>0$ such that for $h\in (0,h_0),$
\begin{gather*}
||x^a (h^2 P_0-\sigma)^{-1} x^b||_{L^2\rightarrow L^2} \leq C h^{-1-\frac{n}{2}} \;\ \sigma \in (1-c,1+c) \times (-c, \eps h)\subset\Cx.
\end{gather*}
 
Let $P_1$ be the operator defined in \eqref{operatorp1} and let  $\eps>0$ be such that \eqref{wuzwestimate} holds. Then it follows from Theorem 2.1 of
 \cite{Datchev-Vasy} that  there exist $h_1>0,$ $C>0$ and $K>0$ such that for $h\in (0,h_1),$
 \begin{equation}\begin{split}\label{eq:davaestimate}
 ||{\tilde{\alpha}}^b \alpha^{-\frac{n}{2}} (h^2 \Delta_X-\sigma^2)^{-1} &{\tilde{\alpha}}^a \alpha^{-\frac{n}{2}}||_{L^2\rightarrow L^2} \leq  C h^{-K},\\
 &\sigma \in (1-c,1+c) \times (-c, \eps h)\subset\Cx.
 \end{split}\end{equation}
 The estimate in Theorem \ref{globalest} follows by restating \eqref{eq:davaestimate}
 in the non-semiclassical language, i.e.\ multiplying it by
 $h^2$ and replacing $\sigma$ by $h^{-1}\sigma$.

\def\cprime{$'$} \def\cprime{$'$}

\end{document}